\DeclareMathOperator{\Coeff}{Coeff}
\DeclareMathOperator*{\Res}{Res}
\newcommand\note[1]{\mbox{}\marginpar{ \scriptsize\raggedright
\hspace{1pt}\color{red} #1}}
\numberwithin{equation}{section}
\numberwithin{equation}{subsection}
\theoremstyle{plain}
\newtheorem*{theorem*}{Theorem}
\newtheorem{theorem}[equation]{Theorem}
\newtheorem{lemma}[equation]{Lemma}
\newtheorem{thm}[equation]{Theorem}
\theoremstyle{definition}
\newtheorem{example}[equation]{Example}
\newtheorem{remark}[equation]{Remark}
\newtheorem{definition}[equation]{Definition}
\newcommand{\fr}{\mathfrak{r}}
\def\C{\mathbb C}
\def\Q{\mathbb Q}
\def\Z{\mathbb Z}
\def\im{{\rm Im}}
\newcommand{\calv}{{\mathcal V}}
\newcommand{\calm}{{\mathcal M}}
\newcommand{\cali}{{\mathcal I}}
\newcommand{\calO}{{\mathcal O}}
\newcommand{\calS}{{\mathcal S}}
\newcommand{\calL}{\mathcal{L}}
\newcommand{\tX}{\widetilde{X}}
\newcommand{\mfl}{\mathfrak{L}}
\newcommand{\cX}{{\mathcal X}}
\newcommand{\cO}{{\mathcal O}}
\newcommand{\bP}{{\mathbb P}}
\newcommand*{\linebundle}{\mathcal{L}}
\newcommand{\bC}{{\mathbb C}}
\newcommand{\cF}{{\mathcal F}}
\newcommand{\eca}{{\rm ECa}}
\newcommand{\pic}{{\rm Pic}}
\newcommand{\bt}{{\mathbf t}}
\newcommand{\bZ}{{\mathbb{Z}}}
\newcommand{\bQ}{{\mathbb{Q}}}
\author{J\'anos Nagy}
\address{Central European University, Dept. of Mathematics,  Budapest, Hungary}
\email{nagy\textunderscore janos@phd.ceu.edu}
\title{Class of images of Abel maps on normal surface singularities}
\begin{document}

\keywords{normal surface singularities, links of singularities,
plumbing graphs, base points , canonical line bundle, class, Abel maps}

\subjclass[2010]{Primary. 32S05, 32S25, 32S50, 57M27
Secondary. 14Bxx, 14J80, 57R57}

\begin{abstract}
In this paper we investigate Abel maps on normal surface singularities described in \cite{NNI}.
We investigate the affine version of the class of the images of Abel maps on normal surface singularities.
More precisely we consider the projective clousure of the image of an Abel map, its dual projective variety and we substract from its degree the multiplicity of the infinite hyperplane on the dual variety. In the case of generic singularities we prove explicit combinatorial formulas of this invariant, in the general case we prove an upper bound.
\end{abstract}

\maketitle

\linespread{1.2}


\pagestyle{myheadings} \markboth{{\normalsize  J. Nagy}} {{}}


\section{Introduction}

In this paper we investigate Abel maps on normal surface singularities described in \cite{NNI}, which were useful in studying invariants like multiplicity or geometric genus of generic analytic structures of normal surface singularities in \cite{NNII} and \cite{NNM}.

In \cite{NNAD} the author and A. Némethi studied the image varieties of Abel maps in the corresponding Picard groups focusing mostly on the dimension of these varieties, and computed these dimensions algorithmically from analytic invariants of the singularity, like cohomology numbers of cycles getting also explicit combinatorial formulae from the resolution graph, when the analytic type is generic.

The reason of our interest in these image varieties is that these are irreducible components of Brill-Noether stratas in the corresponding Picard groups with the value of $h^1$ equal to its
codimension (see \cite{NNAD}).

In the classical case of smooth curves the computation of dimensions of Brill Noether stratas is also a cruical problem, however the dimension of images of Abel maps is 
a special case of it and one can easily see, that if $d \geq 0$ and $C$ is a smooth curve of genus $g$, then the dimension of the Abel map $Symm^d(C) \to \pic^d(C)$ is $\min(d, g)$.
In the case of normal surface singularities these are already intresting invariants which vary if we move the analytic type of the singularity.

In this paper we investigate the affine version of the class of the images of Abel maps on normal surface singularities.
More precisely we consider the projective clousure of the image of an Abel map, its dual projective variety, and we substract from its degree the multiplicity of the infinite hyperplane on the dual variety (it is $0$ if the infinite hyperplane is not on the dual variety), we denote this invariant by $\tau$ throughout the paper.

In the case of generic singularities we prove the following main theorem (the technical condition $Z = C_{min}(Z, l')$ is explained later):

\begin{theorem*}
Let $\mathcal{T}$ be an arbitrary resolution graph and $\tX$ a generic singularity corresponding to it.
Let's have a Chern class $l' \in -S'$ and an integer effective cycle $Z \geq E$, such that $Z = C_{min}(Z, l')$, notice that this is 
a combinatorial condition computable from the resolution graph if the singularity is generic, and in particular we know that the map $ \eca^{l'}(Z) \to \im(c^{l'}(Z))$ is birational.
With these notations we have the following:

1) The dual projective variety of the projective clousure $\overline{\im(c^{l'}(Z))}$ has got dimension $h^1(\calO_Z)-1$.

2) Let's have the line bundle $\calL_Z = \calO_Z(K+Z)$ on the cycle $Z$, we have $H^0(Z, \calL_Z)_{reg} \neq \emptyset$ and it hasn't got base points at intersection points of exceptional divisors. Furthermore let's have a vertex $v \in |l'|_{*}$, so a vertex such that $(E_v, l') < 0$ , then the line bundle $\calL_Z$ hasn't got a base point on the exceptional divisor $E_v$.

3) For an arbitrary vertex $v \in \calv $ let's denote $t_v = (- Z_K+Z, E_v)$, with this notation we have got $\tau( \overline{\im(c^{l'}(Z))})= \prod_{v\in |l'|_{*}}   {t_v \choose (l', E_v)}$.
\end{theorem*}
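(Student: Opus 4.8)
The plan is to deduce all three statements from a single source of structure: Serre duality on the (non-reduced) scheme $Z$, which converts the computation of the dual variety into a concrete enumeration of effective sub-divisors of a general section of $\calL_Z$. First I would record that $\calL_Z=\calO_Z(K+Z)$ is precisely the dualizing sheaf $\omega_Z$ by adjunction, so Serre duality furnishes a canonical isomorphism $H^0(Z,\calL_Z)\cong H^1(\calO_Z)^{*}$. Since the target $\pic^{l'}(Z)$ of the Abel map is, as an affine space, modelled on $H^1(\calO_Z)$, the projective closure $\overline{\im(c^{l'}(Z))}$ lives in $\P^N$ with $N=h^1(\calO_Z)$, and a hyperplane of $\P^N$ is either the distinguished point $H_\infty$ or corresponds to a section $\omega\in H^0(\calL_Z)$.

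Next I would compute the conormal data. For a divisor $D$ with $[\calO_Z(D)]\in\im(c^{l'}(Z))$, the differential of the Abel map identifies the tangent space at $[\calO_Z(D)]$ with the image of the connecting homomorphism $H^0(\calO_Z(D)|_D)\to H^1(\calO_Z)$ attached to $0\to\calO_Z\to\calO_Z(D)\to\calO_Z(D)|_D\to 0$. Its annihilator inside $H^0(\calL_Z)=H^1(\calO_Z)^{*}$ is, again by Serre duality on $Z$, exactly $H^0(\calL_Z(-D))$, the sections of $\calL_Z$ vanishing along $D$. Hence a hyperplane $\omega$ is tangent to the image at $[\calO_Z(D)]$ precisely when $\mathrm{div}(\omega)\ge D$. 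This yields part 1: the conormal variety fibres over $\im(c^{l'}(Z))$ with fibre $\P\big(H^0(\calL_Z(-D))\big)$ of dimension $h^1(\calO_Z(D))-1$, and since $\dim\im(c^{l'}(Z))=h^1(\calO_Z)-h^1(\calO_Z(D))$ for generic $D$ (the birationality being guaranteed by $Z=C_{min}(Z,l')$), the dual has dimension $h^1(\calO_Z)-1$; the positivity and finiteness of the count established below will confirm that it is genuinely a hypersurface rather than a lower-dimensional variety.

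For part 2 I would invoke the genericity of the analytic structure together with $Z=C_{min}(Z,l')$ to establish $H^0(Z,\calL_Z)_{reg}\neq\emptyset$ and to show that a general section cuts out a reduced divisor of $t_v=\deg_{E_v}\calL_Z=(-Z_K+Z,E_v)$ distinct points on each $E_v$, all at smooth points of the exceptional divisor. The absence of base points at the intersection points, and on $E_v$ for $v\in|l'|_*$, follows from the vanishing of $h^1$ along the relevant sub-cycles, which is exactly where the generic-analytic hypothesis is used.

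Finally, for part 3 I would read off $\tau$ from a pencil of hyperplanes through $H_\infty$, that is, a family of parallel affine hyperplanes $\{\omega_0=c\}$ governed by a general direction $\omega_0\in H^0(\calL_Z)$; subtracting the multiplicity of $H_\infty$ leaves precisely the finite tangencies, which by the conormal computation are the points $[\calO_Z(D)]$ with $\mathrm{div}(\omega_0)\ge D$. By part 2 the divisor $\mathrm{div}(\omega_0)$ consists of $t_v$ general distinct points on each $E_v$, and an effective Cartier divisor $D$ of Chern class $l'$ must select $(l',E_v)$ of them on each component independently, so $\tau=\prod_{v\in|l'|_*}\binom{t_v}{(l',E_v)}$, the factors for $v\notin|l'|_*$ being trivial. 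I expect the main obstacle to be the transversality required in this final step: verifying that the general pencil meets the dual variety transversally away from $H_\infty$, so that the geometric intersection number coincides with the naive combinatorial count without hidden multiplicities, and that every combinatorial choice of points is actually realized by a genuine Cartier divisor on the thick scheme $Z$. Both facts rest essentially on the birationality supplied by $Z=C_{min}(Z,l')$ and on the generic-position statements of part 2.
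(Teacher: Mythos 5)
Your duality framework is the right skeleton, and it matches the paper's: tangency of a hyperplane $\omega\in H^0(Z,\calL_Z)=H^1(\calO_Z)^*$ at $c^{l'}(Z)(D)$ is indeed equivalent to $\omega$ having no pole along $D$, i.e.\ to a decomposition $|\omega|=D+D'$ (the paper verifies the annihilator statement by Laufer's integration formula and a residue computation rather than by quoting Serre duality for the connecting map, but that is cosmetic). The problems begin where you assert exactly what constitutes the bulk of the paper. For part 2, base-point freeness of $\calL_Z$ on $E_v$, $v\in|l'|_*$, in the case $Z_v>1$ does \emph{not} follow from ``vanishing of $h^1$ along the relevant sub-cycles'': the paper proves it by an induction on $h^1(\calO_{Z'})$ (Lemma~\ref{ind}) that requires sequential blow-ups, the machinery of relatively generic analytic structures and the relative-dominance criterion of \cite{R}, plus the base-point lemmas of \cite{H} for the $Z_v=1$ and intersection-point cases. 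Moreover, your combinatorial count in part 3 silently assumes that a tangency divisor $D$ must be assembled from the arrows of $\mathrm{div}(\omega_0)$; when $Z_u\geq 2$, ``$D$ passes through one of the $t_u$ points'' is far from sufficient — the cut must agree with the arrow of $\omega_0$ to order $Z_u$, which is the content of the paper's Lemma~\ref{cut}, proved by tracking pole orders under $Z_u-1$ successive blow-ups. So even your upper bound $\tau\leq\prod_v\binom{t_v}{(l',E_v)}$ is incomplete as written.

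The genuinely fatal gap is the equality (the lower bound) in part 3, which you defer with the remark that the needed transversality and realizability ``rest essentially on the birationality supplied by $Z=C_{min}(Z,l')$ and on the generic-position statements of part 2.'' They do not. One must show that for generic $\omega$ \emph{every} selection of $(l',E_v)$ arrows per vertex produces a divisor $D$ such that $c^{l'}(Z)(D)$ is a smooth point and regular value, equivalently that $w\mapsto \calO_Z(\sum_{u,i} D_{u,a_{u,i}}(w))$ dominates $\im(c^{l'}(Z))$. The paper's proof of this is the core of the article: arguing by contradiction inside Brill--Noether strata $W_r(Z,l'_1)$, it uses the tangent-space theorem $T_{q^*}W_{r+r'}=\ker(\im(\mu))$ for the multiplication map $\mu$ on sections, the base-point-free pencil trick to identify $\dim\ker(\mu_2)$ with $h^1(Z_{new}-A,2q^*)$, a $\chi$-minimality estimate coming from $Z=C_{min}(Z,l')$ to force $A=0$, and then monodromy transitivity arguments (one- and two-point transitivity on the arrows $D_{u,i}$) combined with the two hyperelliptic-involution theorems of \cite{H} — the statements that $h^0(Z,\calL)=1$ for $\calL\in\im(c^{-E_{u'}^*-E_{u''}^*}(Z))$ and for $\calL\in\im(c^{-2E_u^*}(Z))$ when the relevant $e_Z\geq 3$ — to rule out constant sub-line-bundles $q_M$ and hence nontrivial pencils. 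None of this is recoverable from birationality plus base-point freeness alone; since your part 1 (that the dual variety is genuinely a hypersurface) is itself deferred to the ``positivity and finiteness of the count,'' the gap propagates back through all three statements.
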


For an arbitrary singularity the situation is more complicated because although the existence of the cycle $C_{min}(Z, l')$ is ensured by \cite{NNAD} we can't even determine combinatorially for which cycles and Chern classes $Z = C_{min}(Z, l')$ holds, although we prove the inequality part of the previous theorem also in the general case:

\begin{theorem*}
Let $\mathcal{T}$ be an arbitrary resolution graph and $\tX$ a singularity corresponding to it.
Let's have a Chern class $l' \in -S'$ and an integer effective cycle $Z \geq E$, such that $Z = C_{min}(Z, l')$, in particular we know that the Abel map $ \eca^{l'}(Z) \to \im(c^{l'}(Z))$ is birational.

For an arbitrary vertex $v \in \calv $ let's denote $t_v = (- Z_K+Z, E_v)$, with this notation we have got $\tau( \overline{\im(c^{l'}(Z))}) < \prod_{v\in |l'|_{*}}   {t_v \choose (l', E_v)}$.
\end{theorem*}

In section 2) we summarise the necessary background on normal surface singularities.

In section 3) we recall the necessary definitions and results about effective Cartier divisors and Abel maps from \cite{NNI}.

In section 4) we recall our working definition about generic normal surface singularities and the main cohomological results from \cite{NNII}.

In section 5) we recall from \cite{R} the results about relatively generic analytic structures on normal surface singularities.

In section 6) we explain the invariant $\tau$ we investigate in this article and it's relation to the class of the projective clousure and the multiplicity of the infinite hyperplane in the
dual projective variety.

In section 7) we recall the necessary results from \cite{H} about base points of canonical line bundles and hyperelliptic involutions.

In section 8) we recall the structure theorems about images of Abel maps from \cite{NNAD}.

In section 9) we prove our main theorems about the $\tau$ invariant of the varieties $\overline{\im(c^{l'}(Z))}$.

\section{Preliminaries}\label{s:prel}

\subsection{The resolution}\label{ss:notation}
Let $(X,o)$ be the germ of a complex analytic normal surface singularity,
 and let us fix  a good resolution  $\phi:\widetilde{X}\to X$ of $(X,o)$.
We denote the exceptional curve $\phi^{-1}(0)$ by $E$, and let $\{E_v\}_{v\in\calv}$ be
its irreducible components. Set also $E_I:=\sum_{v\in I}E_v$ for any subset $I\subset \calv$.
For the cycle $l=\sum n_vE_v$ let its support be $|l|=\cup_{n_v\not=0}E_v$.
For more details see \cite{NCL,Nfive}.
\subsection{Topological invariants}\label{ss:topol}
Let $\Gamma$ be the dual resolution graph
associated with $\phi$;  it  is a connected graph.
Then $M:=\partial \widetilde{X}$, as a smooth oriented 3--manifold, 
 can be identified with the link of $(X,o)$, it is also
an oriented  plumbed 3--manifold associated with $\Gamma$.
{\it We will assume  (for any singularity we will deal with) that the link
 $M$ is a rational homology sphere,}
or, equivalently,  $\Gamma$ is a tree with all genus
decorations  zero. We use the same
notation $\mathcal{V}$ for the set of vertices.

The lattice $L:=H_2(\widetilde{X},\mathbb{Z})$ is  endowed
with a negative definite intersection form  $I=(\,,\,)$. It is
freely generated by the classes of 2--spheres $\{E_v\}_{v\in\mathcal{V}}$.
 The dual lattice $L':=H^2(\widetilde{X},\mathbb{Z})$ is generated
by the (anti)dual classes $\{E^*_v\}_{v\in\mathcal{V}}$ defined
by $(E^{*}_{v},E_{w})=-\delta_{vw}$, the opposite of the Kronecker symbol.
The intersection form embeds $L$ into $L'$. Then $H_1(M,\mathbb{Z})\simeq L'/L$, abridged by $H$.
Usually one also identifies $L'$ with those rational cycles $l'\in L\otimes \Q$ for which
$(l',L)\in\Z$ (or, $L'={\rm Hom}_\Z(L,\Z)\simeq H^2(\tX,\mathbb{Z})$), where the intersection form extends naturally.

All the $E_v$--coordinates of any $E^*_u$ are strict positive.
We define the Lipman cone as $\calS':=\{l'\in L'\,:\, (l', E_v)\leq 0 \ \mbox{for all $v$}\}$.
It is generated over $\bZ_{\geq 0}$ by $\{E^*_v\}_v$.
We also write $\calS:=\calS'\cap L$.

There is a natural partial ordering of $L'$ and $L$: we write $l_1'\geq l_2'$ if
$l_1'-l_2'=\sum _v r_vE_v$ with all $r_v\geq 0$. We set $L_{\geq 0}=\{l\in L\,:\, l\geq 0\}$ and
$L_{>0}=L_{\geq 0}\setminus \{0\}$.
We will write $Z_{min}\in L$ for the  {\it minimal} (or fundamental, or Artin) cycle, which is
the minimal non--zero cycle of $\calS$ \cite{Artin62,Artin66}.

We define the
  (anti)canonical cycle $Z_K\in L'$ via the {\it adjunction formulae}
$(-Z_K+E_v,E_v)+2=0$ for all $v\in \mathcal{V}$.
(In fact,  $Z_K=-c_1(\Omega^2_{\widetilde{X}})$, cf. (\ref{eq:PIC})).
In a minimal resolution $Z_K\in \calS'$.

Finally we consider the Riemann--Roch expression
 $\chi(l')=-(l',l'-Z_K)/2$ defined for any $l'\in L'$.

\subsection{Some analytic invariants}\label{ss:analinv}
{\bf The group ${\rm Pic}(\widetilde{X})$}
of  isomorphism classes of analytic line bundles on $\widetilde{X}$ appears in the (exponential) exact sequence
\begin{equation}\label{eq:PIC}
0\to {\rm Pic}^0(\widetilde{X})\to {\rm Pic}(\widetilde{X})\stackrel{c_1}
{\longrightarrow} L'\to 0, \end{equation}
where  $c_1$ denotes the first Chern class. Here
$ {\rm Pic}^0(\widetilde{X})=H^1(\widetilde{X},\calO_{\widetilde{X}})\simeq
\C^{p_g}$, where $p_g$ is the {\it geometric genus} of
$(X,o)$. $(X,o)$ is called {\it rational} if $p_g(X,o)=0$.
 Artin in \cite{Artin62,Artin66} characterized rationality topologically
via the graphs; such graphs are called `rational'. By this criterion, $\Gamma$
is rational if and only if $\chi(l)\geq 1$ for any effective non--zero cycle $l\in L_{>0}$.

The epimorphism
$c_1$ admits a unique group homomorphism section $l'\mapsto s(l')\in {\rm Pic}(\widetilde{X})$,
 which extends the natural
section $l\mapsto \calO_{\widetilde{X}}(l)$ valid for integral cycles $l\in L$, and
such that $c_1(s(l'))=l'$  \cite{OkumaRat}.
We call $s(l')$ the  {\it natural line bundles} on $\widetilde{X}$.
By  the very  definition, $\calL$ is natural if and only if some power $\calL^{\otimes n}$
of it has the form $\calO_{\tX}(l)$ for some $l\in L$.

\bekezdes $\mathbf{{Pic}(Z)}.$ \
Similarly, if $Z\in L_{>0}$ is a non--zero effective integral cycle such that its support is $|Z| =E$,
and $\calO_Z^*$ denotes
the sheaf of units of $\calO_Z$, then ${\rm Pic}(Z)=H^1(Z,\calO_Z^*)$ is  the group of isomorphism classes
of invertible sheaves on $Z$. It appears in the exact sequence
  \begin{equation}\label{eq:PICZ}
0\to {\rm Pic}^0(Z)\to {\rm Pic}(Z)\stackrel{c_1}
{\longrightarrow} L'\to 0, \end{equation}
where ${\rm Pic}^0(Z)=H^1(Z,\calO_Z)$.
If $Z_2\geq Z_1$ then there are natural restriction maps,
${\rm Pic}(\widetilde{X})\to {\rm Pic}(Z_2)\to {\rm Pic}(Z_1)$.
Similar restrictions are defined at  ${\rm Pic}^0$ level too.
These restrictions are homomorphisms of the exact sequences  (\ref{eq:PIC}) and (\ref{eq:PICZ}).

Furthermore, we define a section of (\ref{eq:PICZ}) by
$s_Z(l'):=
{\mathcal O}_{\widetilde{X}}(l')|_{Z}$, they also satisfy $c_1\circ s_Z={\rm id}_{L'}$. 
We write  ${\mathcal O}_{Z}(l')$ for $s_Z(l')$, and we call them
 {\it natural line bundles } on $Z$.

We also use the notations ${\rm Pic}^{l'}(\widetilde{X}):=c_1^{-1}(l')
\subset {\rm Pic}(\widetilde{X})$ and
${\rm Pic}^{l'}(Z):=c_1^{-1}(l')\subset{\rm Pic}(Z)$
respectively. Multiplication by $\calO_{\widetilde{X}}(-l')$, or by
$\calO_Z(-l')$, provides natural affine--space isomorphisms
${\rm Pic}^{l'}(\widetilde{X})\to {\rm Pic}^0(\widetilde{X})$ and
${\rm Pic}^{l'}(Z)\to {\rm Pic}^0(Z)$.

\bekezdes\label{bek:restrnlb} {\bf Restricted natural line bundles.}
The following warning is appropriate.
Note that if $\tX_1$ is a connected small convenient  neighbourhood
of the union of some of the exceptional divisors (hence $\tX_1$ also stays as the resolution
of the singularity obtained by contraction of that union of exceptional  curves), then one can repeat the definition of
natural line bundles at the level of $\tX_1$ as well (as a splitting of (\ref{eq:PIC}) applied for
$\tX_1$). However, the restriction to
$\tX_1$ of a natural line bundle of $\tX$ (even of type
$\calO_{\tX}(l)$ with $l$ integral cycle supported on $E$)  is usually not natural on $\tX_1$:
$\calO_{\tX}(l')|_{\tX_1}\not= \calO_{\tX_1}(R(l'))$
 (where $R:H^2(\tX,\Z)\to H^2(\tX_1,\Z)$ is the natural cohomological 
 restriction), though their Chern classes coincide.

Therefore, in inductive procedure when such restriction is needed,
 we will deal with the family of {\it restricted natural line bundles}. This means the following.
If we have two resolution spaces $\tX_1 \subset \tX$ with resolution graphs $\mathcal{T}_1 \subset \mathcal{T}$ and we have a Chern class $l' \in L'$, then we denote 
by $\calO_{\tX_1}(l') = \calO_{\tX}(l') | \tX_1$ the restriction of the natural line bundle $\calO_{\tX}(l')$.
Similarly if $Z$ is an effective integer cycle on $\tX$ with maybe $|Z| \neq E$, then we denote $\calO_{Z}(l') = \calO_{\tX}(l') | Z$.

Furthermore if $\calL$ is a line bundle on $\tX_1$, then we denote $\calL(l') = \calL \otimes \calO_{\tX}(l')$.
Similarly if $Z$ is  an effective integer cycle on $\tX$ and $\calL$ is a line bundle on $Z$, then we denote $\calL(l') = \calL \otimes \calO_Z(l')$.

\bekezdes \label{bek:ansemgr} {\bf The analytic semigroups.} \
By definition, the analytic semigroup associated with the resolution $\tX$ is

\begin{equation}\label{eq:ansemgr}
\calS'_{an}:= \{l'\in L' \,:\,\calO_{\tX}(-l')\ \mbox{has no  fixed components}\}.
\end{equation}
It is a subsemigroup of $\calS'$. One also sets $\calS_{an}:=\calS_{an}'\cap L$, a subsemigroup
of $\calS$. In fact, $\calS_{an}$
consists of the restrictions   ${\rm div}_E(f)$ of the divisors
${\rm div}(f\circ \phi)$ to $E$, where $f$ runs over $\calO_{X,o}$. Therefore, if $s_1, s_2\in \calS_{an}$, then
${\rm min}\{s_1,s_2\}\in \calS_{an}$ as well (take the generic linear combination of the corresponding functions).
In particular,  for any $l\in L$, there exists a {\it unique} minimal
$s\in \calS_{an}$ with $s\geq l$.

Similarly, for any $h\in H=L'/L$ set $\calS'_{an,h}:\{l'\in \calS_{an}\,:\, [l']=h\}$.
Then for any  $s'_1, s'_2\in \calS_{an,h}$ one has
${\rm min}\{s'_1,s'_2\}\in \calS_{an,h}$, and
for any $l'\in L'$   there exists a unique minimal
$s'\in \calS_{an,[l']}$ with $s'\geq l'$.

For any $l'\in\calS_{an}'$ there exists an ideal sheaf $\cali(l')$ with 0--dimensional support along $E$ such that
 $H^0(\tX,\calO_{\tX}(-l'))\cdot \calO_{\widetilde{X}}=\calO_{\widetilde{X}}(-l')\cdot \cali(l')$.
The ideal $\cali(l') $ describes the space of base points of the line bundle $\calO_{\tX}(-l')$.

If $l'\in\calS'_{an}$ and  the divisor of a generic global section of $\calO_{\tX}(-l')$ intersects
$E_v$, then $(l',E_v)<0$. In particular, if $p\in E_v$ is a  base point then necessarily $(l',E_v)<0$.

Choose a base point $p$ of $\calO_{\tX}(-l')$, and assume that it is a regular point of $E$, and that $\cali(l')_p$
 in the
local ring $\calO_{\tX,p}$ is $(x^t,y)$, where $x,y$ are some local coordinates at $p$  with $\{x=0\}=E$ (locally),
and $t\geq 1$.
Then we say that $p$ is a {\it $t$--simple base point}. In such cases we write
$t=t(p)$. Furthermore, $p$ is called {\it simple}
if it is $t$--simple for some $t\geq 1$.

Let's have a Chern class $l' \in S'_{an}$ and let's have a base point $p \in E_{v, reg}$ of a natural line $\calO_{\tX}(-l')$, which is simple, there is another interpretation of the positive integer $t$, such that $p$ is $t$-simple.

Let's have a generic section in $s \in H^0(\calO_{\tX}(-l'))$ and $D = |s|$, then we know, that $D$ has a cut $D'$, which is transversal at the base point $p$.

Let's blow up the exceptional divisor $E_v$ along the cut $D'$ sequentially, so let's blow up first at the point $p$ and let the new exceptional divisor be $E_{v_1}$ and let's denote
the strict transform of the cut $D'$ with the same notation.
Then let's blow up $E_{v_1}$ at the intersection point $E_{v_1} \cap D'$ and let the new exceptional divisor be $E_{v_2}$ and so on.

Let's denote the given resolution at the $i$-th step by $\tX_i$ with the blow up map $b_i : \tX_i \to \tX$ and let's look at the natural line bundle $\calL_i = \calO_{\tX_i}(-b_i^*(l') - \sum_{1 \leq j \leq i} j \cdot E_{v_j}) = \calO_{\tX_i}(D_{st})$, where $D_{st}$ is the strict transform of the divisor $D$.

Let $t$ be the minimal number, such that $\calL_t$ hasn't got a base point along the excpetional divisor $E_{v_t}$.
Equivalently $t$ is the maximal integer, such that $H^0(\tX_t, \calL_t) = H^0(\calO_{\tX_t}( - b_t^*(l')))$ and $h^1(\tX_t, \calL_t)  = h^1(\calO_{\tX}(-l')) + t$. 

In this case $p$ is a $t$-simple base point ot the natural line bundle $\calO_{\tX}(-l')$.

\section{Effective Cartier divisors and Abel maps}

  In this section we review some needed material from \cite{NNI}.

We fix a good resolution $\phi:\tX\to X$ of a normal surface singularity,
whose link is a rational homology sphere. 

\subsection{} \label{ss:4.1}
Let us fix an effective integral cycle  $Z\in L$, $Z\geq E$. (The restriction $Z\geq E$ is imposed by the
easement of the presentation, everything can be adopted  for $Z>0$).

Let $\eca(Z)$  be the space of effective Cartier (zero dimensional) divisors supported on  $Z$.
Taking the class of a Cartier divisor provides  a map
$c:\eca(Z)\to \pic(Z)$.
Let  $\eca^{l'}(Z)$ be the set of effective Cartier divisors with
Chern class $l'\in L'$, that is,
$\eca^{l'}(Z):=c^{-1}(\pic^{l'}(Z))$.

We consider the restriction of $c$, $c^{l'}:\eca^{l'}(Z)
\to \pic^{l'}(Z)$ too, sometimes still denoted by $c$. 

For any $Z_2\geq Z_1>0$ one has the natural  commutative diagram
\begin{equation}\label{eq:diagr}
\begin{picture}(200,45)(0,0)
\put(50,37){\makebox(0,0)[l]{$
\eca^{l'}(Z_2)\,\longrightarrow \, \pic^{l'}(Z_2)$}}
\put(50,8){\makebox(0,0)[l]{$
\eca^{l'}(Z_1)\,\longrightarrow \, \pic^{l'}(Z_1)$}}
\put(70,22){\makebox(0,0){$\downarrow$}}
\put(135,22){\makebox(0,0){$\downarrow$}}
\end{picture}
\end{equation}

As usual, we say that $\calL\in \pic^{l'}(Z)$ has no fixed components if
\begin{equation}\label{eq:H_0}
H^0(Z,\calL)_{reg}:=H^0(Z,\calL)\setminus \bigcup_v H^0(Z-E_v, \calL(-E_v))
\end{equation}
is non--empty.
Note that $H^0(Z,\calL)$ is a module over the algebra
$H^0(\calO_Z)$, hence one has a natural action of $H^0(\calO_Z^*)$ on
$H^0(Z, \calL)_{reg}$. This second action is algebraic and free.  Furthermore,
 $\calL\in \pic^{l'}(Z)$ is in the image of $c$ if and only if
$H^0(Z,\calL)_{reg}\not=\emptyset$. In this case, $c^{-1}(\calL)=H^0(Z,\calL)_{reg}/H^0(\calO_Z^*)$.

One verifies that $\eca^{l'}(Z)\not=\emptyset$ if and only if $-l'\in \calS'\setminus \{0\}$. Therefore, it is convenient to modify the definition of $\eca$ in the case $l'=0$: we (re)define $\eca^0(Z)=\{\emptyset\}$,
as the one--element set consisting of the `empty divisor'. We also take $c^0(\emptyset):=\calO_Z$, then we have
\begin{equation}\label{eq:empty}
\eca^{l'}(Z)\not =\emptyset \ \ \Leftrightarrow \ \ l'\in -\calS'.
\end{equation}
If $l'\in -\calS'$  then
  $\eca^{l'}(Z)$ is a smooth variety of dimension $(l',Z)$. Moreover,
if $\calL\in \im (c^{l'}(Z))$ (the image of $c^{l'}$)
then  the fiber $c^{-1}(\calL)$
 is a smooth, irreducible quasiprojective variety of  dimension
 \begin{equation}\label{eq:dimfiber}
\dim(c^{-1}(\calL))= h^0(Z,\calL)-h^0(\calO_Z)=
 (l',Z)+h^1(Z,\calL)-h^1(\calO_Z).
 \end{equation}

\bekezdes \label{bek:I}
Consider again  a Chern class $l'\in-\calS'$ as above.
The $E^*$--support $I(l')\subset \calv$ of $l'$ is defined via the identity  $l'=\sum_{v\in I(l')}a_vE^*_v$ with all
$\{a_v\}_{v\in I}$ nonzero. Its role is the following.

Besides the Abel map $c^{l'}(Z)$ one can consider its `multiples' $\{c^{nl'}(Z)\}_{n\geq 1}$ as well. It turns out
(cf. \cite[\S 6]{NNI})   that $n\mapsto \dim \im (c^{nl'}(Z))$
is a non-decreasing sequence, and   $\im (c^{nl'}(Z))$ is an affine subspace
for $n\gg 1$, whose dimension $e_Z(l')$ is independent of $n\gg 0$, and essentially it depends only
on $I(l')$.
We denote the linearisation of this affine subspace by $V_Z(I) \subset H^1(\calO_Z)$ or if the cycle $Z \gg 0$, then $ V_{\tX}(I) \subset H^1(\calO_{\tX})$.

Moreover, by \cite[Theorem 6.1.9]{NNI},
\begin{equation}\label{eq:ezl}
e_Z(l')=h^1(\calO_Z)-h^1(\calO_{Z|_{\calv\setminus I(l')}}),
\end{equation}
where $Z|_{\calv\setminus I(l')}$ is the restriction of the cycle $Z$ to its $\{E_v\}_{v\in \calv\setminus I(l')}$
coordinates.

If $Z\gg 0$ (i.e. all its $E_v$--coordinated are very large), then (\ref{eq:ezl}) reads as
\begin{equation}\label{eq:ezlb}
e_Z(l')=h^1(\calO_{\tX})-h^1(\calO_{\tX(\calv\setminus I(l'))}),
\end{equation}
where $\tX(\calv\setminus I(l'))$ is a convenient small neighbourhood of $\cup_{v\in \calv\setminus I(l')}E_v$.

Let $\Omega _{\tX}(I)$ be the subspace of $H^0(\tX\setminus E, \Omega^2_{\tX})/ H^0(\tX,\Omega_{\tX}^2)$ generated by differential forms which have no poles along $E_I\setminus \cup_{v\not\in I}E_v$.
Then, cf. \cite[\S8]{NNI},
\begin{equation}\label{eq:ezlc}
h^1(\calO_{\tX(\calv\setminus I)})=\dim \Omega_{\tX}(I).
\end{equation}

Similarly let $\Omega _{Z}(I)$ be the subspace of $H^0(\calO_{\tX}(K + Z))/ H^0(\calO_{\tX}(K))$ generated by differential forms which have no poles along $E_I\setminus \cup_{v\not\in I}E_v$.
Then, cf. \cite[\S8]{NNI},
\begin{equation}\label{eq:ezlc}
h^1(\calO_{Z_{(\calv\setminus I)}})=\dim \Omega_{Z}(I).
\end{equation}

We have also the following duality from \cite{NNI} supporting the equalities above:

\begin{theorem}\cite{NNI}\label{th:DUALVO}
Via Laufer duality one has  $V_{\tX}(I)^*=\Omega_{\tX}(I)$ and $V_{Z}(I)^*=\Omega_Z(I)$.
\end{theorem}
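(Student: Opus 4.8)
The plan is to derive the statement from Laufer duality together with a concrete description of $V_Z(I)$ as the kernel of a restriction map on first cohomology, and then to obtain the $\tX$-level assertion by passing to the stable range $Z\gg0$. First I would record Laufer duality in the form of a perfect residue pairing
$$H^1(\calO_Z)\times\Omega_Z\longrightarrow\C,$$
identifying $\Omega_Z$ with $H^0(\omega_Z)$ for the dualizing sheaf $\omega_Z=\calO_{\tX}(K+Z)|_Z$. The exact sequence $0\to\calO_{\tX}(K)\to\calO_{\tX}(K+Z)\to\omega_Z\to0$ then matches $H^0(\omega_Z)$ with the quotient $H^0(\calO_{\tX}(K+Z))/H^0(\calO_{\tX}(K))$ that defines $\Omega_Z$, so that $\Omega_Z\cong H^1(\calO_Z)^*$; the parallel discussion on $\tX$ gives $\Omega_{\tX}\cong H^1(\calO_{\tX})^*$.

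Next I would establish that $V_Z(I)=\ker\big(r_I\colon H^1(\calO_Z)\to H^1(\calO_{Z|_{\calv\setminus I}})\big)$, the kernel of restriction to the complementary subconfiguration. Every line bundle in $\im(c^{nl'}(Z))$ has the form $\calO_Z(D)$ with $D$ supported on $E_I$ (since $I=I(l')$), so its restriction to $E_{\calv\setminus I}$ does not depend on $D$; hence the linearisation $V_Z(I)$ maps to zero in $H^1(\calO_{Z|_{\calv\setminus I}})$, giving $V_Z(I)\subseteq\ker r_I$. Because $Z$ is one-dimensional the relevant $H^2$ vanishes and $r_I$ is surjective, so $\dim\ker r_I=h^1(\calO_Z)-h^1(\calO_{Z|_{\calv\setminus I}})=e_Z(l')=\dim V_Z(I)$ by (\ref{eq:ezl}), forcing equality.

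Then I would dualise. Under Laufer duality the transpose of $r_I$ is the natural map $r_I^*\colon\Omega_{Z|_{\calv\setminus I}}\hookrightarrow\Omega_Z$ sending a $2$-form on the complementary configuration to its extension that is holomorphic along $E_I\setminus\cup_{v\notin I}E_v$; by definition its image is exactly $\Omega_Z(I)$. Since the annihilator of a kernel is the image of the transpose, the pairing identifies $\Omega_Z(I)=\im r_I^*$ as the exact annihilator of $V_Z(I)=\ker r_I$, which is the content of $V_Z(I)^*=\Omega_Z(I)$; the dimensions are consistent by (\ref{eq:ezlc}). The $\tX$-statement follows by taking $Z\gg0$: in the stable range $V_Z(I)$, $\Omega_Z(I)$, the restriction maps and the residue pairings are all compatible with the truncations $\tX\to Z$, so the identity passes to $V_{\tX}(I)$ and $\Omega_{\tX}(I)$.

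The hard part will be this last dualisation, namely the functoriality of Laufer's residue pairing under restriction to a subconfiguration and the precise identification of $\im r_I^*$ with $\Omega_Z(I)$. This demands a local residue computation at the intersection points $E_v\cap E_w$ with $v\in I$ and $w\notin I$, to verify that extending a form holomorphically across the interior of $E_I$ is genuinely transpose to $r_I$ and that no forms are created or lost in the process; in particular the compatibility of the dualizing-sheaf sequence with the restriction $\calO_Z\to\calO_{Z|_{\calv\setminus I}}$ must be tracked carefully at those points.
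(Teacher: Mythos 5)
You are proving a statement this paper only recalls: Theorem \ref{th:DUALVO} is quoted from \cite{NNI} without proof, and in \cite{NNI} it is obtained by computing the image of the tangent map of the Abel map via Laufer's integration formula --- essentially the residue computation that Section 9 of this paper reproduces --- and identifying the annihilator of those tangent directions with the forms having no poles along $E_I\setminus\cup_{v\notin I}E_v$. Your route is genuinely different and, as far as I can check, correct and completable. The identification $V_Z(I)=\ker\bigl(r_I\colon H^1(\calO_Z)\to H^1(\calO_{Z|_{\calv\setminus I}})\bigr)$ is sound: a divisor of Chern class $nl'$ has degree zero on every $E_w$ with $w\notin I$, so by effectivity it is disjoint from the support of $Z|_{\calv\setminus I}$ and the restricted bundle is trivial, giving $V_Z(I)\subseteq\ker r_I$; surjectivity of $r_I$ (vanishing of $H^2$ on a one-dimensional space) together with (\ref{eq:ezl}) forces equality. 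Moreover, the step you flag as ``the hard part'' is easier than you fear, and no local residue analysis at the points $E_v\cap E_w$ is needed: a representative $\omega\in H^0(\calO_{\tX}(K+Z))$ has pole divisor bounded by $Z$, and ``no pole along $E_I\setminus\cup_{v\notin I}E_v$'' is a condition at the \emph{generic} points of the $E_v$, $v\in I$, so it deletes the entire $E_I$-part of the pole divisor. Hence $\Omega_Z(I)$ equals $H^0(\calO_{\tX}(K+Z'))/H^0(\calO_{\tX}(K))$ on the nose, with $Z'=Z|_{\calv\setminus I}$; this quotient is $H^0(\omega_{Z'})$ by Grauert--Riemenschneider vanishing, and the transpose of $r_I$ is induced by $\omega_{Z'}\hookrightarrow\omega_Z$ by functoriality of Serre--Grothendieck duality. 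What your route buys is a self-contained, sheaf-theoretic proof; what the integration-formula route buys is the explicit pairing on representatives, which this paper genuinely needs later (the vanishing of a form on $\im(T_D(c^{l'}(Z)))$ in the proof of the main theorem is verified by exactly that computation).

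Two points to repair in the write-up. First, make explicit that you read the theorem's equality as ``$\Omega_Z(I)$ is the Laufer-annihilator of $V_Z(I)$'' (equivalently $V_Z(I)^*\cong \Omega_Z/\Omega_Z(I)$ canonically); this is the only reading compatible with the dimension identities quoted in the paper, since $\dim V_Z(I)+\dim\Omega_Z(I)=h^1(\calO_Z)$, and it is indeed what your argument proves. Second, do not invoke (\ref{eq:ezlc}) for the dimension of $\Omega_Z(I)$: in \cite{NNI} that identity is proved in the same circle of ideas as the duality itself, so quoting it risks circularity. Fortunately your argument re-proves it, because $\ker r_I$ has dimension $h^1(\calO_Z)-h^1(\calO_{Z'})$ while $\dim H^0(\omega_{Z'})=h^1(\calO_{Z'})$ by duality on $Z'$; so drop the citation and keep the proof self-contained. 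With these adjustments, including the stabilization $Z\gg 0$ for the $\tX$-level statement, your proposal stands.
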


\section{Analytic invariants of generic analytic type}\label{s:AnGen}

For a precise working definition of a generic analytic type see \cite{NNII}, \cite{NNM}, \cite{R}, in a slightly simplified language we can regard  the generic analytic structure in the following way as well.
Fix a graph $\Gamma$. For each $E_v$ ($v\in\calv$) the disc bundle with Euler number $E_v^2$ is taut:
it has no analytic moduli. The generic $\tX$ is obtained by gluing `generically' these bundles according to the edges of $\Gamma$ (as an analytic plumbing).

\subsection{Review of some results of \cite{NNII}}\label{ss:ReviewNNII}

 The list of analytic invariants, associated with a generic analytic type
  (with respect to a fixed resolution graph),
 which in \cite{NNII} are described topologically include  the following ones:
 $h^1(\calO_Z)$, $h^1(\calO_Z(l'))$ (with certain restriction on the Chern class $l'$),
  --- this last one applied  for $Z\gg 0$ provides  $h^1(\calO_{\tX})$
 and  $h^1(\calO_{\tX}(l')$) too ---,
the analytic semigroup, 
and the maximal ideal cycle of $\tX$.
See above or \cite{CDGPs,CDGEq,Lipman,Nfive,NPS,NCL,Ok,MR}
for the definitions and relationships between them.
The topological characterizations use the Riemann--Roch expression $\chi:L'\to \bQ$.

In the next theorem  the bundles $\calO_{\tX}(-l')$ are the `restricted natural line bundles'
associated with some pair $\tX\subset \tX_{top}$. In particular, it is valid even if $\tX_{top}=\tX$ and the bundles are natural line bundles.
The theorem (and basically several statements
 regarding generic analytic structure and restricted natural line bundles) says that these bundles behave cohomologically
as the generic line bundles in ${\rm Pic}^{-l'}(\tX)$ (for more comments  see \cite{NNII}).

\begin{theorem}\cite[Theorem A]{NNII}\label{th:OLD}
 Fix a resolution graph $\mathcal{T}$ (tree of $\bP^1$'s) and let's have a generic analytic type $\tX$ corresponding to it. Then
the following identities hold:\\
(a) For any effective cycle $Z\in L_{>0}$, such that the support $|Z|$ is connected, we have
\begin{equation*}
h^1(\calO_Z) = 1-\min_{0< l \leq Z,l\in L}\{\chi(l)\}.
\end{equation*}
(b) If $l'=\sum_{v\in \calv}l'_vE_v \in L'$ satisfies
$l'_v >0$ for any $E_v$ in the support of $Z$ then
\begin{equation*}
h^1(Z,\calO_Z(-l'))=\chi(l')-\min _{0\leq l\leq Z, l\in L}\, \{\chi(l'+l)\}.
\end{equation*}
(c) If $p_g(X,o)=h^1(\tX,\calO_{\tX})$ is the geometric genus of $(X,o)$ then
\begin{equation*}
p_g(X,o)= 1-\min_{l\in L_{>0}}\{\chi(l)\} =-\min_{l\in L}\{\chi(l)\}+\begin{cases}
1 & \mbox{if $(X,o)$ is not rational}, \\
0 & \mbox{else}.
\end{cases}
\end{equation*}
(d) More generally, for any $l'\in L'$
\begin{equation*}
h^1(\tX,\calO_{\tX}(-l'))=\chi(l')-\min _{l\in L_{\geq 0}}\, \{\chi(l'+l)\}+
\begin{cases}
1 & \mbox{if \ $l'\in L_{\leq 0}$ and $(X,o)$ is not rational}, \\
0 & \mbox{else}.
\end{cases}
\end{equation*}
(e) For $l\in L$ set $\mathfrak{h}(l)=\dim ( H^0(\tX, \calO_{\tX})/ H^0(\tX, \calO_{\tX}(-l)))$.
Then $\mathfrak{h}(0)=0$ and for $l_0>0$ one has
\begin{equation*}
\mathfrak{h}(l_0)=\min_{l\in L_{\geq 0}} \{\chi(l_0+l)\}-\min_{l\in L_{\geq 0}} \{\chi(l)\}+
\begin{cases}
1 & \mbox{if $(X,o)$ is not rational}, \\
0 & \mbox{else}.
\end{cases}
\end{equation*}
(f) \ 
$\calS'_{an}= \{l'\,:\, \chi(l')<
\chi(l' +l) \ \mbox{for any $l\in L_{>0}$}\}\cup\{0\}$.\\
(g)
 Assume that $\Gamma$ is a non--rational graph  and set
$\calm=\{ Z\in L_{>0}\,:\, \chi(Z)=\min _{l\in L}\chi(l)\}$.
Then 
the unique maximal element of $\calm$ is the maximal ideal cycle of\, $\tX$.

(Note that in the above formulae one also has $\min _{l\in L_{\geq 0}}\{\chi(l)\}=\min _{l\in L}\{\chi(l)\}$.)
\end{theorem}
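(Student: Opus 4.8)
The plan is to deduce all seven items from a single master computation, namely the value of $h^1(Z,\calO_Z(-l'))$ in part (b), and then harvest the rest by standard manipulations. Parts (a) and (c) are the structure-sheaf cases obtained by letting $l'\to 0$ (part (a) must be treated separately from (b), since (b) requires $l'_v>0$ on $|Z|$, and this is exactly where the rationality correction $+1$ is born, via the constant global section of $\calO_{\tX}$); part (c) is (a) for $Z\gg 0$ using $h^1(\calO_{\tX})=\lim_{Z}h^1(\calO_Z)$; part (d) extends (b) from a cycle to $\tX$ through the tower $\calO_{\tX}\to\calO_{Z_2}\to\calO_{Z_1}$; and parts (e), (f), (g) are read off from the behaviour of $h^0$ forced by (b)--(d). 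So I would concentrate almost all effort on (b), working with a \emph{computation sequence} $0=Z_0<Z_1<\cdots<Z_k=Z$, where $Z_{i+1}=Z_i+E_{v_i}$, and the short exact sequences
\begin{equation*}
0\to \calO_{E_{v_i}}(-l'-Z_i)\to \calO_{Z_{i+1}}(-l')\to \calO_{Z_i}(-l')\to 0 ,
\end{equation*}
whose middle term is a line bundle on the $\bP^1$ given by $E_{v_i}$ of degree $d_i=(-l'-Z_i,E_{v_i})$.

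The first step is the inequality $h^1(Z,\calO_Z(-l'))\ge \chi(l')-\min_{0\le l\le Z}\chi(l'+l)$, valid for \emph{every} analytic structure; this is the semicontinuity side, since the generic value of $h^1$ is the minimum over the family and must therefore be the smallest quantity the topology permits. Concretely, running the long exact cohomology sequence of the displayed sequences and using the additivity of $\chi$ (which is purely topological, $\chi(\calO_{Z_{i+1}}(-l'))=\chi(\calO_{Z_i}(-l'))+d_i+1$), one sees that $h^1$ can drop, as we pass from $Z_i$ to $Z_{i+1}$, by at most the rank of the connecting homomorphism $\partial_i\colon H^0(\calO_{Z_i}(-l'))\to H^1(E_{v_i},\calO_{E_{v_i}}(-l'-Z_i))$. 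Summing these local Laufer-type estimates along the sequence and optimising over the intermediate cycles produces the cycle $l^\ast$ realising $\min_{0\le l\le Z}\chi(l'+l)$, and the bound follows.

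The second, and decisive, step is to show that for the \emph{generic} analytic structure equality holds, i.e. $h^1(Z,\calO_Z(-l'))\le \chi(l')-\min_{0\le l\le Z}\chi(l'+l)$. This is the only place where genericity enters, and it amounts to proving that along the whole computation sequence each connecting map $\partial_i$ above has \emph{maximal} rank, so that the forced drop of $h^1$ is as large as possible at every stage. I expect this to be the main obstacle. The cleanest route is to dualise via Laufer/Serre duality, reinterpreting the surviving part of $h^1$ as a space of global $2$-forms with prescribed pole orders along the $E_v$; a form contributes only when the plumbing gluing is special, so the statement becomes: for the generic gluing of the taut disc bundles no unexpected form survives, equivalently the relevant evaluation/gluing linear maps are simultaneously transversal. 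Establishing this requires a careful dimension count in the moduli of plumbings and an induction that keeps the maximality of $\partial_i$ stable as components are added, which is the genuinely hard and analytic heart of the argument; the combinatorial identification of the resulting quantity with $\chi(l')-\min\chi(l'+l)$ is then a convexity/monotonicity property of $\chi$ on the Lipman cone $\calS'$.

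Finally I would derive (e)--(g) from the master formula. For (e), the quantity $\mathfrak{h}(l_0)=\dim\bigl(H^0(\calO_{\tX})/H^0(\calO_{\tX}(-l_0))\bigr)$ is computed by comparing the $h^0$'s supplied by (d) for Chern classes $0$ and $l_0$, the rationality correction again coming from the constants. Statement (f) follows because $l'\in\calS'_{an}$ means $\calO_{\tX}(-l')$ has no fixed component, which by the $h^0$-version of (d) is equivalent to $h^0(\calO_{\tX}(-l'))$ strictly exceeding $h^0(\calO_{\tX}(-l'-l))$ for all $l>0$, and this strict jump translates exactly into the strict inequality $\chi(l')<\chi(l'+l)$. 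For (g), the maximal ideal cycle is the divisorial part common to generic functions; using (e) to detect when adjoining a component $E_v$ fails to impose an independent condition pins this cycle to the locus where $\chi$ attains its global minimum, and a monotonicity argument shows the set $\calm=\{Z>0:\chi(Z)=\min_L\chi\}$ has a unique maximal element, which is the maximal ideal cycle.
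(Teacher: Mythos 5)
First, a point of context: this theorem is not proved in the present paper at all --- it is recalled verbatim from \cite{NNII} (Theorem A there), so the only thing one can compare your attempt against is the strategy of the cited source, part of whose machinery is in fact reproduced in Section 5 of this paper (the relative dominance results of Theorem \ref{relgen} and Theorems \ref{th:dominantrel}, \ref{th:hegy2rel}).

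Measured against that, your proposal has a genuine gap, and it sits exactly where you flag it yourself. Your Step 1 (the lower bound for every analytic structure) is fine: it is equivalent to the statement that the natural bundle $\calO_Z(-l')$ cannot beat the generic bundle of $\pic^{-l'}(Z)$, whose $h^1$ is computed by the formula of \cite{NNI}, and your connecting-homomorphism bookkeeping along a computation sequence is a legitimate way to re-derive that. But Step 2 --- that for the generic gluing every connecting map $\partial_i$ has maximal rank, equivalently that no unexpected differential form survives --- is the entire theorem, and you do not prove it; you write that you ``expect'' it and that it ``requires a careful dimension count in the moduli of plumbings.'' That dimension count is precisely what \cite{NNII} and \cite{R} formalize, and they do it by a different mechanism than stepwise maximality of the $\partial_i$: sections of the natural bundle are represented by effective Cartier divisors, the question is converted into dominance of the restricted Abel map $\eca^{l',\mfl}(Z)\to r^{-1}(\mfl)$ over a fixed part $\tX_1$ of the plumbing, and the genericity of the gluing enters through an induction on subgraphs in which one moves the cuts $D_{v_2}$ and uses the numerical dominance criterion $\chi(-l')-h^1(Z_1,\mfl)<\chi(-l'+l)-h^1((Z-l)_1,\mfl(-l))$. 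A subtle obstruction to your more naive plan is that the natural line bundle is \emph{not} a free parameter: it is determined by the analytic structure, so one cannot argue transversality line bundle by line bundle; one must deform the gluing itself and control simultaneously all the bundles $\calO_Z(-l'-Z_i)$ appearing along the sequence, which is what the relative setup is for. Without that (or an equivalent substitute), your proof of (b) is a plan, not a proof; and since you derive (a), (c)--(g) from (b), the whole argument is conditional. The derivations of the corollaries are themselves plausible, though in (d) you should be explicit that the $+1$ correction arises exactly when the minimum is realized by $l$ with $l'+l\leq 0$, where $H^0$ of the trivial bundle contributes the constants --- this is where the rationality dichotomy enters, and it deserves more than the one clause you give it.
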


\section{The relative setup.}

In this section we wish to summarise the results from \cite{R} about relatively generic analytic structures we need in this article. 

We consider an effective integer cycle $Z$ on a resolution $\tX$ with resolution graph $\mathcal{T}$, and a smaller cycle $Z_1 \leq Z$, where we denote $|Z_1| = \calv_1$ and the subgraph corresponding to it by $\mathcal{T}_1$.

We have the restriction map $r: \pic(Z)\to \pic(Z_1)$ and one has also the (cohomological) restriction operator
  $R_1 : L'(\mathcal{T}) \to L_1':=L'(\mathcal{T}_1)$
(defined as $R_1(E^*_v(\mathcal{T}))=E^*_v(\mathcal{T}_1)$ if $v\in \calv_1$, and
$R_1(E^*_v(\mathcal{T}))=0$ otherwise).

For any $\calL\in \pic(Z)$ and any $l'\in L'(\mathcal{T})$ it satisfies
\begin{equation*}
c_1(r(\calL))=R_1(c_1(\calL)).
\end{equation*}

In particular, we have the following commutative diagram as well:

\begin{equation*}  
\begin{picture}(200,40)(30,0)
\put(50,37){\makebox(0,0)[l]{$
\ \ \eca^{l'}(Z)\ \ \ \ \ \stackrel{c^{l'}(Z)}{\longrightarrow} \ \ \ \pic^{l'}(Z)$}}
\put(50,8){\makebox(0,0)[l]{$
\eca^{R_1(l')}(Z_1)\ \ \stackrel{c^{R_1(l')}(Z_1)}{\longrightarrow} \  \pic^{R_1(l')}(Z_1)$}}
\put(162,22){\makebox(0,0){$\downarrow \, $\tiny{$r$}}}
\put(78,22){\makebox(0,0){$\downarrow \, $\tiny{$\fr$}}}
\end{picture}
\end{equation*}

By the `relative case' we mean that instead of the `total' Abel map
$c^{l'}(Z)$ we study its restriction above a fixed fiber of $r$.

That is, we fix some  $\mfl\in \pic^{R_1(l')}(Z_1)$, and we study
the restriction of $c^{l'}(Z)$ to $(r\circ c^{l'}(Z))^{-1}(\mfl)\to r^{-1}(\mfl)$.

 The subvariety $(r\circ c^{l'}(Z))^{-1}(\mfl)
=(c^{R_1(l')}(Z_1) \circ \fr)^{-1}(\mfl) \subset \eca^{l'}(Z)$ is denoted by $\eca^{l', \mfl}(Z)$.

\begin{theorem}\cite{R}
Fix an arbitrary singularity $\tX$ a Chern class $l'\in -\calS'$, an integer effective cycle $Z\geq E$ and a subcycle $Z_1 \leq Z$ and let's have a line bundle  $\mfl\in \pic^{R(l')}(Z_1)$.
Assume that  $\eca^{l', \mfl}(Z)$ is nonempty, then it is smooth of dimension $h^1(Z_1,\mfl)  - h^1(\calO_{Z_1})+ (l', Z)$ and irreducible.
\end{theorem}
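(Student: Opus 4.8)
The plan is to exploit the factorization $r\circ c^{l'}(Z)=c^{R_1(l')}(Z_1)\circ\fr$ supplied by the commutative diagram, which identifies
\[
\eca^{l',\mfl}(Z)=\fr^{-1}(\calF),\qquad \calF:=\big(c^{R_1(l')}(Z_1)\big)^{-1}(\mfl)\subset\eca^{R_1(l')}(Z_1).
\]
Since $\eca^{l',\mfl}(Z)\neq\emptyset$ we also have $\mfl\in\im c^{R_1(l')}(Z_1)$, so the fibre $\calF$ of the \emph{absolute} Abel map on $Z_1$ is governed by the results recalled in Section~3: it is smooth, irreducible, and by the fibre–dimension formula \eqref{eq:dimfiber} applied to $Z_1$ (using $(R_1(l'),Z_1)=(l',Z_1)$) of dimension $(l',Z_1)+h^1(Z_1,\mfl)-h^1(\calO_{Z_1})$. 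Everything is thereby reduced to understanding the restriction morphism $\fr$ over $\calF$.

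First I would analyse $\fr$ pointwise. At a divisor $D\in\eca^{l'}(Z)$ with $\calL=\calO_Z(D)$ and image $D_1=\fr(D)$, the tangent map is the restriction $H^0(D,\calL|_D)\to H^0(D_1,\calL|_{D_1})$ of normal–sheaf sections computing the tangent spaces of $\eca^{l'}(Z)$ and $\eca^{R_1(l')}(Z_1)$, which by smoothness of these spaces have dimensions $(l',Z)$ and $(l',Z_1)$. Because $D_1=D\cap Z_1$ is a $0$-dimensional \emph{subscheme} of $D$, this restriction is the $H^0$ of a surjection of skyscraper sheaves, hence onto, with kernel of dimension $(l',Z)-(l',Z_1)=(l',Z-Z_1)$. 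Thus $\fr$ is a smooth morphism of relative dimension $(l',Z-Z_1)$. Granting in addition that $\fr$ is surjective onto $\calF$ with connected fibres, the preimage of the smooth irreducible $\calF$ under a smooth surjective morphism with connected fibres is smooth and irreducible, of dimension
\[
\dim\calF+(l',Z-Z_1)=(l',Z)+h^1(Z_1,\mfl)-h^1(\calO_{Z_1}),
\]
which is exactly the asserted value.

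It remains to pin down the global behaviour of $\fr$ over $\calF$, and this is where I expect the real work to lie. Surjectivity I would obtain by extending data from $Z_1$ to $Z$ through the ideal–sheaf sequence
\[
0\to\calO_{Z-Z_1}(-Z_1)\to\calO_Z\to\calO_{Z_1}\to 0 .
\]
The induced $H^1$-sequence shows $r\colon\pic^{l'}(Z)\to\pic^{R_1(l')}(Z_1)$ is surjective (the relevant $H^2$ vanishes on these curves), which extends the line bundle, and a section–lifting argument extends a chosen regular section to $Z$. The delicate point is the \emph{connectedness and equidimensionality} of the fibres of $\fr$: a priori both $h^1(Z,\calL)$ and the space of admissible extensions can jump as $D_1$ moves in $\calF$, and I must show these jumps neither break the fibre into components nor change its dimension. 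I expect to control this by the same local analysis of Cartier divisors used for the absolute statement in \cite{NNI}, the bookkeeping relating $h^1(\calO_Z)$, $h^1(\calO_{Z_1})$ and the relative contribution through the displayed exact sequence being what forces the relative dimension to stay constant.

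As an independent check on smoothness I would also compute directly the differential of $\Phi=r\circ c^{l'}(Z)$ at $D$, namely $dr\circ\partial$, where $\partial\colon H^0(\calL|_D)\to H^1(\calO_Z)$ is the connecting map of $0\to\calO_Z\to\calL\to\calL|_D\to 0$ and $dr\colon H^1(\calO_Z)\to H^1(\calO_{Z_1})$ is the surjection above. A short linear–algebra computation inside $H^1(\calO_Z)$ shows that $\dim\ker d\Phi_D$ equals the claimed dimension as soon as $\im d\Phi_D=\ker\!\big(H^1(\calO_{Z_1})\to H^1(Z_1,\mfl)\big)$, and the latter follows once $d\fr_D$ is onto. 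Matching this constant tangent dimension against the lower bound for the local fibre dimension then yields smoothness, and irreducibility comes from the fibration over the irreducible $\calF$; the one genuine obstacle throughout is the fibre control of $\fr$ described above.
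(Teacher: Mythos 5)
You should first be aware that this paper does not prove the statement at all: it is quoted verbatim from \cite{R} as background, so there is no internal proof to compare against, and your argument has to stand on its own. On its own terms, your reduction is sound and, for two of the three assertions, essentially complete. Writing $\eca^{l',\mfl}(Z)=\fr^{-1}(\calF)$ with $\calF=(c^{R_1(l')}(Z_1))^{-1}(\mfl)$ is legitimate (nonemptiness of $\eca^{l',\mfl}(Z)$ does force $\mfl\in\im c^{R_1(l')}(Z_1)$, so the quoted fibre description $\calF=H^0(Z_1,\mfl)_{reg}/H^0(\calO_{Z_1}^*)$ and the dimension formula apply), and your tangent computation for $\fr$ is correct: $T_D\eca^{l'}(Z)=H^0(\calO_D(D))$, the differential is restriction to the closed subscheme $D_1=D\cap Z_1$, the sheaf map $\calO_D(D)\to\calO_{D_1}(D_1)$ is a surjection of skyscrapers, and the lengths $(l',Z)$ and $(R_1(l'),Z_1)=(l',Z_1)$ give a kernel of constant dimension $(l',Z-Z_1)$. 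So $\fr$ is a submersion, and the preimage of the smooth $\calF$ is automatically smooth of pure codimension $h^1(\calO_{Z_1})-h^1(Z_1,\mfl)$, i.e.\ of the asserted dimension. Note that this already disposes of two things you later worry about: fibres of a submersion are smooth and equidimensional, so no control of $h^1$-jumping is needed for the dimension statement, and surjectivity of $\fr$ onto $\calF$ is likewise dispensable, since a submersion is open and an open dense-image fibration over an irreducible base serves just as well.

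The genuine gap is the one you flag but never close: \emph{connectedness (irreducibility) of the fibres of $\fr$} over points of $\calF$, which is exactly what the irreducibility assertion of the theorem rests on in your scheme. ``Granting'' it and saying you ``expect to control this by the same local analysis as in \cite{NNI}'' leaves a full third of the theorem unproven, and it is not a formality: the fibre of $\fr$ over $D_1$ is the space of effective Cartier divisors on $Z$ with Chern class $l'$ whose scheme-theoretic restriction to $Z_1$ equals $D_1$, and one must actually exhibit it as irreducible. The way to do this is the local-chart description of $\eca(Z)$ from \cite{NNI}: at each point of $|D_1|$ the admissible local equations form the coset $u\cdot\tilde f_1+I_{Z_1}\calO_{Z,p}$ (a lift $\tilde f_1$ of the fixed local equation, an affine space of corrections in the conormal direction, and the connected unit group acting), while on the curves $E_v$ with $v\in\calv_2$ the remaining divisor mass moves in $\eca$-type spaces that are irreducible by the absolute theorem of \cite{NNI}; one must also check, as part of this, that fixing $D_1$ really freezes the positions and local multiplicities of $D$ along every $E_v$ with $(Z_1)_v\geq 1$, so that no discrete partition of the Chern class can disconnect the fibre. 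Your ideal-sheaf sequence $0\to\calO_{Z-Z_1}(-Z_1)\to\calO_Z\to\calO_{Z_1}\to 0$ and the surjectivity of $r$ on Picard groups are fine as far as they go, but they live entirely on the $\pic$ side and cannot substitute for this divisor-level analysis. Until that argument is supplied, your proposal proves smoothness and the dimension formula but not irreducibility.
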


Let's recall from \cite{R} the analouge of the theroems about dominance of Abel maps in the relative setup:

\begin{definition}\cite{R}
Fix an arbitrary singularity $\tX$, a Chern class $l'\in -\calS'$, an integer effective cycle $Z\geq E$, a subcycle $Z_1 \leq Z$ and a line bundle $\mfl\in \pic^{R_1(l')}(Z_1)$ as above.
We say that the pair $(l',\mfl ) $ is {\it relative
dominant} on the cycle $Z$, if the closure of $ r^{-1}(\mfl)\cap \im(c^{l'}(Z))$ is $r^{-1}(\mfl)$.
\end{definition}

\begin{theorem}\label{th:dominantrel}\cite{R}
 One has the following facts:

(1) If $(l',\mfl)$ is relative dominant on the cycle $Z$, then $ \eca^{l', \mfl}(Z)$ is
nonempty and $h^1(Z,\calL)= h^1(Z_1,\mfl)$ for any
generic line bundle $\calL\in r^{-1}(\mfl)$.

(2) $(l',\mfl)$ is relative dominant on the cycle $Z$,  if and only if for all
 $0<l\leq Z$, $l\in L$ one has
$$\chi(-l')- h^1(Z_1, \mfl) < \chi(-l'+l)-
 h^1((Z-l)_1, \mfl(-l)).$$, where we denote $(Z-l)_1 = \min(Z-l, Z_1)$.
\end{theorem}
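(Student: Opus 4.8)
\emph{Approach to (1).} The plan is to reduce relative dominance to a single cohomological equality by a dimension count. The restriction map $r\colon\pic^{l'}(Z)\to\pic^{R_1(l')}(Z_1)$ is an affine surjection modelled on the linear map $H^1(\calO_Z)\to H^1(\calO_{Z_1})$; this linear map is onto because the kernel $\calk$ of $\calO_Z\to\calO_{Z_1}$ satisfies $H^2(\calk)=0$, so every fibre $r^{-1}(\mfl)$ is an affine space of dimension $h^1(\calO_Z)-h^1(\calO_{Z_1})$. Write $\psi$ for the restriction of $c^{l'}(Z)$ to $\eca^{l',\mfl}(Z)\to r^{-1}(\mfl)$; for $\calL\in r^{-1}(\mfl)$ the fibre $\psi^{-1}(\calL)$ equals the full Abel fibre $c^{-1}(\calL)$, of dimension $(l',Z)+h^1(Z,\calL)-h^1(\calO_Z)$ by (\ref{eq:dimfiber}). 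Since $\eca^{l',\mfl}(Z)$ is irreducible of dimension $h^1(Z_1,\mfl)-h^1(\calO_{Z_1})+(l',Z)$ by the dimension-and-irreducibility theorem of \cite{R} recalled above, the theorem on the dimension of fibres gives $\dim\overline{\im\psi}=h^1(\calO_Z)-h^1(\calO_{Z_1})+h^1(Z_1,\mfl)-\min_{\calL\in\im\psi}h^1(Z,\calL)$. Hence relative dominance, i.e. $\dim\overline{\im\psi}=\dim r^{-1}(\mfl)$, is equivalent to $\min_{\calL\in\im\psi}h^1(Z,\calL)=h^1(Z_1,\mfl)$. As the surjection $H^1(Z,\calL)\to H^1(Z_1,\mfl)$ forces $h^1(Z,\calL)\ge h^1(Z_1,\mfl)$ for every $\calL\in r^{-1}(\mfl)$, upper semicontinuity of $h^1$ on the affine space $r^{-1}(\mfl)$ then squeezes the generic value on the whole fibre down to $h^1(Z_1,\mfl)$; this proves (1), nonemptiness being automatic once $\im\psi$ is dense.

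\emph{Approach to (2).} Here the task is to compute the generic value of $h^1(Z,\calL)$ on $r^{-1}(\mfl)$ explicitly. I would do this by induction on $Z$, peeling off one curve $E_v$ at a time from $Z$ down to $Z_1$: for $Z'=Z-E_v$ the exact sequence $0\to\calL\otimes\calO_{E_v}(-Z')\to\calL\to\calL|_{Z'}\to 0$ relates $h^1(Z,\calL)$ to $h^1(Z',\calL|_{Z'})$ and to the degree of $\calL$ on $E_v$, while the relatively generic restriction theory of \cite{R} guarantees that a generic $\calL\in r^{-1}(\mfl)$ restricts to a generic bundle on $Z'$ over the induced fibre. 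Running this induction (a relative Laufer-type computation, the analogue of Theorem \ref{th:OLD}(b)) should yield, for generic $\calL\in r^{-1}(\mfl)$,
\[
h^1(Z,\calL)=\chi(-l')-\min_{0\le l\le Z,\ l\in L}\bigl(\chi(-l'+l)-h^1((Z-l)_1,\mfl(-l))\bigr).
\]
The term $l=0$ of the minimum is exactly $\chi(-l')-h^1(Z_1,\mfl)$, so the generic fibre value equals $h^1(Z_1,\mfl)$ precisely when the minimum is attained at $l=0$. Combined with the equivalence from (1), this identifies relative dominance with the inequality $\chi(-l')-h^1(Z_1,\mfl)\le\chi(-l'+l)-h^1((Z-l)_1,\mfl(-l))$ for all $0<l\le Z$.

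\emph{Main obstacle.} Two points carry the real weight. First, the inductive step needs the precise statement that a generic member of $r^{-1}(\mfl)$ stays generic after restriction to the subcycles $Z-E_v$ relative to the fibre it lands in; controlling this compatibility of ``relatively generic'' along the whole tower is the heart of the argument and is exactly what the machinery of \cite{R} is designed to provide. Second, the dimension count of (1) only delivers the non-strict inequality ($\le$), whereas the statement asserts strict inequality for $l>0$. Upgrading $\le$ to $<$ requires the fixed-component analysis: if the minimum were also attained at some $l_0>0$, then every section of the generic $\calL\in r^{-1}(\mfl)$ would vanish along $l_0$, so such a bundle, although generic in the fibre and of minimal $h^1$, would carry a fixed component and hence fail to lie in $\im\psi$; this forces $\min_{\calL\in\im\psi}h^1(Z,\calL)>h^1(Z_1,\mfl)$ and destroys dominance. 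Conversely, strictness for all $l>0$ rules out fixed components, so the generic fibre bundle acquires a regular section and lands in $\im\psi$, giving density. Making this fixed-component dichotomy rigorous in the relative setting, the exact analogue of the role of strict inequality in the description of $\calS'_{an}$ in Theorem \ref{th:OLD}(f), is the step I expect to be most delicate.
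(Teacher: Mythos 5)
You should first note a structural point: the paper never proves this theorem — it is recalled verbatim from \cite{R}, so there is no in-paper proof to match against; the honest comparison is with the toolkit the paper does record, namely the smoothness/irreducibility/dimension theorem for $\eca^{l',\mfl}(Z)$ and Theorem \ref{th:hegy2rel}, both also quoted from \cite{R}. Measured against that, your part (1) is correct and complete as written, and it is the standard argument: $r^{-1}(\mfl)$ is an affine space of dimension $h^1(\calO_Z)-h^1(\calO_{Z_1})$ (surjectivity of $H^1(\calO_Z)\to H^1(\calO_{Z_1})$ from the kernel sheaf being supported on the one-dimensional $E$), the fibres of $\psi$ over $\calL\in r^{-1}(\mfl)$ are the full Abel fibres of dimension (\ref{eq:dimfiber}), dominance forces nonemptiness of $\eca^{l',\mfl}(Z)$ so the dimension theorem applies, and the count gives dominance if and only if $\min_{\calL\in\im\psi}h^1(Z,\calL)=h^1(Z_1,\mfl)$; the squeeze via $h^1(Z,\calL)\ge h^1(Z_1,\mfl)$ and semicontinuity then yields the generic value on the whole fibre.

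For part (2) the one place where your write-up does not carry its own weight is the induction you describe with ``should yield'': the formula you are peeling towards, $h^1(Z,\calL)=\chi(-l')-\min_{0\le l\le Z}\bigl(\chi(-l'+l)-h^1((Z-l)_1,\mfl(-l))\bigr)$ for generic $\calL\in r^{-1}(\mfl)$, is precisely Theorem \ref{th:hegy2rel}, stated in the paper immediately after the theorem under review; in \cite{R} it is proved by Laufer-type computation sequences together with the relatively generic machinery, and it is the genuine analytic content here. Within the paper's framing you may simply cite it, and then your closing dichotomy is the correct and standard step: if the minimum is also attained at some $l_0>0$, then for generic $\calL$ one gets $h^0(Z,\calL)=h^0(Z-l_0,\calL(-l_0))$, so every section vanishes along $l_0$, the generic bundle carries a fixed component and misses $\im\psi$, killing dominance despite the generic $h^1$-value being $h^1(Z_1,\mfl)$ — this is exactly why the criterion must be strict; conversely strictness gives $h^0(Z,\calL)>h^0(Z-E_v,\calL(-E_v))$ for all $v$ in the support, hence $H^0(Z,\calL)_{reg}\neq\emptyset$ generically and density follows. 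One step you should make explicit rather than implicit: in the converse direction you need that a generic $\calL\in r^{-1}(\mfl)$ stays generic after twisting by $-l$ and restricting to $Z-l$ relative to the fibre over $\mfl(-l)$; this follows from surjectivity of $H^1(\calO_Z)\to H^1(\calO_{Z-l})$, and your phrase ``restricts to a generic bundle over the induced fibre'' is exactly this claim and deserves that one-line justification. With Theorem \ref{th:hegy2rel} cited instead of sketched, your reconstruction is, as far as one can tell, the argument of \cite{R} itself.
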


\begin{theorem}\label{th:hegy2rel}\cite{R}
Fix an arbitrary singularity $\tX$, a Chern class $l'\in -\calS'$, an integer effective cycle $Z\geq E$, a subcycle $Z_1 \leq Z$ and a line bundle $\mfl\in \pic^{R_1(l')}(Z_1)$ as in Theorem \ref{th:dominantrel}. 
Then for any $\calL\in r^{-1}(\mfl)$ one has
\begin{equation*}\label{eq:genericLrel}
\begin{array}{ll}h^1(Z,\calL)\geq \chi(-l')-
\min_{0\leq l\leq Z,\ l\in L} \{\,\chi(-l'+l) -
h^1((Z-l)_1, \mfl(-l))\, \}, \ \ \mbox{or equivalently,}\\
h^0(Z,\calL)\geq \max_{0\leq l\leq Z,\, l\in L}
\{\,\chi(Z-l,\calL(-l))+  h^1((Z-l)_1, \mfl(-l))\,\}.\end{array}\end{equation*}
Furthermore, if $\calL$ is generic in $r^{-1}(\mfl)$
then in both inequalities we have equalities.
\end{theorem}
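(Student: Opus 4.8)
The plan is to establish the inequality for every $\calL\in r^{-1}(\mfl)$ by a direct cohomological comparison, and then to promote it to an equality for generic $\calL$ by a descending induction on $Z$ governed by the relative dominance dichotomy of Theorem \ref{th:dominantrel}.

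For the inequality, fix $\calL\in r^{-1}(\mfl)$ and any $l\in L$ with $0\le l\le Z$. Tensoring the structure sequence $0\to\calO_{Z-l}(-l)\to\calO_Z\to\calO_l\to 0$ by $\calL$ produces the inclusion $H^0(Z-l,\calL(-l))\hookrightarrow H^0(Z,\calL)$, whence
\begin{equation*}
h^0(Z,\calL)\ \ge\ h^0(Z-l,\calL(-l))\ =\ \chi(Z-l,\calL(-l))+h^1(Z-l,\calL(-l)).
\end{equation*}
Since $(Z-l)_1=\min(Z-l,Z_1)$ is a subcycle of $Z-l$ and every sheaf involved is supported on a curve, the restriction sequence for $(Z-l)_1\le Z-l$ gives a surjection $H^1(Z-l,\calL(-l))\twoheadrightarrow H^1((Z-l)_1,\calL(-l)|_{(Z-l)_1})$; as $\calL|_{Z_1}=\mfl$ and $(Z-l)_1\le Z_1$, the target equals $H^1((Z-l)_1,\mfl(-l))$. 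Hence $h^1(Z-l,\calL(-l))\ge h^1((Z-l)_1,\mfl(-l))$, and maximising over $l$ yields the stated lower bound on $h^0$; the Riemann--Roch identity $\chi(Z-l,\calL(-l))=\chi(Z,\calL)+\chi(-l')-\chi(-l'+l)$ on cycles transcribes it into the equivalent $h^1$ form.

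For the equality I would induct on the total multiplicity $\sum_v m_v$ of $Z=\sum_v m_v E_v$. If $(l',\mfl)$ is relative dominant on $Z$, then Theorem \ref{th:dominantrel}(1) gives $h^1(Z,\calL)=h^1(Z_1,\mfl)$ for generic $\calL$, while part (2) asserts exactly that $l\mapsto\chi(-l'+l)-h^1((Z-l)_1,\mfl(-l))$ is strictly minimised at $l=0$, where its value is $\chi(-l')-h^1(Z_1,\mfl)$; substituting into the formula returns precisely $h^1(Z,\calL)=h^1(Z_1,\mfl)$, so equality holds. If $(l',\mfl)$ is not relative dominant, then generic $\calL\in r^{-1}(\mfl)$ lies outside $\im(c^{l'}(Z))$, so $H^0(Z,\calL)_{reg}=\emptyset$ and $H^0(Z,\calL)=\bigcup_v H^0(Z-E_v,\calL(-E_v))$; since a vector space is not a finite union of proper subspaces, $H^0(Z,\calL)=H^0(Z-E_{v_0},\calL(-E_{v_0}))$ for some $v_0$, which may be taken constant on a dense open subset of the irreducible affine space $r^{-1}(\mfl)$. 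Applying the inductive hypothesis to $Z-E_{v_0}$ with Chern class $l'-E_{v_0}$, subcycle $(Z-E_{v_0})_1$ and bundle $\mfl(-E_{v_0})$, then re-indexing the resulting maximum by $\tilde l=l+E_{v_0}$ (using $((Z-E_{v_0})-l)_1=(Z-\tilde l)_1$ and $\mfl(-E_{v_0})(-l)=\mfl(-\tilde l)$), computes $h^0(Z,\calL)=h^0(Z-E_{v_0},\calL(-E_{v_0}))$ as the maximum of the same expression over $\tilde l\ge E_{v_0}$. As this is at most the maximum over all $0\le l\le Z$, which in turn is at most $h^0(Z,\calL)$ by the lower bound, the three quantities coincide and the equality follows.

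The step I expect to be the main obstacle is the compatibility of genericity under the reduction $\rho\colon\calL\mapsto\calL(-E_{v_0})|_{Z-E_{v_0}}$: to invoke the inductive hypothesis I need $\calL(-E_{v_0})$ to be generic in the relative family $(r')^{-1}(\mfl(-E_{v_0}))$, i.e. the induced morphism of affine spaces to be dominant. This reduces to the surjectivity of the linear map between $\ker\big(H^1(\calO_Z)\to H^1(\calO_{Z_1})\big)$ and $\ker\big(H^1(\calO_{Z-E_{v_0}})\to H^1(\calO_{(Z-E_{v_0})_1})\big)$, which I would obtain by a snake-lemma chase from the surjectivity of the restriction maps on the $H^1$ of structure sheaves of cycles. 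Checking this surjectivity across all degenerate configurations of $Z_1$ relative to $E_{v_0}$ (in particular according to whether $E_{v_0}$ meets or is disjoint from $|Z_1|$) is the delicate point; everything else is routine bookkeeping with the exact sequences.
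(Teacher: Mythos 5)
The paper does not actually prove this theorem: it is recalled verbatim from \cite{R}, so there is no internal proof to compare against. Judged on its own merits, your argument is correct in outline and is precisely the standard proof scheme for statements of this type (it is the relative version of the argument for the generic $h^1$--formula of \cite{NNI}, \cite{NNII}): the lower bound via the inclusions $H^0(Z-l,\calL(-l))\hookrightarrow H^0(Z,\calL)$ combined with the surjectivity of $H^1$--restriction maps (which holds since $H^2$ vanishes on these one--dimensional supports), your identification $\chi(Z-l,\calL(-l))-\chi(Z,\calL)=\chi(-l')-\chi(-l'+l)$ (consistent with the Remark following Theorem \ref{relgen}), and the equality via the dichotomy of Theorem \ref{th:dominantrel}: relative dominance pins the minimum at $l=0$ by criterion (2) and gives $h^1(Z,\calL)=h^1(Z_1,\mfl)$ by (1), while non--dominance forces $H^0(Z,\calL)_{reg}=\emptyset$, hence a fixed component $E_{v_0}$ (uniform on a dense open subset of the irreducible fiber, by constructibility) along which one descends.

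Two points need shoring up to close your induction. First, a Chern class and cycle drift: your inductive hypothesis must be stated for arbitrary $l'\in L'$ and arbitrary $Z>0$, since $l'-E_{v_0}$ in general leaves $-\calS'$ (for $v$ a neighbour of $v_0$ one has $(l'-E_{v_0},E_v)=(l',E_v)-(E_{v_0},E_v)$, which can be negative) and $Z-E_{v_0}\geq E$ fails as soon as some coefficient of $Z$ equals $1$. This strengthening is harmless: if $\eca^{l''}(Z')=\emptyset$ then $H^0(Z',\calL)_{reg}=\emptyset$ for \emph{every} $\calL\in\pic^{l''}(Z')$, so the non--dominant branch applies verbatim, and whenever the dominant branch occurs the Chern class lies in $-\calS'$ automatically, so Theorem \ref{th:dominantrel} is available. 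Second, the genericity transfer you flag as the main obstacle does go through, but not from the surjectivity of the $H^1(\calO)$--restrictions alone: with $K=\ker\bigl(H^1(\calO_Z)\to H^1(\calO_{Z_1})\bigr)$ and $K'=\ker\bigl(H^1(\calO_{Z-E_{v_0}})\to H^1(\calO_{(Z-E_{v_0})_1})\bigr)$, the snake lemma identifies $\coker(K\to K')$ with the cokernel of $\ker\bigl(H^1(\calO_Z)\to H^1(\calO_{Z-E_{v_0}})\bigr)\to\ker\bigl(H^1(\calO_{Z_1})\to H^1(\calO_{Z_1-E_{v_0}})\bigr)$; the only nontrivial configuration is $(Z_1)_{v_0}=Z_{v_0}$ (otherwise $(Z-E_{v_0})_1=Z_1$ and the target kernel vanishes), and there these two kernels are the images of $H^1\bigl(E_{v_0},\calO_{E_{v_0}}(-(Z-E_{v_0}))\bigr)$ and $H^1\bigl(E_{v_0},\calO_{E_{v_0}}(-(Z_1-E_{v_0}))\bigr)$, between which the natural map is surjective because the inclusion $\calO_{E_{v_0}}(-(Z-E_{v_0}))\hookrightarrow\calO_{E_{v_0}}(-(Z_1-E_{v_0}))$ has zero--dimensional cokernel. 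Hence $\rho$ is a surjective affine map, and since the equality locus on the target contains a dense open subset (upper semicontinuity of $h^1$), a generic $\calL$ maps to a generic $\calL(-E_{v_0})$. With these two repairs your proof is complete and, as far as one can reconstruct the cited argument of \cite{R}, essentially the intended one.
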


In the following we recall the results from \cite{R} about relatively generic analytic structures:

Let's fix a a topological type, in other words a resolution graph $\mathcal{T}$ with vertex set $\calv$,
we consider a partition $\calv = \calv_1 \cup  \calv_2$ of the set of vertices $\calv=\calv(\mathcal{T})$.

They define two (not necessarily connected) subgraphs $\mathcal{T}_1$ and $\mathcal{T}_2$.

We call the intersection of an exceptional divisor from
$ \calv_1 $ with an exceptional divisor from  $ \calv_2 $ a
{\it contact point}.
 For any $Z\in L=L(\mathcal{T})$ we write $Z=Z_1+Z_2$,
where $Z_i\in L(\mathcal{T}_i)$ is
supported in $\mathcal{T}_i$ ($i=1,2$).
Furthermore, parallel to the restrictions
$r_i : \pic(Z)\to \pic(Z_i)$ one also has the (cohomological) restriction operators
  $R_i : L'(\mathcal{T}) \to L_i':=L'(\mathcal{T}_i)$
(defined as $R_i(E^*_v(\mathcal{T}))=E^*_v(\mathcal{T}_i)$ if $v\in \calv_i$, and
$R_i(E^*_v(\mathcal{T}))=0$ otherwise).
For any $l'\in L'(\mathcal{T})$ and any $\calL\in \pic^{l'}(Z)$ it satisfies $c_1(r_i(\calL))=R_i(c_1(\calL))$.

In the following for the sake of simplicity we will denote $r = r_1$ and $R = R_1$.

Furthermore let's have a fixed analytic type $\tX_1$ for the subtree $\mathcal{T}_1$ (if it is disconnected, then an analytic type for each connected component).

Also for each vertex $v_2 \in \calv_2$ which has got a neighbour $v_1$ in $\calv_1$ we fix a cut $D_{v_2}$ on $\tX_1$, along we glue the exceptional divisor $E_{v_2}$.
This means that $D_{v_2}$ is a divisor, which intersects the exceptional divisor $E_{v_1}$ transversally in one point and we will glue the exceptional divisor $E_{v_2}$ in a way, 
such that $E_{v_2} \cap \tX_1$ equals $D_{v_2}$.

If for some vertex $v_2 \in \calv_2$, which has got a neighbour in $\calv_1$ we don't say explicitely what is the fixed cut, then it should be understood in the way that we glue the exceptional divisor $E_{v_2}$ along a generic cut.

Let's plumb the tubular neihgbourhoods of the exceptional divisors $E_{v_2}, v_2 \in \calv_2$  with the above conditions generically to the fixed resolution $\tX_1$, we get a singularity $\tX$ with resolution graph $\mathcal{T}$ and we say that $\tX$ is a relatively generic singularity corresponding to the analytical structure $\tX_1$ and the cuts $D_{v_2}$, for the more precise explanation of genericity look at \cite{R}.

We have the following theorem with this setup from \cite{R}:

\begin{theorem}\cite{R}\label{relgen}
Let's have the setup as above, so two resolution graphs $\mathcal{T}_1 \subset \mathcal{T} $ with vertex sets $\calv_1 \subset \calv$, where $\calv = \calv_1 \cup \calv_2$  and a fixed singularity $\tX_1$ for the resolution graph $\mathcal{T}_1$, and cuts $D_{v_2}$ along we glue $E_{v_2}$ for all vertices $v_2 \in \calv_2$, which have got a neighbour in $\calv_1$.

Assume that $\tX$ has a relatively generic analytic stucture on $\mathcal{T}$ corresponding to $\tX_1$ and the cuts $D_{v_2}$.

Furthermore let's have an effective cycle $Z$ on $\tX$ and let's have $Z = Z_1 + Z_2$, where $|Z_1| \subset \calv_1$ and $|Z_2| \subset \calv_2$.

1) 
Let's have the natural line bundle $\calL= \calO_{\tX}(l')$ on $\tX$, such that $ l' = - \sum_{v \in \calv} a_v E_v$, with $a_v > 0, v \in \calv_2 \cap |Z|$, and let's denote $c_1 (\calL | Z) = l'_ m \in L'_{|Z|}$, furthermore let's denote $\mfl = \calL | Z_1$, then we have the following:

We have $H^0(Z,\calL)_{reg} \not=\emptyset$ if and only if $(l',\mfl)$ is relative dominant on the cycle $Z$ or equivalently:

\begin{equation*}
\chi(-l')- h^1(Z_1, \mfl) < \chi(-l'+l)-  h^1((Z-l)_1, \mfl(-l)),
\end{equation*}
for all $0 < l \leq Z$.

2) 

Let's have the same setup as in part 1), then we have:

\begin{equation*}
h^1(Z, \calL) =  h^1(Z, \calL_{gen}),                                   
\end{equation*}
where $\calL_{gen}$ is a generic line bundle in $ r^{-1}(\mfl) \subset \pic^{l'_m}(Z)$, or equivalently:

\begin{equation*}
h^1(Z, \calL)= \chi(-l') - \min_{0 \leq l \leq Z}(\chi(-l'+l)-  h^1((Z-l)_1, \mfl(-l))).
\end{equation*}

3) 

Let's have the natural line bundle $\calL= \calO_{\tX}(l')$ on $\tX$, such that $ l' = - \sum_{v \in \calv} a_v E_v$, and assume that  $a_v \neq 0$ if $ v \in \calv_2 \cap |Z|$.
Let's denote $c_1 (\calL | Z) = l'_ m \in  L'_{|Z|}$ and $\mfl = \calL | Z_1$.

Assume that $H^0(Z,\calL)_{reg} \not=\emptyset$, and pick an arbitrary $D \in (c^{l'_m}(Z))^{-1}\calL \subset \eca^{l'_m, \mfl}(Z)$.
Then $ c^{l'_m}(Z) : \eca^{l'_m, \mfl}(Z) \to r^ {-1}(\mfl  )$ is a submersion in $D$, and $h^1(Z,\calL) = h^1(Z_1, \mfl)$.

In particular the map $ c^{l'_m}(Z) :  \eca^{l'_m, \mfl}(Z) \to r^ {-1}(\mfl  )$ is dominant, which means that $(l'_m,\mfl)$ is relative dominant on the cycle $Z$, or equivalently:

\begin{equation*}
\chi(-l')- h^1(Z_1, \mfl) < \chi(-l'+l)-  h^1((Z-l)_1, \mfl(-l)),
\end{equation*}
for all $0 < l \leq Z$.
\end{theorem}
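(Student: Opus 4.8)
The three parts are logically intertwined, so the plan is to concentrate on a single genericity statement and then harvest (1), (2) and (3) as reformulations of the relative-dominance results already available. Indeed, Theorem \ref{th:hegy2rel} already pins down the generic value of $h^1$ inside the fiber $r^{-1}(\mfl)$ together with the min-formula, while Theorem \ref{th:dominantrel}(2) records the equivalence of relative dominance with the displayed strict inequality. Hence, once one knows that the restricted natural line bundle $\calL=\calO_{\tX}(l')|Z$ is \emph{cohomologically generic} inside $r^{-1}(\mfl)$, the equality in part (2) becomes Theorem \ref{th:hegy2rel}, the nonemptiness criterion in part (1) becomes Theorem \ref{th:dominantrel}, and the equality $h^1(Z,\calL)=h^1(Z_1,\mfl)$ in part (3) becomes Theorem \ref{th:dominantrel}(1). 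So the real content is the genericity claim plus the submersion assertion of part (3).

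The core claim I would isolate is: for a relatively generic $\tX$ (with $\tX_1$ and the cuts $D_{v_2}$ fixed), the bundle $\calO_{\tX}(l')|Z$ is a generic point of $r^{-1}(\mfl)\subset\pic^{l'_m}(Z)$. First I would verify that $\mfl=\calO_{\tX}(l')|Z_1$ is determined by $\tX_1$ and $l'$ alone, so that as the gluing of the $\calv_2$-neighbourhoods varies we obtain a family of bundles $\calO_{\tX}(l')|Z$ that stays inside the single fiber $r^{-1}(\mfl)$. Upper semicontinuity of $h^1$ then reduces the problem to exhibiting one gluing attaining the minimal value of $h^1$ on $r^{-1}(\mfl)$ together with the statement that the gluing moduli move $\calO_{\tX}(l')|Z$ dominantly over the fiber.

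I would prove this dominance by induction on the $\calv_2$-part $Z_2$, peeling off one divisor $E_v$ ($v\in\calv_2$) at a time through the exact sequence $0\to\calL(-E_v)\to\calL\to\calL|_{E_v}\to 0$ and its long exact cohomology sequence. At each step the hypothesis $a_v\neq 0$ (resp. $a_v>0$) forces the restricted natural bundle to be genuinely twisted along $E_v$, so that the generic choice of gluing parameter at the contact points on $E_v$ acts nontrivially on the restriction and makes the relevant connecting/restriction map as surjective as the fiber geometry permits. To identify the tangent directions swept out and to check the needed surjectivity, I would pass to the dual picture via Laufer duality (Theorem \ref{th:DUALVO}), where the motion of the Abel image corresponds to differential forms in $\Omega_Z(I)$ with prescribed pole behaviour; genericity of the gluing then means such forms are in general position. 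This feeds directly into the tangent-space computation of the Abel map: the differential of $c^{l'_m}(Z)$ at $D$ is the connecting map $H^0(\calL|_D)\to H^1(\calO_Z)$ with image $\ker(H^1(\calO_Z)\to H^1(\calL))$, from which the submersion onto $r^{-1}(\mfl)$ in part (3) follows once $h^1(Z,\calL)=h^1(Z_1,\mfl)$ is known, the matching of dimensions being the relative dimension formula for $\eca^{l'_m,\mfl}(Z)$ from \cite{R}.

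The main obstacle is precisely this genericity/dominance step: transferring genericity of the analytic gluing of the $\calv_2$-divisors into genericity of the induced line bundle inside the relative Picard fiber, and pinning down that $a_v\neq 0$ on $\calv_2\cap|Z|$ is exactly the condition that makes the gluing moduli act dominantly. This requires controlling the local contribution of each contact point to $\calL$ and to its cohomology, and it is where the explicit local model of the plumbing must be combined with the duality with differential forms. By comparison, the purely homological bookkeeping — the long exact sequences and the dimension counts matching Theorems \ref{th:dominantrel} and \ref{th:hegy2rel} — is routine.
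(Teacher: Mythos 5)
First, a point of order: this paper does not actually prove Theorem \ref{relgen} --- it is recalled verbatim from \cite{R}, so there is no in-paper proof to compare against; what follows assesses your proposal against the strategy that reference (and its ancestor \cite{NNII}) actually implements. Your reduction is sound as bookkeeping: granting that $\calL=\calO_{\tX}(l')|Z$ behaves cohomologically like a generic element of $r^{-1}(\mfl)$, parts (1) and (2) do collapse onto Theorems \ref{th:dominantrel} and \ref{th:hegy2rel}, and your part-(3) argument is essentially complete --- since the kernel of the differential of $c^{l'_m}(Z)$ at any $D$ is exactly the tangent space of the fiber $c^{-1}(\calL)\cong H^0(Z,\calL)_{reg}/H^0(\calO_Z^*)$ (of dimension $h^0(Z,\calL)-h^0(\calO_Z)$ at every point, by the Mittag--Leffler sequence of \cite{NNI}), the equality $h^1(Z,\calL)=h^1(Z_1,\mfl)$ plus the dimension formula for $\eca^{l'_m,\mfl}(Z)$ forces full rank of the relative differential at \emph{every} $D$ in the fiber, not just at a generic one.

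The genuine gap is the central genericity/dominance claim, which you correctly flag as the main obstacle but whose proposed treatment would not go through as sketched. The assertion ``as the gluing varies, $\calO_{\tX}(l')|Z$ moves dominantly in the fiber $r^{-1}(\mfl)$'' is not even well posed a priori: changing the plumbing changes the analytic structure of the non-reduced space $Z$, hence changes $\pic(Z)$, $H^1(\calO_Z)$ and the fiber $r^{-1}(\mfl)$ themselves --- and $h^1(\calO_Z)$ can jump along such families, so there is no locally trivial family of Picard fibers in which ``the'' bundle traces a dominant map. This is precisely why \cite{R} and \cite{NNII} do not argue via an abstract moduli-to-fiber dominance; they work with explicit cocycle/period computations (Laufer duality and integration formulas measuring how the periods of the natural line bundle vary with the plumbing parameters) and an induction on the cycle, establishing the effective cohomological identities directly rather than literal genericity of a point. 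Your peeling induction via $0\to\calL(-E_v)\to\calL\to\calL|_{E_v}\to 0$ does not engage this: it never locates where the hypothesis $a_v>0$ (parts (1)--(2)) versus merely $a_v\neq 0$ (part (3)) on $\calv_2\cap|Z|$ is consumed, and ``genericity of the gluing means such forms are in general position'' is a restatement of what must be proven, not a proof. Until that step is carried out at the level of the period map, the ``only if'' direction of part (1) and the equality in part (2) remain unestablished.
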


\begin{remark}

In the theorem above in any formula one can replace $l'$ with $l'_m$, since for every $0 \leq l \leq Z$ one has $\chi(-l')- \chi(-l'+l) = \chi(-l'_m)- \chi(-l'_m+l) = -(l', l) - \chi(l)$.
\end{remark}

\section{Class of irreducible affine varieties}

Let's have a complex vector space $V$ and an irreducible affine subvariety $X \subset V$. 

Let's pick a generic element $w \in V^*$, and let's denote the number of smooth points $p$ of $X$, such that $w$ vanishes on $T_p(X)$ by $\tau(X)$.

Let's denote the projective closure of the affine variety $X$ in the projective closure of $V$ by $\overline{X}$, this is a projective subvariety, and let's denote it's dual projective variety by $(\overline{X})^{*}$.

One can see easily that $\tau(X) > 0$ if and only if $(\overline{X})^{*}$ is a hypersurface in the dual projective space and $\tau(X) = 0$ otherwise let's assume that $\tau(X) > 0$ in the following.

The degree of the projective variety $(\overline{X})^{*}$ is the class of the projective variety $\overline{X}$, let's denote it by $cl(\overline{X})$.

The number $cl(\overline{X})$ is closely related to $\tau(X)$ as explained in the following:

Let $\mu$ be the multiplicity of the infinite hyperplane in the dual variety $(\overline{X})^{*}$ (if it doesn't contain the infinite hyperplane, then $\mu = 0$), we claim that $cl(\overline{X}) = \mu + \tau(X)$.

Indeed the affine hyperplanes, on which $w \in V^*$ vanishes and the infinite hyperplane form a generic pencil of projective hyperplanes through the infinite hyperplane, and $\tau(X)$ is exactly the number of intersection points of this pencil with $(\overline{X})^{*}$, where we don't count the multiplicity of intersection at the infinite hyperplane, but this is exactly $\mu$ and this yields the statement.

We have the following easy florklore lemma about the behaviour of the invariant $\tau$ with respect to direct sums:
\begin{lemma}\label{dir}
Let $V_i$ be complex vector spaces for $1 \leq i \leq n$ and $X_i \subset V_i$ be irreducible subvarieties.
Let's denote $V =  \oplus_{1 \leq i \leq n} V_i$ and $X = \oplus_{1 \leq i \leq n} X_i$, then we have $\tau(X) = \prod_{1 \leq i \leq n} \tau(X_i)$.
\end{lemma}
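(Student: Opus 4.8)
The plan is to understand the invariant $\tau$ geometrically via conormal varieties and to show it is multiplicative under direct sums by exhibiting a natural bijection (with matching transversality) between the relevant tangency data. Recall that $\tau(X)$ counts the smooth points $p \in X$ at which a fixed generic covector $w \in V^*$ annihilates the tangent space $T_p(X)$. Equivalently, if we consider the conormal variety $N^*(X) = \overline{\{(p, \xi) : p \in X_{sm}, \ \xi|_{T_pX} = 0\}} \subset V \times V^*$, then $\tau(X)$ is the number of points of $N^*(X)$ lying over the generic covector $w$ in the second projection, i.e. the degree of the map $N^*(X) \to V^*$ onto its image (which is a hypersurface precisely when $\tau(X) > 0$). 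So the first step is to reformulate $\tau$ as this intersection-theoretic count, which reduces the lemma to a statement about conormal varieties of products.

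First I would reduce to the case $n = 2$ by induction, since the general case follows by associativity of direct sums. For $V = V_1 \oplus V_2$ and $X = X_1 \oplus X_2$, I would identify $V^* = V_1^* \oplus V_2^*$ and write the generic covector as $w = (w_1, w_2)$ with each $w_i$ generic in $V_i^*$. The key geometric observation is that at a point $p = (p_1, p_2)$ with both $p_i$ smooth on $X_i$, the product $X$ is smooth with $T_p(X) = T_{p_1}(X_1) \oplus T_{p_2}(X_2)$, so $w$ annihilates $T_p(X)$ if and only if $w_1$ annihilates $T_{p_1}(X_1)$ and simultaneously $w_2$ annihilates $T_{p_2}(X_2)$. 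This immediately produces a bijection between the tangency points of $X$ for $w$ and the product of the tangency point sets of the $X_i$ for the $w_i$, giving $\tau(X) = \tau(X_1) \cdot \tau(X_2)$ at the level of counting.

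The step I expect to be the main obstacle is the transversality/genericity bookkeeping needed to make this count honest, i.e. to ensure the points are counted with multiplicity one on both sides and that no tangency points come from the boundary where some $p_i$ fails to be smooth. Concretely, I must check that a generic $w = (w_1, w_2)$ is still generic enough that all tangency points of $X$ are transverse and lie in the locus where both factors are smooth; this uses that the singular locus of $X$ is the union $(X_1)_{sing} \times X_2 \cup X_1 \times (X_2)_{sing}$, which has codimension at least two, together with the observation that for a generic linear functional the tangency locus avoids any fixed subvariety of positive codimension in the smooth part. I would verify that each $w_i$ being generic in $V_i^*$ forces the finite tangency sets of the factors to consist of simple (reduced) points, and then argue that the tangent-space direct-sum decomposition makes the defining conditions independent, so multiplicities multiply as well. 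Once this transversality is in place the counting bijection upgrades to an equality of the integer invariants, completing the proof.
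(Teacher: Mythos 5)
Your proof is correct and follows essentially the same route as the paper's: for a generic $w=(w_1,\dots,w_n)\in V^*=\oplus_i V_i^*$ each component $w_i$ is generic, the tangent space at a smooth point $p=(p_1,\dots,p_n)$ splits as $T_p(X)=\oplus_i T_{p_i}(X_i)$, and the tangency points of $X$ are exactly the tuples of tangency points of the factors, giving the product formula. The conormal-variety reformulation and the multiplicity/boundary bookkeeping are harmless but unnecessary extras here, since $\tau$ is by definition a literal count of \emph{smooth} points where $w$ kills the tangent space and the smooth locus of $X$ is exactly the product of the smooth loci, which is precisely how the paper argues.
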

\begin{proof}
Notice that we have $V^* = \oplus_{1 \leq i \leq n} V_i^*$.
Now let's have a generic element $w \in V^* $, this gives generic elements $w_i \in  V_i^*$.
We know that there are $\tau(X_i)$ number of smooth points $p_{i, 1}, \cdots, p_{i, \tau(X_i)}$ of $X_i$, such that $w_i$ vanishes on $T_{p_{i, j}}X_i$, where $1 \leq j \leq \tau(X_i)$.
Now the points $p$ in $X$, such that $w$ vanishes on $T_p X$ are exactly the points $p = (p_{1, j_1}, \cdots, p_{n, j_n})$, where $1 \leq j_i \leq \tau(X_i)$ for all $1 \leq i \leq n$, this proves the statement.
\end{proof}

\section{Base points of canonical line bundles and hyperelliptic involutions on generic singularities}

In the following we recall some nessacary material from \cite{H} about generic analytic types which will be crucial in the proof of our main theorem.

In the following we recall a few lemmas about the base points of the line bundle $\calO_Z(K + Z)$, where $Z$ is an integer effective cycle on a generic singularity and 
$H^0(\calO_Z( K+Z))_{reg} \neq \emptyset$:

\begin{lemma}\label{baseI}\cite{H}
Assume that $\mathcal{T}$ is a resolution graph and $\tX$ is a generic singularity corresponding to it, and let's have a cycle $Z$ on it, such that $|Z| = \calv$, and $H^0(\calO_{Z}( Z+K))_{reg} \neq \emptyset$.
The line bundle $\calL_{Z} = \calO_Z(K + Z)$ hasn't got a basepoint at the intersection points of exceptional divisors.
\end{lemma}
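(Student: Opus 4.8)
The plan is to identify $\calL_Z = \calO_Z(K+Z)$ with the dualizing sheaf $\omega_Z$ of the (Gorenstein) curve $Z$ and to read its global sections off as meromorphic $2$-forms. Since $Z$ is a Cartier divisor in the smooth surface $\tX$, one has $\omega_Z \cong \Omega^2_{\tX}(Z)|_Z$, and by Laufer duality (Theorem \ref{th:DUALVO} together with the discussion of $\Omega_Z(I)$) the space $H^0(Z,\calL_Z)$ is realized by forms $\omega \in H^0(\tX, \Omega^2_{\tX}(Z))$ modulo $H^0(\tX, \Omega^2_{\tX})$, with $h^0(Z,\calL_Z) = h^1(\calO_Z)$. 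The hypothesis $H^0(\calO_Z(K+Z))_{reg}\neq\emptyset$ guarantees that some such form attains the full pole order along each component $E_v$ (generically along $E_v$).

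First I would reformulate the statement locally at a node. Fix an intersection point $p = E_u\cap E_v$ and local coordinates $(x,y)$ with $E_u=\{x=0\}$, $E_v=\{y=0\}$, and let $m_u, m_v\geq 1$ be the multiplicities of $E_u, E_v$ in $Z$ (both positive since $|Z|=\calv$). Near $p$ a section of $\calL_Z$ is represented by $\omega = g(x,y)\,dx\wedge dy/(x^{m_u}y^{m_v})$ with $g$ holomorphic, and its image in the fibre $\calL_Z\otimes\C(p)$ is $g(0,0)$. Hence $p$ is \emph{not} a base point of $\calL_Z$ precisely when there is a global form whose pole order is maximal, namely $m_u$ along $E_u$ and $m_v$ along $E_v$ simultaneously at the node, i.e. with $g(0,0)\neq 0$. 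Equivalently, writing the evaluation map $\mathrm{ev}_p: H^0(Z,\calL_Z)\to \calL_Z\otimes\C(p)\cong\C$, I must show $\mathrm{ev}_p\neq 0$.

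To control $\mathrm{ev}_p$ I would localise the problem at the node via the peeling exact sequences relating $\calL_Z$, $\calL_Z|_{Z-E_u}$ and $\calL_Z|_{Z-E_v}$, which isolate the contributions of the two branches through $p$, and then dualise: by Laufer/Serre duality the non-vanishing of $\mathrm{ev}_p$ becomes the statement that a prescribed leading-coefficient functional is attained. This is where genericity enters. On a generic singularity the topological formulas of Theorem \ref{th:OLD}(a),(b) compute all the relevant $h^1$'s, and the relatively generic gluing of Theorem \ref{relgen} controls the restriction maps to the subcycles supported on $E_u$ and on $E_v$: because the two branches are plumbed generically at the contact point $p$, the leading-term functionals along $E_u$ and along $E_v$ behave independently, so no form attaining full pole order along $E_u$ is forced to drop order along $E_v$ exactly at $p$. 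Thus the base locus of $\calL_Z$, a proper closed subset of $E$, is disjoint from the node, giving $\mathrm{ev}_p\neq 0$.

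The main obstacle is exactly this last step: ruling out a \emph{forced} simultaneous drop of pole order at a single node. Attaining maximal pole order along $E_u$ and along $E_v$ separately is guaranteed by $H^0(\calL_Z)_{reg}\neq\emptyset$, but coordinating both at the same point $p$ is a genuinely analytic (not merely topological) condition; it has to be extracted from the genericity of the plumbing at the contact point, using the cohomological control of Theorems \ref{th:OLD} and \ref{relgen}, together with the bookkeeping that any remaining zeros of the leading coefficient are pushed onto smooth points of $E$ away from the nodes.
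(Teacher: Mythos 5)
You have set the problem up correctly, but be aware that this paper does not actually prove Lemma \ref{baseI}: it is recalled from \cite{H} and used as a black box, so the comparison is with the argument there and with the closely parallel base-point arguments that this paper does carry out (the $Z_v>1$ case of part 2 of the main theorem, and Lemma \ref{ind}). Your reduction is fine as far as it goes: identifying $H^0(Z,\calL_Z)$ with $2$-forms with pole $\leq Z$ (surjectivity of $H^0(\tX,\Omega^2_{\tX}(Z))\to H^0(Z,\calL_Z)$ follows from Grauert--Riemenschneider vanishing), the local model $\omega = g(x,y)\,dx\wedge dy/(x^{m_u}y^{m_v})$ at a node, and the reformulation ``$p$ is not a base point iff some global form has $g(0,0)\neq 0$,'' i.e.\ $\mathrm{ev}_p\neq 0$, are all correct and consistent with the duality framework of Theorem \ref{th:DUALVO}.

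The genuine gap is the decisive step, which you assert rather than prove: ``because the two branches are plumbed generically at $p$, the leading-term functionals along $E_u$ and $E_v$ behave independently, so no form is forced to drop order at the node.'' This is not a proof mechanism; it is the lemma restated, and as a heuristic it proves too much. For generic singularities the bundle $\calO_Z(K+Z)$ \emph{does} have base points on the smooth parts of exceptional divisors with $Z_v\geq 2$ --- that is exactly why Lemma \ref{baseII} requires $Z_v=1$ and why part 2 of the main theorem needs the extra hypothesis $v\in |l'|_*$ together with the long induction of Lemma \ref{ind}. So ``genericity pushes base points away'' cannot be the reason; whatever argument you give must see a concrete difference between a node and a smooth point, and that difference only emerges from computation. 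A second, structural problem: the node $E_u\cap E_v$ is part of the fixed combinatorial data of the plumbing, so unlike the cut points $D_{v_2}$ in the relative setup of Theorem \ref{relgen}, it cannot be ``moved'' or varied to exploit genericity directly; the relative-genericity machinery must instead be applied after modifying the space (blowing up the alleged base point or auxiliary generic points) and then comparing $h^1$'s of the resulting cycles via the topological formulas of Theorem \ref{th:OLD}, until a $\chi$-minimality count forced by $H^0(\calO_Z(Z+K))_{reg}\neq\emptyset$ is violated. That contradiction scheme --- blow up, track $h^1(\calO_{Z'}(\cdot))$ through Laufer-type sequences, contradict minimality --- is how \cite{H} and every analogous statement in this paper handle base points, and none of it is present in your sketch; as written, your proposal is a correct reduction followed by an unproven appeal to genericity exactly where the work lies.
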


\begin{lemma}\cite{H}\label{baseII}
Assume that $\mathcal{T}$ is an arbitrary resolution graph and $\tX$ a generic singularity corresponding to it, and let's have a cycle $Z$ on it, such that $|Z| = \calv$, and $H^0(\calO_Z(Z+K))_{reg} \neq \emptyset$.
Assume that $v \in |Z|$ and $Z_v = 1$, then the line bundle $\calL_{Z} = \calO_Z(K + Z)$ hasn't got a basepoint on the regular part of the exceptional divisor $E_v$.
\end{lemma}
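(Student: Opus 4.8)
The plan is to reduce the statement to base-point-freeness of a linear system on the rational curve $E_v\cong\bP^1$ and then to feed in genericity. First I would record a duality reduction. Since $Z$ is an effective divisor on the smooth surface $\tX$, the one-dimensional scheme $Z$ is Gorenstein with dualizing sheaf exactly $\calL_Z=\calO_Z(K+Z)$. A regular point $p\in E_{v,\mathrm{reg}}$ is a smooth point of $Z$, hence a Cartier divisor, and $p$ fails to be a base point of $\calL_Z$ iff the evaluation $H^0(Z,\calL_Z)\to\calL_Z\otimes k(p)$ is onto, i.e. iff $h^0(Z,\calL_Z(-p))=h^0(Z,\calL_Z)-1$. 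Applying Serre duality on $Z$ to the sequence $0\to\calO_Z\to\calO_Z(p)\to\C_p\to0$, and using $h^0(\calO_Z)=1$ (connectedness), this is equivalent to $h^0(Z,\calO_Z(p))=1$: there is no non-constant section of $\calO_Z(p)$.

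The heart of the argument is the restriction to $E_v$. Because $Z_v=1$, adjunction gives $\calL_Z|_{E_v}=\omega_{E_v}\bigl(\sum_{w\sim v}Z_w\,p_{vw}\bigr)=\calO_{\bP^1}(e)$ with $e=\sum_{w\sim v}Z_w-2$, where $p_{vw}=E_v\cap E_w$ are the contact points. The hypothesis $H^0(\calL_Z)_{\mathrm{reg}}\neq\emptyset$ supplies a section not vanishing identically on $E_v$, so $\calL_Z|_{E_v}$ has a nonzero section and thus $e\geq0$; in particular $E_v$ is not a fixed component. A point $p\in E_{v,\mathrm{reg}}$ is a base point of $\calL_Z$ exactly when every element of the image system $W:=\operatorname{im}\bigl(H^0(Z,\calL_Z)\to H^0(E_v,\calL_Z|_{E_v})\bigr)$ vanishes at $p$, so the lemma reduces to showing that $W\subset|\calO_{\bP^1}(e)|$ has no base point on $E_{v,\mathrm{reg}}$ (any base points must sit at the contact points, which are excluded). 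To control $W$ I would use $0\to\calL_Z(-E_v)|_{Z'}\to\calL_Z\to\calL_Z|_{E_v}\to0$ with $Z'=Z-E_v$. Serre duality on $Z'$ identifies the $H^1$ of the kernel with $H^0(\calO_{Z'})$, whose dimension is the number of connected components of $Z'$; since $\mathcal{T}$ is a tree this equals the valence $\delta_v$. Hence $W$ has codimension $\delta_v-1$ in $|\calO_{\bP^1}(e)|$, with $\dim_\C W=\sum_{w\sim v}(Z_w-1)$.

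It remains to prove that this (possibly proper) subsystem $W$ is base-point-free on $E_{v,\mathrm{reg}}$, and this is where the generic analytic structure must enter. I would view the configuration through the relatively generic setup of Section 5, partitioning $\calv=\calv_1\cup\{v\}$ and gluing $E_v$ along generic cuts at its neighbours, so that Theorem \ref{relgen} applies. When $\dim_\C W\geq2$, genericity of the gluing forces two general members of $W$ to be coprime sections of $\calO_{\bP^1}(e)$, so $W$ is base-point-free everywhere; here the combinatorial cohomology formulas of Theorem \ref{th:OLD} together with Lemma \ref{baseI} can be used to verify that the relevant $h^1$ does not jump as $p$ varies over $E_{v,\mathrm{reg}}$, ruling out base points at a regular point.

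The genuinely delicate case, and the main obstacle, is $\dim_\C W=1$ (for instance $\delta_v=2$ with one neighbour of multiplicity $2$ and one of multiplicity $1$, giving $e=1$). Then $W$ is spanned by a single residue $1$-form $s\in H^0(\calO_{\bP^1}(e))$, whose $e$ zeros are automatically base points, and a naive "generic subsystem" argument is insufficient because a generic section of $\calO_{\bP^1}(e)$ has its zeros away from the contact points. To finish I would exploit the matching-residue (node) conditions satisfied by $s$ at the contact points $p_{vw}$ coming from the other components of $Z$, and argue via the genericity of the gluing that these conditions pin the zeros of $s$ to the contact points rather than to $E_{v,\mathrm{reg}}$. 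Making this last step precise — translating analytic genericity into the exact base locus of the restricted residue system in the low-dimensional regime — is the crux where the proof really has to work.
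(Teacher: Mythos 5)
First, a point of order: the paper does not prove this lemma at all --- it is recalled verbatim from \cite{H} as background, so there is no in-paper proof to compare you against; what can be compared is your argument on its own terms, and against the machinery the paper itself uses for the closely analogous statement it \emph{does} prove (Lemma \ref{ind} in Section 9). Your reductions are largely sound: the equivalence of ``$p$ is not a base point of $\calL_Z$'' with $h^0(Z,\calO_Z(p))=1$ via Serre duality, the degree computation $\deg\calL_Z|_{E_v}=\sum_{w\sim v}Z_w-2$, and the identification $h^1\bigl(Z',\calL_Z(-E_v)|_{Z'}\bigr)=h^0(\calO_{Z'})$ for $Z'=Z-E_v$ are all correct. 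But two side claims are asserted without justification: $h^0(\calO_Z)=1$ does \emph{not} follow from connectedness for a non-reduced $Z$ --- it needs $H^0(Z,\calL_Z)_{reg}\neq\emptyset$ together with genericity, which forces $\chi(Z)<\chi(l)$ for all $0<l<Z$ and hence $h^0(\calO_Z)=\chi(Z)+h^1(\calO_Z)=1$; and $h^0(\calO_{Z'})=\delta_v$ requires each (generally non-reduced) connected component of $Z'$ to carry only constant sections, which is not automatic, and where it fails your codimension count $\dim W=\sum_{w\sim v}(Z_w-1)$ is wrong.

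The genuine gap, however, is the genericity step, which is exactly where the lemma lives. In the case $\dim W\geq 2$ your argument is circular: every member of $W$ vanishes on the base locus of $W$, so ``two general members of $W$ are coprime'' is not something genericity of the gluing hands you --- it is a restatement of the base-point-freeness to be proven, and neither Theorem \ref{th:OLD} nor Lemma \ref{baseI} supplies it (the latter controls only the contact points). In the case $\dim W=1$ you concede outright that the argument is missing. So the proposal reduces the lemma correctly to its hard core and stops precisely there. For contrast, the proof pattern used in this circle of results (see the paper's proof of Lemma \ref{ind}, which it attributes to the same toolkit as \cite{H}) is not residue bookkeeping on $E_v\cong\bP^1$: one argues by induction on $h^1(\calO_Z)$, blows up at a hypothetical base point $p$, and compares cohomology before and after by placing the bundle in a relatively generic family, where relative dominance of the Abel map $\eca^{l',\mfl}(Z)\to r^{-1}(\mfl)$ (Theorems \ref{th:dominantrel} and \ref{relgen}) forces the generic member to have the smaller $h^1$; the persistence of a base point on an open set of divisors then contradicts submersivity/dominance, and monodromy and moving-cut arguments handle the low-dimensional configurations that your residue-matching idea would have to treat by hand. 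Without an argument of this kind (or some substitute), the crucial step of your proposal remains an assertion, not a proof.
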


In the following let's recall the two main theorems from \cite{H} about hyperelliptic involutions on generic normal surface singularities:

We consider an integer effective cycle $Z$ on the resolution $\tX$ and investigate the existence of a complete linear series $g_2^1$ on it, we have the following two main theorems:

\begin{theorem*}\cite{H}\label{hyper1}
Let's have an arbitrary resolution graph $\mathcal{T}$ and a generic resolution $\tX$ corresponding to it, and let's have an effective integer cycle $Z \geq E$ such that 
$H^0(\calO_Z(K+Z))_{reg} \neq \emptyset$ and two vertices $u', u''$, such that $Z_{u'} = Z_{u''} = 1$ and assume that $e_Z(u', u'') \geq 3$.

With these conditions for every line bundle $\calL \in \im(c^{-E_{u'}^* - E_{u''}^*}(Z))$ one has $h^0(Z, \calL) = 1$.
\end{theorem*}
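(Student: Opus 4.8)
The plan is to reformulate the equality $h^0(Z,\calL)=1$ through Serre--Grothendieck duality on the (Cohen--Macaulay) curve $Z$ and to read it as a geometric Riemann--Roch statement about the canonical system $|\calO_Z(K+Z)|$. Writing $l'=-E^*_{u'}-E^*_{u''}$ we have $(l',E_{u'})=(l',E_{u''})=1$ and $(l',E_v)=0$ otherwise, so $(l',Z)=Z_{u'}+Z_{u''}=2$; then (\ref{eq:dimfiber}) gives $\dim c^{-1}(\calL)=2+h^1(Z,\calL)-h^1(\calO_Z)=h^0(Z,\calL)-1$, whence $h^0(Z,\calL)=1$ is equivalent to $h^1(Z,\calL)=h^1(\calO_Z)-2$. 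Since the dualizing sheaf of $Z$ is $\omega_Z=\calO_Z(K+Z)$, duality yields $h^1(\calO_Z)=h^0(Z,\omega_Z)$ and, for $\calL=\calO_Z(D)$ with $D\in\eca^{l'}(Z)$ a representative of the fiber, $h^1(Z,\calL)=h^0(Z,\omega_Z(-D))$ (this number depends only on $\calL$, not on the representative). Thus the task is to show that the length-two divisor $D$, carrying one point $p'\in E_{u'}$ and one point $p''\in E_{u''}$, imposes exactly two independent conditions on $H^0(Z,\omega_Z)$.

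First I would dispose of the individual conditions. Because $H^0(\calO_Z(K+Z))_{reg}\neq\emptyset$ and $Z_{u'}=Z_{u''}=1$, Lemma \ref{baseII} shows that $\calL_Z=\calO_Z(K+Z)$ has no base point on the regular parts of $E_{u'}$ and $E_{u''}$, while Lemma \ref{baseI} removes the intersection points; choosing $D$ so that $p',p''$ lie in these regular loci, each of the two vanishing conditions is individually nontrivial. The essential point is then the independence of the two conditions, i.e. the existence of a form in $H^0(Z,\omega_Z)$ vanishing at $p'$ but not at $p''$.

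For this I would pass to the linearised picture. By Theorem \ref{th:DUALVO} the space $V_Z(\{u',u''\})\subset H^1(\calO_Z)$, of dimension $e_Z(u',u'')\geq 3$ by (\ref{eq:ezl}), is Laufer-dual to $\Omega_Z(\{u',u''\})$; the differential of $c^{l'}(Z)$ at $D$ factors as $dc_D\colon T_{p'}E_{u'}\oplus T_{p''}E_{u''}\to V_Z(\{u',u''\})$, and a positive-dimensional fiber forces $\rank dc_D\leq 1$ at every point of that fiber, i.e. the evaluation functionals $\omega\mapsto\omega(p')$ and $\omega\mapsto\omega(p'')$ on $\Omega_Z(\{u',u''\})$ become proportional. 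I would then argue that such a rank-one degeneracy propagating along the whole fiber is precisely a hyperelliptic pairing $E_{u'}\to E_{u''}$, and that, because the tangent directions of the iterated Abel maps $c^{nl'}(Z)$ generate $V_Z(\{u',u''\})$, this pairing collapses $\Omega_Z(\{u',u''\})$ onto a space of dimension at most two, contradicting $e_Z(u',u'')\geq 3$.

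The main obstacle is precisely this last step: converting the rank-one degeneracy of $dc_D$ along a one-parameter family into the bound $e_Z(u',u'')\leq 2$. Equivalently, one must prove that the evaluation map $\Omega_Z(\{u',u''\})\to \omega_Z|_{p'}\oplus\omega_Z|_{p''}$ is surjective whenever $e_Z(u',u'')\geq 3$, ruling out the hyperelliptic coincidence. I expect this to require the fine structure of $\Omega_Z(\{u',u''\})$ under restriction to $E_{u'}\cup E_{u''}$, and I would try to secure it by an inductive reduction on $Z$ using the relatively generic machinery of Theorem \ref{relgen}, splitting off the two components $E_{u'},E_{u''}$ and tracking how $h^1$ and the attached forms behave, so that the genericity of $\tX$ forces the maximal (rank-two) separation once three independent forms are available.
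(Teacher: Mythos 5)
You should first note a point of comparison that matters here: this paper contains no proof of the statement at all --- it is recalled verbatim from \cite{H} and used as a black box (it enters the proof of part 3) of the main theorem exactly to exclude the coincidence $h^0(\calO_Z(d_{u',1}+d_{u'',1}))=2$). So your attempt can only be judged on its own merits. Your reduction is fine as far as it goes: with $l'=-E^*_{u'}-E^*_{u''}$ one has $(l',Z)=2$, and the fiber-dimension formula plus Serre duality convert $h^0(Z,\calL)=1$ into the assertion that the two points impose independent conditions on $H^0(Z,\calO_Z(K+Z))$, with Lemmas \ref{baseI} and \ref{baseII} disposing of the two individual conditions. (One loose end: $\dim c^{-1}(\calL)=h^0(Z,\calL)-1$ uses $h^0(\calO_Z)=1$, which is not automatic for non-reduced cycles; it does follow here because $H^0(\calO_Z(K+Z))_{reg}\neq\emptyset$ forces the cohomology cycle of $\calO_Z$ to be $Z$ itself, so $h^1(\calO_Z)=1-\chi(Z)$ by Theorem \ref{th:OLD}(a) --- but you should say this.) The trouble is that this reduction is essentially a restatement of the theorem --- it is exactly the exclusion of a $g^1_2$ --- so all of the content sits in the step you yourself flag as the ``main obstacle'' and do not prove.

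That gap is genuine, and moreover the mechanism you sketch for closing it would fail as literally stated. A rank-one degeneracy of $dc_D$ along a positive-dimensional fiber does \emph{not} ``collapse $\Omega_Z(\{u',u''\})$ onto a space of dimension at most two'': the smooth-curve analogue already refutes this, since a hyperelliptic curve of any genus $g\geq 3$ exhibits precisely this degeneracy along the fiber of the degree-two Abel map over the $g^1_2$ (evaluation functionals at conjugate points are proportional there), while $h^0(K_C)=g$ is as large as one likes --- the degeneracy is confined to one special fiber and imposes no linear-algebraic bound on $\dim V_Z(\{u',u''\})$. Note also that the negation of the theorem supplies only a \emph{single} bad bundle $\calL$ with $h^0\geq 2$; generic fibers of $c^{l'}(Z)$ and of its multiples $c^{nl'}(Z)$ are untouched by it, so the fact that the iterated Abel maps generate $V_Z(\{u',u''\})$ gives you no contradiction. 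The hypothesis $e_Z(u',u'')\geq 3$ can therefore only do its work through the genericity of the analytic structure --- the true analogue of the classical fact that a \emph{generic} curve of genus $\geq 3$ is not hyperelliptic, which is proved by moduli or degeneration arguments, not by linear algebra at one divisor. Your closing suggestion (an inductive reduction via the relatively generic machinery of Theorem \ref{relgen}) is plausibly the right direction, and presumably close to what \cite{H} actually does, but no such argument is carried out; as it stands the proposal is a correct reformulation plus an unproven --- and, in the form stated, incorrect --- key step.
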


\begin{theorem*}\cite{H}\label{hyper2}
Let's have an arbitrary resolution graph $\mathcal{T}$ and a generic resolution $\tX$ corresponding to it, and let's have an effective integer cycle $Z \geq E$, such that 
$H^0(\calO_Z(K+Z))_{reg} \neq \emptyset$ and a vertex $u \in \calv$, such that $Z_{u} = 1$.

Assume furthemore that $e_Z(u) \geq 3$, with these conditions for every line bundle $\calL \in \im(c^{-2E_{u}^*}(Z))$ one has $h^0(Z, \calL) = 1$.
\end{theorem*}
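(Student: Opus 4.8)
The plan is to convert the cohomological equality $h^0(Z,\calL)=1$ into a geometric separation statement for the dualizing system and then extract that statement from $e_Z(u)\geq 3$. Fix $\calL\in\im(c^{-2E_u^*}(Z))$ and write $\calL=\calO_Z(D)$ for an effective Cartier divisor $D$ with $c(D)=\calL$. Since $\deg_{E_w}\calL=(-2E_u^*,E_w)=2\delta_{uw}$ and $Z_u=1$, the divisor $D$ is disjoint from every $E_w$ with $w\neq u$ and meets $E_u$ in a length-two scheme; hence $D$ is a length-two subscheme of $E_{u,\rr}$, i.e.\ two (possibly infinitely near) points of $E_u\cong\bP^1$. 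Now $\calL_Z=\calO_Z(K+Z)$ is the dualizing sheaf of the Gorenstein cycle $Z$, so Serre duality gives $h^0(Z,\calL_Z)=h^1(\calO_Z)$ and $h^1(Z,\calL)=h^0(Z,\calL_Z(-D))$. Feeding this into Riemann--Roch $h^0(Z,\calL)-h^1(Z,\calL)=\chi(\calO_Z)+2$ collapses everything to $h^0(Z,\calL)=3-c$, where $c\in\{1,2\}$ is the number of conditions $D$ imposes on $|\calL_Z|$. Thus the theorem is equivalent to: every such $D$ imposes two independent conditions on $|\calL_Z|$, i.e.\ the canonical map $\phi$ defined by $|\calL_Z|$ separates the two points of $D$ (and their tangent directions when $D$ is non-reduced).

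Second, I would make $\phi$ explicit on $E_u$. The hypothesis $H^0(Z,\calL_Z)_{\rr}\neq\emptyset$ together with Lemma \ref{baseI} and Lemma \ref{baseII} --- the latter applying exactly because $Z_u=1$ --- shows $\calL_Z$ has no base point on $E_{u,\rr}$ nor at the intersection points, so $\phi$ restricts to a morphism $E_u\cong\bP^1\to\bP(W^*)$, where $W=\im\big(H^0(Z,\calL_Z)\to H^0(E_u,\calL_Z|_{E_u})\big)$. From the restriction sequence $0\to \calL_Z\otimes\cali_{E_u}\to\calL_Z\to\calL_Z|_{E_u}\to 0$, identifying the kernel with the dualizing sheaf of $Z'=Z-E_u$ (whose $h^0$ is $h^1(\calO_{Z'})$), one obtains $\dim W=h^1(\calO_Z)-h^1(\calO_{Z'})=e_Z(u)$, while $\calL_Z|_{E_u}=\calO_{E_u}(t_u)$ with $t_u=(-Z_K+Z,E_u)$. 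So the required statement is precisely that the degree-$t_u$, $e_Z(u)$-dimensional system $W$ on $\bP^1$ separates every length-two subscheme.

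In the principal case $Z'$ connected one computes $t_u=e_Z(u)-1\geq 2$ and $\dim W=t_u+1$, so $W$ is the complete system $H^0(\calO_{\bP^1}(t_u))$ and $\phi|_{E_u}$ is a rational normal curve of degree $t_u\geq 2$. For such a system the conditions imposed by a length-two $D$ number $h^0(\calO(t_u))-h^0(\calO(t_u-2))=(t_u+1)-(t_u-1)=2$; hence $c=2$ and $h^0(Z,\calL)=1$ for every $\calL$ in the image, as claimed. Note that it is exactly the inequality $e_Z(u)\geq 3$ that forces $t_u\geq 2$, so that the image is an honest (non-degenerate) rational normal curve.

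The main obstacle is the general case, when $Z-E_u$ is disconnected: then $W$ is a proper subsystem of $H^0(\calO_{\bP^1}(t_u))$ of codimension $(\#\{\text{components of }Z'\}-1)$, and $2$-very-ampleness no longer follows formally --- an incomplete base-point-free system of projective dimension $\geq 2$ can still collapse a secant line or a tangent, which would produce a divisor $D$ with $h^0(Z,\calL)=2$. This is exactly where the generic analytic structure enters essentially: one must show that the particular $W$ cut out by the canonical sections of a generic $\tX$ avoids every secant/tangent degeneration once $e_Z(u)\geq 3$. I would try to reach this either by contracting the components of $Z'$ adjacent to $E_u$ and controlling the lost sections through the relative machinery of Theorem \ref{relgen}, or by a direct genericity argument that the evaluation $H^0(Z,\calL_Z)\to H^0(\calO_D)$ is surjective for every length-two $D\subset E_{u,\rr}$. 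Proving this uniform separation is the crux, and the step where I expect to lean hardest on the genericity hypotheses of \cite{H}; the bookkeeping for non-reduced $D$ and for the potential base point of a hypothetical pencil is then routine.
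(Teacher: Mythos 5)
First, a point of context: this paper does not prove the statement at all --- it is recalled verbatim from \cite{H} as an input to Section 9 --- so there is no internal proof to compare against; your proposal has to stand on its own, and it does not. Your reduction is correct and well executed: writing $\calL=\calO_Z(D)$ with $D$ a length-two subscheme of $E_{u,reg}$ (using $Z_u=1$), Serre duality plus Riemann--Roch turn the claim into ``$D$ imposes two independent conditions on $H^0(Z,\calO_Z(K+Z))$'', the kernel of restriction to $E_u$ is indeed $\calO_{Z-E_u}(K+Z-E_u)$, whence $\dim W=h^1(\calO_Z)-h^1(\calO_{Z-E_u})=e_Z(u)$ by (3.3.7), and base-point-freeness on $E_u$ comes from Lemmas \ref{baseI} and \ref{baseII}. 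But the theorem is precisely the assertion that this $W$ separates \emph{every} length-two scheme, and your argument establishes that only when $W$ is the complete system. In the incomplete case nothing formal is available --- a base-point-free $W\subsetneq H^0(\calO_{\bP^1}(t_u))$ with $\dim W\geq 3$ can collapse a secant or a tangent (e.g.\ the span of $x^4,x^2y^2,y^4$ separates neither $(1{:}1),(1{:}-1)$ nor tangent directions at $(1{:}0)$) --- and you explicitly leave this case as two untried strategies. Note that up to that point your argument never actually uses genericity of $\tX$; since the statement is \emph{false} for special analytic structures (hyperelliptic-type pencils do occur, which is the whole subject of \cite{H}), any completion must invoke the generic structure in an essential way. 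The analogous separation claims that this paper does prove in Section 9 (via Lemma \ref{ind}, the relative machinery of Theorem \ref{relgen}, blowing up a putative failure point and contradicting dominance of $\eca^{l',\mfl}(Z)\to r^{-1}(\mfl)$, and monodromy transitivity of the arrows of a generic form) indicate the scale of what is missing: the crux you defer is the entire content of the theorem.

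Two secondary inaccuracies, both repairable. First, your dichotomy is misdrawn: from $\chi(\calO_Z)-\chi(\calO_{Z-E_u})=1-(Z-E_u,E_u)$ and $t_u=(Z-E_u,E_u)-2$ one gets $\dim W=t_u+1-\bigl(h^0(\calO_{Z-E_u})-h^0(\calO_Z)\bigr)$, so completeness of $W$ is governed by $h^0(\calO_{Z-E_u})=h^0(\calO_Z)$, not by connectedness of $Z-E_u$, and the codimension of $W$ is $h^0(\calO_{Z-E_u})-h^0(\calO_Z)$ rather than the number of components minus one; connected non-reduced cycles can have $h^0>1$ (e.g.\ $h^0(\calO_{3E})=6$ for a $(-1)$-curve $E$), so even your ``principal case'' needs the $h^0$ hypothesis stated. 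Second, your formula $h^0(Z,\calL)=3-c$ silently assumes $h^0(\calO_Z)=1$; for generic $\tX$ this does follow from $H^0(\calO_Z(K+Z))_{reg}\neq\emptyset$ (which forces $\chi(Z)=\min_{0<l\leq Z}\chi(l)$, and then Theorem \ref{th:OLD}(a) gives $h^0(\calO_Z)=\chi(Z)+h^1(\calO_Z)=1$), but it must be said. With these repairs your text is a clean reduction of the theorem to the uniform two-point separation statement --- but since that statement is proved only when $W$ is complete, the proposal has a genuine gap and does not prove the theorem.
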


\section{Differential forms and fibration theorem}

In this subsection we recall some nessecary background from \cite{NNAD} and we use it to reduce the investigation of the invariant $\tau$ in the case of 
images of Abel maps of normal surface singularities to some special cases.

Let's have a normal surface singularity and an effective integer cycle $Z$ on it, and a Chern class $l' \in -S'$, in the following we denote $d_{Z, l'} = \dim(\im(c^{l'}(Z))$.

Let's recall the following  theorem about the numbers $d_{Z}(l')$ from \cite{NNAD}:

\begin{theorem}\cite{NNAD}\label{imdim}
For $l' \in - S'$ and $Z \geq E$ one has:
\begin{equation}
d_{Z}(l') = \min_{0 \leq Z_1 \leq Z} ((l', Z_1) + h^1(\calO_Z) - h^1(\calO_{Z_1})).
\end{equation}
\end{theorem}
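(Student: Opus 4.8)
The plan is to first express $d_Z(l')$ through the cohomology of a generic line bundle in the image, and then to compute that cohomology combinatorially; the two resulting inequalities require rather different arguments.

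Since $\eca^{l'}(Z)$ is smooth and irreducible of dimension $(l',Z)$ and $c^{l'}(Z)$ is surjective onto the irreducible variety $\im(c^{l'}(Z))$, applying the fiber dimension formula (\ref{eq:dimfiber}) to a generic $\calL$ in the image (where, by semicontinuity, $h^1(Z,\calL)$ is minimal, so the fiber is smallest) gives
$$d_Z(l')=(l',Z)-\dim\,(c^{l'}(Z))^{-1}(\calL)=(l',Z)-\big((l',Z)+h^1(Z,\calL)-h^1(\calO_Z)\big)=h^1(\calO_Z)-h^1(Z,\calL).$$
Hence everything reduces to proving, for $\calL$ generic in the image,
$$h^1(Z,\calL)=\max_{0\leq Z_1\leq Z}\big(h^1(\calO_{Z_1})-(l',Z_1)\big)=:M.$$

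For the bound $h^1(Z,\calL)\geq M$ (which yields $d_Z(l')\leq\min_{Z_1}(\cdots)$) I would fix an arbitrary $0\leq Z_1\leq Z$ and use the restriction sequence $0\to\calL(-Z_1)|_{Z-Z_1}\to\calL\to\calL|_{Z_1}\to 0$. As $Z-Z_1$ and $Z_1$ are one-dimensional, this produces a surjection $H^1(Z,\calL)\twoheadrightarrow H^1(Z_1,\calL|_{Z_1})$, so $h^1(Z,\calL)\geq h^1(Z_1,\calL|_{Z_1})$. Riemann--Roch on $Z_1$ rewrites the right side as $h^0(Z_1,\calL|_{Z_1})-h^0(\calO_{Z_1})+h^1(\calO_{Z_1})-(l',Z_1)$, and since $\calL\in\im(c^{l'}(Z))$ carries a section with no fixed components, its restriction to each connected component of $Z_1$ vanishes only at points; summing over components yields $h^0(Z_1,\calL|_{Z_1})\geq h^0(\calO_{Z_1})$, hence $h^1(Z,\calL)\geq h^1(\calO_{Z_1})-(l',Z_1)$. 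Taking the maximum over $Z_1$ closes this half, and it holds for every $\calL$ in the image, not just the generic one.

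The reverse inequality $h^1(Z,\calL)\leq M$ for generic $\calL$ in the image is the main obstacle, and is where independence of the analytic structure becomes a genuine constraint: one cannot simply invoke the generic-analytic cohomology of Theorem \ref{th:OLD}. I would instead extract the generic-in-image value of $h^1$ from the relative dominant theory, which by Theorems \ref{th:dominantrel} and \ref{th:hegy2rel} is available for an arbitrary singularity $\tX$. Concretely, one selects a subcycle $Z_1$ realizing $M$ together with a line bundle $\mfl$ on $Z_1$ for which $(l',\mfl)$ is relative dominant; then generic bundles in the fiber $r^{-1}(\mfl)$ lie in the image, and the formula of Theorem \ref{th:hegy2rel}, after minimizing $\chi(-l'+l)-h^1((Z-l)_1,\mfl(-l))$ over $0\leq l\leq Z$ and using $\chi(-l')-\chi(-l'+l)=-(l',l)-\chi(l)$, collapses exactly to $M$. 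The delicate point — the crux of the whole argument — is the passage from ``generic in a fiber of $r$'' to ``generic in the image of the total Abel map,'' i.e. verifying that the extremal pair $(Z_1,\mfl)$ is relative dominant and that the resulting fiber value is the minimal value of $h^1$ over $\im(c^{l'}(Z))$. Equivalently, via Laufer duality (Theorem \ref{th:DUALVO}) one must show that for a generic divisor $D\in\eca^{l'}(Z)$ the space of $2$-forms in $\Omega_Z$ vanishing along $D$ has the dimension dictated by the extremal subcycle, so that the differential of the Abel map at $D$ attains rank $h^1(\calO_Z)-M$.
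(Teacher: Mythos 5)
You should first note that this paper contains no proof of the statement: Theorem \ref{imdim} is recalled verbatim from \cite{NNAD}, so your attempt must be measured against the argument given there. Your first two steps are correct and match the source. The reduction $d_Z(l')=h^1(\calO_Z)-h^1(Z,\calL)$ for $\calL$ generic in the image follows, as you say, from irreducibility of $\eca^{l'}(Z)$, the fiber formula (\ref{eq:dimfiber}) and semicontinuity of $h^1$; and your proof of $h^1(Z,\calL)\geq h^1(\calO_{Z_1})-(l',Z_1)$ for \emph{every} $\calL$ in the image is sound: $H^1(Z,\calL)\to H^1(Z_1,\calL|_{Z_1})$ is onto because the kernel sheaf sits on a one-dimensional space, and multiplication by the restricted section is injective since a non-zerodivisor of $\calO_{Z,p}$ (a local equation not divisible by the equation of $E$) remains a non-zerodivisor in the quotient $\calO_{Z_1,p}$. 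This yields $d_Z(l')\leq\min_{0\leq Z_1\leq Z}\bigl((l',Z_1)+h^1(\calO_Z)-h^1(\calO_{Z_1})\bigr)$, which is also how the source obtains that inequality.

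The genuine gap is that the reverse inequality is never proved: you explicitly defer what you yourself call the crux, and the route you sketch is circular as stated. To apply Theorems \ref{th:dominantrel} and \ref{th:hegy2rel} you must (i) choose $\mfl$ on the extremal subcycle $Z_1$ generic in $\im(c^{R_1(l')}(Z_1))$ with $h^1(Z_1,\mfl)$ equal to its combinatorial value --- but that is precisely an instance of the theorem for the smaller cycle, so an induction would have to be set up, and you never do; (ii) verify the dominance criterion of Theorem \ref{th:dominantrel}(2), whose right-hand side involves $h^1((Z-l)_1,\mfl(-l))$ for all $0<l\leq Z$ --- quantities you cannot evaluate without, again, the statement being proved; and (iii) even granting dominance, these theorems control generic bundles in a fixed fiber $r^{-1}(\mfl)$, and one still has to see that such bundles compute the \emph{minimal} $h^1$ over the whole image, which is an assertion, not a consequence, of your setup. (Note also that the relative machinery of \cite{R} postdates \cite{NNAD}, which does not use it.) The actual proof in \cite{NNAD} establishes the hard half by the dual computation you name in your closing sentence: via Laufer duality (Theorem \ref{th:DUALVO}) the corank of the tangent map of $c^{l'}(Z)$ at a generic $D$ is the dimension of the space of forms in $H^0(\calO_{\tX}(K+Z))/H^0(\calO_{\tX}(K))$ with no pole along $D$, and one shows that the points of a generic $D$ impose the maximal possible number of independent conditions, the defect being organized exactly by the pole orders of forms along subcycles $Z_1$, which produces the term $h^1(\calO_{Z_1})-(l',Z_1)$. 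That independence-of-conditions argument is the missing idea; as written, your proposal proves only one of the two inequalities.
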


Let's recall also the following results from \cite{NNAD}:

\begin{lemma}\label{lem:MINSETS}\cite{NNAD}
The following three sets of cycles coincide (for fixed $Z\geq E$ and $l'\in-\calS'$ as above):

(I) the set of cycles $Z_1$ with $0\leq Z_1\leq Z$ realizing the minimality in Theorem\ref{NNAD}, that is: $d_Z(l')=(l',Z_1)+h^1(\calO_Z)-h^1(\calO_{Z_1})$.

(II) the set of cycles $Z_1$ with $0\leq Z_1\leq Z$ such that
 {\it (i)} the map
$\eca^{l'}(Z) \to H^1(Z_1)$ is birational onto its image,
and {\it (ii) }the generic fibres of the restriction of $r$,
$r^{im}:\im(c^{l'}(Z)) \to \im(c^{l'}(Z_1))$,  have dimension $h^1(\calO_{Z})-h^1(\calO_{Z_1})$.
(That is, the fibers of $r^{im}$   have maximal possible dimension.)

(III)  the set of cycles $Z_1$ with $0\leq Z_1\leq Z$ such that  for the generic element
$\calL_{gen}^{im}\in\im (c^{l'}(Z))$  and arbitrary section $s\in H^0(Z_1, \calL_{gen}^{im})_{reg}$ with divisor $D$
{\it (i)} in the (analogue of the
Mittag-Lefler sequence associated with the exact sequence $0\to \calO_{Z_1}\stackrel{\times s}{\longrightarrow} \calL_{gen}^{im} \to \calO_D\to 0$, cf. \cite[3.2]{NNI}),
$$0\to H^0(\calO_{Z_1})\stackrel{\times s}{\longrightarrow} H^0(Z_1,\calL_{gen}^{im})
\to \bC^{(Z_1,l')}
\stackrel{\delta}{\longrightarrow} H^1(\calO_{Z_1})\to h^1(Z_1,\calL_{gen}^{im})\to 0$$
$\delta$ is injective, and {\it (ii)} $h^1(Z,\calL_{gen}^{im})=h^1(Z_1,\calL_{gen}^{im})$.
\end{lemma}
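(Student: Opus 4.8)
The plan is to route all three descriptions through a single dimension identity and a short chain of inequalities, so that (II) and (III) each become the assertion that specific inequalities are equalities, matching membership in (I). First I would fix a generic $\calL^{im}_{gen}\in\im(c^{l'}(Z))$ and record the central identity
\[
d_Z(l')=\dim\eca^{l'}(Z)-\dim c^{-1}(\calL^{im}_{gen})=h^1(\calO_Z)-h^1(Z,\calL^{im}_{gen}),
\]
which follows from $\dim\eca^{l'}(Z)=(l',Z)$ and the fibre-dimension formula (\ref{eq:dimfiber}).

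Next I would record two inequalities valid for every $0\le Z_1\le Z$. From the restriction sequence $0\to\calL^{im}_{gen}(-Z_1)|_{Z-Z_1}\to\calL^{im}_{gen}|_Z\to\calL^{im}_{gen}|_{Z_1}\to 0$, whose left-hand term is supported on a curve and so has vanishing $H^2$, the map $H^1(Z,\calL^{im}_{gen})\to H^1(Z_1,\calL^{im}_{gen})$ is onto, giving $h^1(Z,\calL^{im}_{gen})\ge h^1(Z_1,\calL^{im}_{gen})$. From the Mittag--Leffler sequence in (III) one has $h^1(Z_1,\calL^{im}_{gen})=h^1(\calO_{Z_1})-\rank\delta$ with $\rank\delta\le (Z_1,l')$, giving $h^1(Z_1,\calL^{im}_{gen})\ge h^1(\calO_{Z_1})-(l',Z_1)$. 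Together these assemble into the chain
\[
h^1(\calO_{Z_1})-(l',Z_1)\ \le\ h^1(Z_1,\calL^{im}_{gen})\ \le\ h^1(Z,\calL^{im}_{gen}).
\]
On the other hand, the central identity combined with the bound $d_Z(l')\le (l',Z_1)+h^1(\calO_Z)-h^1(\calO_{Z_1})$ of Theorem \ref{imdim} shows that $Z_1\in\mathrm{(I)}$ holds exactly when $h^1(Z,\calL^{im}_{gen})=h^1(\calO_{Z_1})-(l',Z_1)$, i.e. exactly when the top and bottom of the chain agree and the whole chain collapses to equalities.

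From here $\mathrm{(I)}=\mathrm{(III)}$ is immediate: the bottom equality $h^1(Z_1,\calL^{im}_{gen})=h^1(\calO_{Z_1})-(l',Z_1)$ is $\rank\delta=(Z_1,l')$, i.e. the injectivity of $\delta$ in (III)(i), while the top equality $h^1(Z_1,\calL^{im}_{gen})=h^1(Z,\calL^{im}_{gen})$ is (III)(ii); collapsing the chain forces both, and either clause alone is insufficient, which is why (III) carries two parts. For $\mathrm{(I)}=\mathrm{(II)}$ I would pass to images. The generic fibre of $r\circ c^{l'}(Z)$ over $\mfl=\calL^{im}_{gen}|_{Z_1}$ is the relative space $\eca^{l',\mfl}(Z)$, of dimension $h^1(Z_1,\mfl)-h^1(\calO_{Z_1})+(l',Z)$ by the theorem of \cite{R} recalled in Section 5, so $\dim r\big(\im(c^{l'}(Z))\big)=h^1(\calO_{Z_1})-h^1(Z_1,\calL^{im}_{gen})$. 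Subtracting this from $d_Z(l')$ gives the generic fibre dimension of $r^{im}$, and setting it equal to $h^1(\calO_Z)-h^1(\calO_{Z_1})$ (clause (II)(ii)) reproduces exactly the top equality, hence (III)(ii). Clause (II)(i) expresses that the Abel map is generically finite onto its image at the level of $Z_1$, equivalently that $h^1(Z_1,\calL^{im}_{gen})$ attains its floor $h^1(\calO_{Z_1})-(l',Z_1)$, which is the bottom equality, hence (III)(i).

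The cohomological core — the central identity and the chain — is routine, and it already delivers $\mathrm{(I)}=\mathrm{(III)}$ together with the equivalence of (II)(ii) with (III)(ii). The step I expect to be the main obstacle is (II)(i): turning ``birational onto its image'' into the floor condition requires controlling the generic fibre of the restricted Abel map through the dimension formula of \cite{R}, and then verifying by upper semicontinuity that a generic bundle of $r(\im(c^{l'}(Z)))$ is cohomologically generic inside the full image $\im(c^{l'}(Z_1))$ — that is, that $r^{im}$ is dominant onto $\im(c^{l'}(Z_1))$ precisely on the locus (I). Pinning down that these two notions of genericity coincide exactly there is where the real content of the lemma sits.
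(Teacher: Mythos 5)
Your cohomological core is correct, though note that this paper contains no proof of the lemma at all — it is recalled verbatim from \cite{NNAD} — so your argument can only be judged on its own terms. The central identity $d_Z(l')=h^1(\calO_Z)-h^1(Z,\calL_{gen}^{im})$, the chain $h^1(\calO_{Z_1})-(l',Z_1)\le h^1(Z_1,\calL_{gen}^{im})\le h^1(Z,\calL_{gen}^{im})$, and the fact that $\rank\delta=h^1(\calO_{Z_1})-h^1(Z_1,\calL_{gen}^{im})$ is independent of the chosen section $s$ (so ``arbitrary section'' is harmless) are all right, and they do deliver $(\mathrm{I})=(\mathrm{III})$ cleanly, as well as the equivalence of (II)(ii) with (III)(ii) via the dimension formula for $\eca^{l',\mfl}(Z)$ from \cite{R}.

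The genuine gap is exactly the one you flag and then leave open: the link between (II)(i) and (III)(i). Two things go wrong there. First, your closing diagnosis — that $r^{im}$ is dominant onto $\im(c^{l'}(Z_1))$ ``precisely on the locus (I)'' — is false: dominance holds for \emph{every} $0\le Z_1\le Z$, by an elementary lifting of divisors. A generic $D_1\in \eca^{R_1(l')}(Z_1)$ consists of transversal cuts at smooth points of $E\cap |Z_1|$; each cut lifts to a transversal cut of $Z$ at the same point, and completing with $(l',E_w)$ generic cuts on the divisors $E_w$, $w\notin |Z_1|$, disjoint from $|Z_1|$, yields $D\in\eca^{l'}(Z)$ with $\fr(D)=D_1$, hence $c^{R_1(l')}(Z_1)(D_1)=r(c^{l'}(Z)(D))$. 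So what characterizes (I) is the attainment of the floor and fibre bounds, not dominance; without this lifting step neither direction of your (II)(i) $\Leftrightarrow$ (III)(i) equivalence is actually proved, since you cannot pass between ``generic in $\im(c^{l'}(Z_1))$'' and ``restriction of a generic element of $\im(c^{l'}(Z))$''. (Once dominance is in hand, the whole lemma is bookkeeping: $d_Z(l')$ equals $d_{Z_1}(R_1(l'))$ plus the generic fibre dimension of $r^{im}$, with $d_{Z_1}(R_1(l'))\le (l',Z_1)$, equality being (II)(i), and the fibre dimension at most $h^1(\calO_Z)-h^1(\calO_{Z_1})$, equality being (II)(ii), so (II) $\Leftrightarrow$ (I) follows from the minimality statement.) Second, a caution on the statement itself: your ``at the level of $Z_1$'' reading of (II)(i) is the only tenable one, because the map as literally printed, $\eca^{l'}(Z)\to H^1(\calO_{Z_1})$, being birational onto its image would force its generic fibre $\eca^{l',\mfl}(Z)$ to be zero-dimensional, i.e. $h^1(Z_1,\mfl)=h^1(\calO_{Z_1})-(l',Z)$, which together with your chain forces $(l',Z-Z_1)=0$ — and this already fails for rational singularities, where $Z_1=0$ realizes the minimum while $(l',Z)>0$. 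Your proposal silently switches between these two readings (you compute with the composition $r\circ c^{l'}(Z)$ when treating (II)(ii), then invoke the level-$Z_1$ Abel map for (II)(i)); with the correct reading and the lifting argument above, the proof closes, but as written it establishes only $(\mathrm{I})=(\mathrm{III})$ and half of (II).
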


The lemma above has the following geometric interpretation from \cite{NNAD}:

\begin{theorem}\label{th:structure} {\bf (Structure  theorem)}\cite{NNAD}
Fix a resolution $\tX$, a cycle $Z\geq  E$  and a   Chern class $l'\in -\calS'$
as above.

(a) There exists an effective cycle $Z_1 \leq Z$, such that: {\it (i)} the map
$\eca^{l'}(Z) \to H^1(\calO_{Z_1})$ is birational onto its image,
and {\it (ii) }the generic fibres of the restriction of $r$,
$r^{im}:\im(c^{l'}(Z)) \to \im(c^{l'}(Z_1))$,  have dimension $h^1(\calO_{Z})-h^1(\calO_{Z_1})$.
(Cf. Lemma \ref{lem:MINSETS}(II).)

(b) In particular, for any such $Z_1$, the space $\im (c^{l'}(Z))$ is birationally
equivalent with an affine fibration with affine fibers of dimension
$h^1(\calO_{Z})-h^1(\calO_{Z_1})$ over $\eca^{l'}(Z_1)$.

(c) The set of effective cycles $Z_1$ with property as in {\it (a)} has a unique
minimal and a unique maximal element
denoted by $C_{min}(Z, l')$ and $C_{max}(Z, l')$.
Furthermore, $C_{min}(Z, l')$ coincides with the cohomology cycle of the pair
$(Z,\calL_{gen}^{im})$ (the unique minimal element of the set $\{0\leq Z_1\leq Z\,:\,
h^1(Z,\calL_{gen}^{im}) =h^1(Z_1,\calL_{gen}^{im})$) for the generic $\calL_{gen}^{im}\in
\im ( c^{l'}(Z))$.
\end{theorem}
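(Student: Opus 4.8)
The plan is to deduce all three parts from Lemma \ref{lem:MINSETS}, which already identifies the three relevant families of subcycles, so that what remains is to repackage that set-theoretic coincidence as the asserted geometric structure. First I would invoke Theorem \ref{imdim}: the number $d_Z(l')$ is the minimum over the bounded range $0\leq Z_1\leq Z$ of the expression $(l',Z_1)+h^1(\calO_Z)-h^1(\calO_{Z_1})$, so the minimizing family --- which is set (I) of Lemma \ref{lem:MINSETS} --- is nonempty. Since (I) coincides with (II), any such minimizer $Z_1$ is automatically a cycle satisfying properties {\it (i)} and {\it (ii)} of statement (a). This gives existence of $Z_1$ immediately, with no further work.

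\textbf{Part (b).} Fix $Z_1$ as in (a). I would use that the fibers of the Abel map are affine: by the discussion around (\ref{eq:dimfiber}) one has $c^{-1}(\calL)=H^0(Z,\calL)_{reg}/H^0(\calO_Z^*)$, a smooth irreducible quasiprojective fiber of controlled dimension. Using the commutative diagram (\ref{eq:diagr}) and the restriction $r$, I present $\im(c^{l'}(Z))$ as the total space of $r^{im}\colon \im(c^{l'}(Z))\to \im(c^{l'}(Z_1))$. Property {\it (ii)} says the generic fiber of $r^{im}$ is affine of dimension $h^1(\calO_Z)-h^1(\calO_{Z_1})$, and property {\it (i)} --- birationality of $\eca^{l'}(Z)\to H^1(\calO_{Z_1})$ --- identifies the base $\im(c^{l'}(Z_1))$ birationally with the image of $\eca^{l'}(Z_1)$. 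Assembling these two facts exhibits $\im(c^{l'}(Z))$ as birationally equivalent to an affine fibration over $\eca^{l'}(Z_1)$ with fibers of the stated dimension.

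\textbf{Part (c).} For the unique minimal element I would use characterization (III): a cycle lies in the family precisely when $h^1(Z,\calL_{gen}^{im})=h^1(Z_1,\calL_{gen}^{im})$ for the generic image bundle $\calL_{gen}^{im}\in\im(c^{l'}(Z))$. The set $\{\,0\leq Z_1\leq Z:\ h^1(Z,\calL_{gen}^{im})=h^1(Z_1,\calL_{gen}^{im})\,\}$ is closed under $\min$: from the exact sequence $0\to\calO_{\min(Z_1,Z_2)}\to\calO_{Z_1}\oplus\calO_{Z_2}\to\calO_{\max(Z_1,Z_2)}\to 0$ tensored with $\calL_{gen}^{im}$, together with monotonicity of $h^1$ under restriction, one forces $h^1$ to be constant on $\min(Z_1,Z_2)$ as well. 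Hence the family has a unique minimal element, which I call the cohomology cycle $C_{min}(Z,l')$, matching the last assertion of (c).

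\textbf{Expected obstacle.} The genuinely delicate point is the existence of the unique \emph{maximal} element $C_{max}(Z,l')$, i.e. that the optimal family is closed under $\max$. The min-closure above is the standard cohomology-cycle argument, but the same exact sequence does not yield max-closure, since $h^1$ control propagates downward under restriction rather than upward. I expect to need either a supermodularity (convexity) property of the function $Z_1\mapsto (l',Z_1)-h^1(\calO_{Z_1})$ along set (I), which would turn the minimizing family into a sublattice of $\{0\leq Z_1\leq Z\}$, or a direct vanishing argument controlling how $h^1(Z_1,\calL_{gen}^{im})$ behaves when an optimal cycle is enlarged. Establishing that both extremes remain optimal --- equivalently, that the family of Lemma \ref{lem:MINSETS} is a lattice --- is the crux of the proof.
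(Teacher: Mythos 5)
First, the point of comparison: this paper contains no proof of Theorem \ref{th:structure} at all --- it is quoted verbatim as background from \cite{NNAD} --- so your attempt can only be measured against the recalled ingredients, Theorem \ref{imdim} and Lemma \ref{lem:MINSETS}. Against that yardstick, your parts (a) and (b) are correct and are indeed the intended repackaging: the minimum in Theorem \ref{imdim} ranges over the finite set $\{0\leq Z_1\leq Z\}$, hence is attained, and the coincidence of sets (I) and (II) hands you (a); for (b), the fibres of $r:\pic^{l'}(Z)\to\pic^{l'}(Z_1)$ are affine spaces of dimension $h^1(\calO_Z)-h^1(\calO_{Z_1})$, a fibre of $r^{im}$ of that maximal dimension is a dense constructible subset of such an affine space, and property {\it (i)} lets you replace the base $\im(c^{l'}(Z_1))$ birationally by $\eca^{l'}(Z_1)$.

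Part (c) is where the genuine gaps lie. First, your Mayer--Vietoris sequence is written backwards: the correct sequence is
\begin{equation*}
0\to\calO_{\max(Z_1,Z_2)}\to\calO_{Z_1}\oplus\calO_{Z_2}\to\calF\to 0,
\end{equation*}
where $\calF=\calO/\bigl(\calO(-Z_1)+\calO(-Z_2)\bigr)$ is an extension of $\calO_{\min(Z_1,Z_2)}$ by a finite--length sheaf (supported where both cycles simultaneously exceed the minimum), so that $h^1(\calF)=h^1(\calO_{\min(Z_1,Z_2)})$. With your orientation the long exact sequence yields no lower bound on $h^1$ at the minimum, so ``one forces $h^1$ to be constant'' does not go through as written; with the corrected orientation, twisting by $\calL_{gen}^{im}$ and using surjectivity of restriction on $H^1$ gives $h^1(\min)\geq h^1(Z_1)+h^1(Z_2)-h^1(\max)$, which together with monotonicity salvages min--closure. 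Second, and more seriously, the ``crux'' you flagged and left open is in fact a two--line consequence of the \emph{same} untwisted sequence: it gives the supermodularity $h^1(\calO_{\max})+h^1(\calO_{\min})\geq h^1(\calO_{Z_1})+h^1(\calO_{Z_2})$, and since $Z_1\mapsto (l',Z_1)$ is linear, the set (I) of Lemma \ref{lem:MINSETS} of minimizers of $(l',Z_1)-h^1(\calO_{Z_1})$ is closed under both $\min$ and $\max$; being finite and nonempty, it has unique minimal and maximal elements, which is the whole first claim of (c), $C_{min}$ and $C_{max}$ at one stroke. Declaring this the crux and stopping means (c) is simply not proven. Third, your identification of the minimal element with the cohomology cycle conflates two sets: you run the min argument on $\{0\leq Z_1\leq Z:\ h^1(Z_1,\calL_{gen}^{im})=h^1(Z,\calL_{gen}^{im})\}$, which contains the family of (a) (that is condition {\it (ii)} of set (III)) but is a priori strictly larger, since you drop the injectivity of $\delta$ (condition {\it (i)} of set (III)). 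This yields only that the cohomology cycle is $\leq C_{min}$; the ``furthermore'' of (c) needs the converse, namely that the cohomology cycle itself realizes the minimum in Theorem \ref{imdim}, and your sketch nowhere addresses this.
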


In this article we want to investigate the invariants $\tau(\im (c^{l'}(Z)))$ in the cases, when the dual projective variety $(\overline{\im (c^{l'}(Z))})^{*}$  is a hypersurface.

By the results above we can assume that $Z = C_{min}(Z, l')$, indeed if $Z > C_{min}(Z, l')$ and $\overline{\im (c^{l'}(Z))} \neq \overline{\im (c^{l'}(C_{min}(Z, l')))}$, then
$\overline{\im (c^{l'}(Z))}$ is an affine fibration over $\overline{\im (c^{l'}(C_{min}(Z, l')))}$ with nontrivial fibers, and then $(\overline{\im (c^{l'}(Z))})^{*}$  is not a hypersurface.

So we can assume in the following that $Z = C_{min}(Z, l')$, which means in particular that the Abel map $c^{l'}(Z): \eca^{l'}(Z) \to \pic^{l'}(Z)$ is birational to its image and
if $\calL$ is a generic line bundle in $\im (c^{l'}(Z))$, then the cohomological cycle of $\calL$ is $Z$, so $h^1(Z_1, \calL | Z_1) < h^1(Z, \calL)$ for every integer cycle $0 \leq Z_1 < Z$.

Notice furthemore that if we investigate the ivariants $\tau(\im(c^{l'}(Z)))$, we can assume that $|Z|$ is connected.

Indeed assume otherwise that $|Z|$ is not connected and let the connected components of the cycle $|Z|$ be $|Z_1|, \cdots, |Z_i|$, where $Z = \sum_{1 \leq j \leq i} Z_j$.

We get immediately that $H^1(\calO_Z) = \oplus_{1 \leq j \leq i} H^1(\calO_{Z_j})$ and $\overline{\im(c^{l'}(Z))} = \oplus_{1 \leq j \leq i} \overline{\im(c^{l'}(Z_j))}$.

Now by lemma\ref{dir} we get that $\tau(\overline{\im(c^{l'}(Z'))}) =  \prod_{1 \leq j \leq i} \tau(\overline{\im(c^{l'}(Z_j))})$.

\section{The $\tau$ invariant of the varieties $\overline{\im(c^{l'}(Z))}$}

In the following we restrict our attention first only to generic singularities, we prove the following main theorem with the setup explained above:

\begin{theorem}
Let $\mathcal{T}$ be an arbitrary resolution graph and $\tX$ a generic singularity corresponding to it.
Let's have a Chern class $l' \in -S'$ and an integer effective cycle $Z \geq E$, such that $Z = C_{min}(Z, l')$, notice that this is 
a combinatorial condition computable from the resolution graph if the singularity is generic, and in particular we know that the map $ \eca^{l'}(Z) \to \im(c^{l'}(Z))$ is birational.
With these notations we have the following:

1) The dual projective variety of the projective clousure $\overline{\im(c^{l'}(Z))}$ has got dimension $h^1(\calO_Z)-1$.

2) Let's have the line bundle $\calL_Z = \calO_Z(K+Z)$, we have $H^0(Z, \calL_Z)_{reg} \neq \emptyset$ and it hasn't got base points at intersection points of exceptional divisors. Furthermore let's have a vertex $v \in |l'|_{*}$, so a vertex such that $(E_v, l') < 0$ , then the line bundle $\calL_Z$ hasn't got a base point on the exceptional divisor $E_v$.

3) For an arbitrary vertex $v \in \calv $ let's denote $t_v = (- Z_K+Z, E_v)$, with this notation we have got $\tau( \overline{\im(c^{l'}(Z))})= \prod_{v\in |l'|_{*}}   {t_v \choose (l', E_v)}$.
\end{theorem}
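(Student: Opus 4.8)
The plan is to translate $\tau$ into a count of sub-divisors of a generic canonical divisor and then evaluate that count combinatorially. Throughout write $V:=\pic^{l'}(Z)\cong H^1(\calO_Z)$ and $X:=\im(c^{l'}(Z))\subset V$; since $Z=C_{min}(Z,l')$, Theorem \ref{th:structure} and Lemma \ref{lem:MINSETS} give that $c^{l'}(Z)$ is birational onto $X$, so $\dim X=(l',Z)$ and, by Theorem \ref{th:structure}(c), the cohomology cycle of a generic $\calL\in X$ is $Z$ (in particular $h^1(Z,\calL)>0$, otherwise that cycle would be $0\neq Z$). By the reduction preceding this theorem (Lemma \ref{dir}) we may assume $|Z|$ is connected. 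I would first dispatch part 2): the bundle $\calL_Z=\calO_Z(K+Z)$ is the dualizing sheaf of the Gorenstein curve $Z$, and $H^0(Z,\calL_Z)_{reg}\neq\emptyset$ together with base-point freeness at the intersection points of exceptional divisors follows from Lemma \ref{baseI}, while the absence of base points on $E_v$ for $v\in|l'|_{*}$ would be extracted from Lemma \ref{baseII} and the hyperelliptic results of \cite{H}, using that $v\in|l'|_{*}$ forces the generic element of $\eca^{l'}(Z)$ to meet $E_v$.

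The core is a tangent-space computation. At a smooth point $\calL=\calO_Z(D)$ of $X$, with $D={\rm div}(s)$ for $s\in H^0(Z,\calL)_{reg}$, the Mittag--Leffler sequence (cf.\ Lemma \ref{lem:MINSETS}(III))
\begin{equation*}
0\to H^0(\calO_Z)\to H^0(Z,\calL)\to \bC^{(l',Z)}\stackrel{\delta}{\longrightarrow} H^1(\calO_Z)\stackrel{\cdot s}{\longrightarrow} H^1(Z,\calL)\to 0
\end{equation*}
identifies $T_\calL X=\im\delta=\Ker\big(H^1(\calO_Z)\stackrel{\cdot s}{\longrightarrow}H^1(Z,\calL)\big)$. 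Dualizing and using Serre--Laufer duality on $Z$ (so that $H^1(\calO_Z)^{*}=H^0(Z,\calL_Z)=V^{*}$ and $H^1(Z,\calL)^{*}=H^0(Z,\calL_Z\otimes\calL^{-1})$, cf.\ Theorem \ref{th:DUALVO}), the annihilator of $T_\calL X$ becomes the image of multiplication by $s$,
\begin{equation*}
(T_\calL X)^{\perp}=s\cdot H^0(Z,\calL_Z\otimes\calL^{-1})\subseteq H^0(Z,\calL_Z).
\end{equation*}
Hence a covector $w\in V^{*}=H^0(Z,\calL_Z)$ annihilates $T_\calL X$ precisely when $w$ is divisible by $s$, i.e.\ when $D\le W:={\rm div}(w)$ as Cartier divisors on $Z$.

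This reduces $\tau$ to a counting problem. For a generic section $w$ of $\calL_Z$, using that $c^{l'}(Z)$ is birational (distinct $D$ give distinct $\calL$) and that for generic $w$ the finitely many such $\calL$ are smooth points of $X$, we get
\begin{equation*}
\tau(X)=\#\{\,D\in\eca^{l'}(Z)\ :\ D\le W\,\}.
\end{equation*}
Now part 2) enters: by base-point freeness the generic $W$ meets each $E_v$ with $v\in|l'|_{*}$ in exactly $t_v=(-Z_K+Z,E_v)$ distinct reduced points on the regular part of $E_v$, avoiding the intersection points $E_v\cap E_w$. A sub-divisor $D\le W$ with $c_1(D)=l'$ must consist, on each such $E_v$, of exactly $(l',E_v)$ of these points, and choosing them independently over $v\in|l'|_{*}$ yields $\prod_{v\in|l'|_{*}}\binom{t_v}{(l',E_v)}$ divisors, which is part 3). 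For $v\notin|l'|_{*}$ one has $(l',E_v)=0$, contributing the trivial factor $\binom{t_v}{0}=1$, which is exactly why base-point freeness is only required on the $E_v$ with $v\in|l'|_{*}$.

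Finally part 1) follows once this count is finite and strictly positive: positivity gives $\tau(X)>0$, which by the relation $cl(\overline{X})=\mu+\tau(X)$ and the characterization of Section 6 means $(\overline{X})^{*}$ is a hypersurface, i.e.\ has dimension $h^1(\calO_Z)-1$. Positivity amounts to $t_v\ge (l',E_v)$ for every $v\in|l'|_{*}$, which I would derive from the $C_{min}$ hypothesis ($h^1(Z,\calL)>0$ forces $H^0(Z,\calO_Z(K+Z-l'))\neq 0$, whose nonnegative degree on $E_v$ is $t_v-(l',E_v)$) together with part 2). I expect the main obstacle to be part 2) itself: Lemma \ref{baseII} guarantees base-point freeness on $E_v$ only when $Z_v=1$, so for thicker cycles one must control the base locus of $\calL_Z$ along $E_v$ directly, presumably by descending to a suitable subcycle $Z'\le Z$ on which the hyperelliptic dichotomy of \cite{H} applies and transferring the conclusion back to $Z$. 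A secondary technical point is the genericity argument certifying that all $\prod_{v\in|l'|_{*}}\binom{t_v}{(l',E_v)}$ divisors map to smooth points of $X$ and that no additional tangencies occur.
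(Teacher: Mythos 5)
Your duality step is sound, and it is in fact a cleaner way to see what the paper establishes by hand: the identification $(T_{\calL}X)^{\perp}=s\cdot H^0(Z,\calL_Z\otimes\calL^{-1})$, hence ``$w$ annihilates $T_{\calL}X$ iff $D\le \mathrm{div}(w)$'', is exactly the content the paper extracts from the Laufer integration/residue computation (one direction) and from Lemma~\ref{cut} (the converse, proved there by pole-order bookkeeping under iterated blowups). So your skeleton $\tau=\#\{D\in\eca^{l'}(Z):D\le W\}$ and the binomial count agree with the paper's strategy. The genuine gap is that the two items you set aside as ``secondary'' are the proof. For the equality in part 3) you must show that for generic $w$ \emph{every} choice of $(l',E_v)$ arrows among the $t_v$ arrows of $w$ yields a divisor $D$ such that $c^{l'}(Z)(D)$ is a smooth point of $X$ and a regular value of $c^{l'}(Z)$, with distinct choices giving distinct bundles; birationality controls only the generic fiber, not the finitely many special fibers you are counting, and a priori all these classes could sit in a deeper Brill--Noether stratum where no tangency is contributed. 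The paper devotes the bulk of Section 9 to precisely this: assuming the classes sweep out a too-small stratum, it invokes the tangent-space theorem for the strata $W_r(Z,l')$ via the multiplication map $\mu$, the base-point-free pencil trick (computing $\ker(\mu_2)$ as an $h^1$ on $Z_{new}-A$), the $\chi$-minimality inequalities coming from $Z=C_{min}(Z,l')$, and then a monodromy argument ($1$- and $2$-transitivity on the arrows, the family $F$ of minimal subsets) whose final contradiction rests on the two hyperelliptic theorems of \cite{H}. Without all of this your argument yields only the inequality $\tau\le\prod_{v}\binom{t_v}{(l',E_v)}$, i.e.\ the general-singularity theorem, not the equality.

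Part 2) for $Z_v>1$ is likewise not a transfer of Lemma~\ref{baseII}: the paper proves a separate inductive statement (Lemma~\ref{ind}) about base points after blowing up generic points, whose proof runs through relatively generic analytic structures (Theorem~\ref{relgen}, the relative dominance criterion of Theorem~\ref{th:dominantrel}) and an induction on $h^1(\calO_{Z'})$; ``descend to a subcycle and transfer back'' is a hope rather than an argument, since base points appear exactly where restriction to subcycles changes $h^1$. Relatedly, your positivity argument for part 1) is circular as stated: $t_v\ge(l',E_v)$ produces candidate sub-divisors $D\le W$, but not that any of them gives a smooth point of $X$ with a genuine tangency --- that is again the unproved genericity statement. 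The paper instead proves part 1) directly: it first shows, by a long $C_{min}$-based blowup computation, that $\calO_Z(Z+K-D)$ has nonempty regular sections and no base points at the arrows of a generic $D\in\eca^{l'}(Z)$, and then uses the deformations $w+tw'$ and $g\circ f(t)=D(t)+D'(t)$ to split the divisor of a generic canonical section as $D+D'$ with $D$ generic. You should treat Lemma~\ref{ind} and the monodromy/Brill--Noether step as the core tasks, not as loose ends.
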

\begin{proof}

For an effective divisor $D \in \eca^{l'}(Z)$, such that $c^{l'}(Z)(D) \in \im(c^{l'}(Z))$ is smooth,  let's denote by $\Omega_D \subset H^1(\calO_Z)^* = H^0(Z, \calL_Z)$ the set of differential forms, which vanish on $T_{c^{l'}(Z)(D)}(\im(c^{l'}(Z)))$.
If $D \in \eca^{l'}(Z)$ is generic, then the map $c^{l'}(Z) : \eca^{l'}(Z) \to \im(c^{l'}(Z))$ is a submersion, which means that $T_{c^{l'}(Z)(D)}(\im(c^{l'}(Z))) = \im(T_D(  c^{l'}(Z)))$,
and so by \cite{NNI} we also have $h^1(\calO_{Z}(D)) = \dim(\Omega_D) = h^1(\calO_Z) - d_Z(l')$.

For part 1) we will prove that $H^0(Z, \calL_Z)_{reg} \neq \emptyset$ and if we have a generic element $\omega \in H^0(\calO_Z(K + Z))_{reg}$, then there is a generic divisor $D \in \eca^{l'}(Z)$ in the sense described above and another divisor $D' \in \eca^{Z - Z_K - l'}(Z)$, such that the divisor of $\omega$ is $D + D'$.

Let's see first that part 1) follows from this statement.

Indeed we have to prove that $\tau(\overline{\im(c^{l'}(Z))}) \geq 1$, so if we have a generic element in the dual space $ \omega \in H^1(\calO_Z)^*$, then there is a smooth point 
$p \in  \overline{\im(c^{l'}(Z))}$, such that $\omega$ vanishes on $T_p(\overline{\im(c^{l'}(Z))})$.
However by Seere duality we have $ H^0(\calO_Z(K + Z)) = H^1(\calO_Z)^*$, so there is a divisor $D \in \eca^{l'}(Z)$ and $D' \in \eca^{Z - Z_K - l'}(Z)$, such that the divisor of $\omega$ is $D + D'$ and the map $c^{l'}(Z) : \eca^{l'}(Z) \to \im(c^{l'}(Z))$ is a submersion in $D$.

We show that $\omega$ hasn't got a pole along the divisor $D$, or in a more precise way that $\omega$ vanishes on $\im(T_D(  c^{l'}(Z)))$.

Let's write $D = \cup_{1 \leq j \leq i} D_j$, where the cuts $D_j$ are disoint and transversal to the exceptional divisor $E$ at its smooth points.
If $D$ is an enough generic divisor, we have $\im(T_D(  c^{l'}(Z))) = \oplus_{1 \leq j \leq i} \im(T_{D_j}( c))$, so we have to prove that $\omega$ vanishes on each $ \im(T_{D_j}( c))$.
Assume that the divisor $D_j$ is supported on the smooth part of the exceptional divisor $E_u$.

We have to prove that if we have a tangent vector $v \in T_{D_j}\eca^{-E_u^*}(Z)$
and we have an arbitrary curve $f: (\bC, 0) \to \eca^{-E_u^*}(Z)$, such that $f(0) = D_j$ and $f'(0) = v$, then $\frac{d}{dt}(\omega(c^{-E_u^*}(Z)(f(0)))) = 0$.

Let's have a local chart $(x, y)$ of the space $\tX$ near the intersection point $E_u \cap D_j$ such that $E_u = (x = 0)$ and $D_j = (y = 0)$.
Let's realise the tangent vector $v$ by an aproppriate deformation of the divisor $D_j$ of the form $f(t) = [y + t \cdot \sum_{0 \leq k \leq Z_u -1} a_k \cdot x^k]$, and let's denote
$f(t) = D_t$.

We can express a representative of the differential form $\omega$ in local cordinates as $\omega = (\sum_{1 \leq i, -Z_u \leq j} a_{i, j} y^i x^j) dx \wedge dy$, so by Laufer integration formula we get:

\begin{equation*}
\frac{d}{dt}(\omega(c^{-E_u^*}(Z)(f(0)))) = \frac{d}{dt}\left(  \int_{|x|=\epsilon, \atop |y|=\epsilon}  \log \left(1+ t \cdot \frac{ \sum_{0 \leq k \leq Z_u -1} a_k \cdot x^k}{y} \right) \left(\sum_{1 \leq i, -Z_u \leq j} a_{i, j} y^i x^j \right) dx\wedge dy  \right).
\end{equation*}

However this is zero, which can be easily seen by the residuum formula.

We will prove first that if $D \in \eca^{l'}(Z)$ is a generic divisor, and we denote the distinct intersection points of $D$ with some exceptional divisor $E_v$ by $p_1, p_2, \cdots, p_m$ (where $m = (l', E_v)$), then $H^0(\calO_Z(Z+K - D))_{reg} \neq \emptyset$, and the line bundle
$\calO_Z(Z+ K - D)$ hasn't got a base point at the points $p_1, \cdots, p_m$.

Notice that $H^0(\calO_Z(Z+K))_{reg} \neq \emptyset$ also follows from this statement.

Suppose first that $H^0(\calO_Z(Z+ K - D))_{reg} = \emptyset$, this means that $H^0(\calO_Z(Z+K - D)) = H^0(\calO_Z'( Z'+K - D))$ for some effective integer cycle $0 \leq Z' < Z, Z' $, which means that $h^1(\calO_Z(D)) = h^1(\calO_{Z'}(D))$, however this is impossible by the assumption $Z = C_{min}(Z, l')$.

By simmetry we only have to prove that the line bundle $\calO_Z(Z+K - D)$ hasn't got a base point at the point $p_1$.

There are two cases, assume first that $Z_v > 1$:

Let's blow up $E_v$ at the point $p_1$ and let's denote the blow up map by $\pi_{p_1}$ and the new exceptional divisor by $E_{p_1}$ and $Z_{p_1} = \pi_{p_1}^*(Z) + (Z_v - 1) E_{p_1}$ and let $D'$ be the strict transform of the pullback of $D$, so $\pi_{p_1}^*(D) = D' + E_{p_1}$.

We should prove that $H^0(\calO_{Z_{p_1}}(\pi_{p_1}^*(Z+K - D)))_{reg} \neq \emptyset$.

Notice that we have $\calO_{Z_{p_1}}(\pi_{p_1}^*(Z+K - D))= \calO_{Z_{p_1}}(Z_{p_1} + K_{p_1} - D' - E_{p_1})$, so it means we have to prove $H^0(\calO_{Z_{p_1}}(Z_{p_1} + K_{p_1} - D' - E_{p_1}))_{reg} \neq \emptyset$.

This is equivalent to that there isn't an integer effective cycle $Z' < Z_{p_1}$, such that $h^1(\calO_{Z_{p_1}}( D' + E_{p_1})) = h^1(\calO_{Z'}( D' + E_{p_1}))$, or equivalently for every cycle $Z' < Z_{p_1}$ one has $h^1(\calO_{Z' }(D' + E_{p_1})) < h^1(\calO_{Z_{p_1}}(D' + E_{p_1})) = h^1(Z, D)$.

If for some $u \in \calv$ one has $Z'_u < Z_u$, then we know it, since by the assumptions on $Z$ for every cycle $Z'' < Z$ one has $h^1(\calO_{Z''}( D)) < h^1(\calO_Z(D))$.

Now we know that $Z_{p_1 } \geq E_{p_1}$ because of $Z_v > 1$ and it remains to prove that $h^1(\calO_{Z_{p_1} - E_{p_1}}(D' + E_{p_1})) < h^1(\calO_{Z_{p_1}}(D' + E_{p_1})) = h^1(\calO_Z( D))$.

Notice that since the map $ c^{l'}(Z) :  \eca^{l'}(Z) \to \im(c^{l'}(Z))$ is birational and $D \in \eca^{l'}(Z)$ is a generic divisor, the line bundle $\calO_Z(D)$ has got base points at the points $p_1, \cdots p_m$, so we have $H^0(\calO_{Z_{p_1}}( D' + E_{p_1}))_{reg} = \emptyset$.

On the other hand it is obvious that $H^0(\calO_{Z_{p_1}- E_{p_1}}(D'))_{reg} \neq \emptyset$.

In the following we have again two cases:

Assume first that $H^0(\calO_{Z_{p_1} - E_{p_1}}(D' + E_{p_1}))_{reg} \neq \emptyset$, then we can use Theorem\ref{relgen} to obtain $h^1(\calO_{Z_{p_1} - E_{p_1}}(D' + E_{p_1})) = h^1(\calO_{Z^*}(D' + E_{p_1}))$, where $Z^*$ is the cycle with same coefficents as $Z$, but on the blowup singularity (notice, that $D'$ is also a generic divisor on the blown up singularity).

By the definition of the cycle $Z$ we know that $h^1(\calO_Z(D)) = \chi(-l') - \chi(-l' + Z) + 1$.

Indeed from \cite{NNAD} we know that we have:

\begin{equation*}
h^1(\calO_Z(D)) = \max_{0 \leq Z' \leq Z}(  \sum_{1 \leq i \leq n}( \chi(l') - \chi(l' + Z_i) + D(Z_i, l') )),
\end{equation*}

where the connected components of $|Z'|$ are $|Z'_1|, \cdots, |Z'_n|$ and $D(Z_i, l') = 1$ if the Chern class $l'$ is not dominant of the cycle $Z_1$, and $D(Z_i, l') = 0$ otherwise.
Also the maximum is attained for a cycle $Z'$, such that $  D(Z_i, l') = 1$ for each $0 \leq i \leq n$.

If $Z' < Z$ were true, then $h^1(\calO_{Z}(D)) \leq h^1(\calO_{Z'}(D))$, which is impossible by the fact $Z = C_{min}(Z, l')$.

It means that we indeed have $h^1(\calO_Z(D)) = \chi(-l') - \chi(-l' + Z) + 1$.

On the other hand we have $h^1(\calO_{Z^*}( D' + E_{p_1})) = h^1(\calO_{Z^*}) - \dim(\im(c^{l'}(Z^*)))$, because the restriction of the divisor $D' + E_{p_1}$ to the cycle $Z^*$ is a generic divisor in $\im(c^{l'}(Z^*))$.

This means that there is a cycle $Z' \leq  Z^*$, such that $h^1(\calO_{Z^*}( D' + E_{p_1})) = \sum_{1 \leq i \leq n} (\chi(-l') - \chi(-l' + Z'_i) + D(Z'_i, l'))$, where
the connected components of $|Z'|$ are $|Z'_1|, \cdots, |Z'_n|$.

Let's look at the cycle $Z'' \leq Z$ which has got the same coefficients as $Z'$, but on the singularity before blown up, and let's denote the connected components of $|Z''|$ by $|Z''_1|,  \cdots, |Z''_n|$.

We immediately get the following:

\begin{equation*}
h^1(\calO_{Z^*}( D' + E_{p_1} )) = \sum_{1 \leq i \leq n} (\chi(-l') - \chi(-l' + Z'_i) + D(Z'_i, l')) \leq \sum_{1 \leq i \leq n} (\chi(-l') - \chi(-l' + Z''_i) + D(Z''_i, l')).
\end{equation*}

On the other hand by the condition $Z = C_{min}(Z, l')$ we know that $\sum_{1 \leq i \leq n} (\chi(-l') - \chi(-l' + Z''_i) + D(Z''_i, l')) \leq \chi(-l') - \chi(-l' + Z) + 1$ and equality happens if and only if $Z = Z''$.

It means that if $Z'' < Z$, then $h^1(\calO_{Z^*}( D' + E_{p_1} )) < \chi(-l') - \chi(-l' + Z) + 1$, so we indeed get $h^1(\calO_{Z_{p_1} - E_{p_1}}(D' + E_{p_1})) <  h^1(\calO_Z( D))$.

On the other hand if $Z = Z''$ and $Z_v \geq 2$, then we have  $\chi(-l') - \chi(-l' + Z) > \chi(-l') - \chi(-l' + Z')$ and $1 \geq D(Z', l')$, which yields that $h^1(\calO_{Z_{p_1} - E_{p_1}}(D' + E_{p_1})) <  h^1(\calO_Z( D))$ and we are done.

Now let's assume in the following that $H^0(\calO_{Z_{p_1} - E_{p_1}}(D' + E_{p_1}))_{reg}= \emptyset $, notice that in this case we have $Z_{p_1} - E_{p_1} \geq E_{p_1}$.

We know that $H^0(\calO_{Z_{p_1} - 2 E_{p_1}}( D'))_{reg} \neq \emptyset$, which means that $H^0(\calO_{Z_{p_1} - 2 E_{p_1}}(D')) = H^0(\calO_{Z_{p_1} - E_{p_1}}(D' + E_{p_1}))$ and $h^1(\calO_{Z_{p_1} - E_{p_1}}(D' + E_{p_1})) = h^1(\calO_{Z_{p_1} - 2 E_{p_1}}( D')) - 1$.

Notice, that $Z_{p_1} - 2 E_{p_1}$ is an effective integer cycle on a generic resolution and $D'$ is a generic divisor on it.

Let's denote the Chern class of $D'$ by $l'' =  \pi_{p_1}(l')- E_{p_1}$, now we know that there exists a cycle $Z' \leq Z_{p_1} - 2 E_{p_1}$ with connected components $|Z'_1|, \cdots, |Z'_n|$, such that we have:

\begin{equation*}
h^1(\calO_{Z_{p_1} - 2 E_{p_1}}( D')) = \sum_{1 \leq i \leq n } (\chi(-l'') - \chi(-l'' + Z'_i)  + D(Z'_i, l'')).
\end{equation*}

If there isn't any component $|Z'_i|$, such that $v \in |Z'_i|$, then one has $h^1(\calO_{Z_{p_1} - 2 E_{p_1}}(D'))  \leq h^1(\calO_Z(D))$ and we are done.

So assume in the following on the other hand that $v \in |Z'_1|$.

Let's have the cycles $Z''_2, \cdots Z''_n$, which have got the same coeficcients as $Z'_2, \cdots, Z'_n$, but on the singularity before the blown up.

Let's have also the cycle $Z'_1 = A' + t E_{p_1}$ and let's denote by $A''$ the cycle, which has got the same coeficcients as $A'$, but on the singularity before the blown up and let's have $Z'' = A'' + \sum_{2 \leq i \leq n} Z''_i$.

We know that $\chi(-l'') - \chi(-l'' + Z''_i)  + D(Z''_i, l'') = \chi(-l') - \chi(-l' + Z'_i)  + D(Z'_i, l')$ if $2 \leq i \leq n$.

Now if $ D(A', l') = 0$, then we have $\chi(-l') - \chi(-l' + A')  + D(A', l') \leq -1$, which yields by an easy calculation that  $\chi(-l'') - \chi(-l'' + Z'_1)  + D(Z'_1, l'') \leq 1$.

Indeed we have:
\begin{equation*}
\chi(-l'') - \chi(-l'' + Z'_1) =  \chi(-l') - \chi(-l' + A') + (A'_v - t) - \frac{(A'_v -t)(A'_v - t -1)}{2}.
\end{equation*}
From this we get that $\chi(-l'') - \chi(-l'' + Z'_1)  \leq \chi(-l') - \chi(-l' + A') + 1$ and $ D(Z'_1, l'') \leq 1$, so we indeed get $\chi(-l'') - \chi(-l'' + Z'_1)  + D(Z'_1, l'') \leq 1$.

On the other hand we have $h^1(\calO_Z(D)) = \chi(-l') - \chi(-l' + Z) + D(Z, l') > \sum_{2 \leq i \leq n} (\chi(-l') - \chi(-l' + Z'_i)  + D(Z'_i, l'))$ because of $Z \neq \sum_{2 \leq i \leq n} Z_i$.

This means that indeed we have:

\begin{equation*}
h^1(\calO_{Z_{new} - 2 E_{new}}( D')) = \sum_{1 \leq i \leq n } (\chi(-l'') - \chi(-l'' + Z'_i)  + D(Z'_i, l'')) \leq \chi(-l') - \chi(-l' + Z) + D(Z, l') = h^1(\calO_Z(D)),
\end{equation*}
and we are done.

Now assume in the following that $D(A', l') = 1$, then by the inequality $\chi(-l'') - \chi(-l'' + Z'_1)  \leq \chi(-l') - \chi(-l' + A') + 1$ we get again that $\chi(-l'') - \chi(-l'' + Z'_1)  + D(Z'_1, l'') \leq \chi(-l') - \chi(-l' + A'')  + D(A'', l') + 1$.   

On the other hand we know that $h^1(\calO_Z(D)) = \chi(-l') - \chi(-l' + Z) + D(Z, l') \geq \sum_{2 \leq i \leq n} (\chi(-l') - \chi(-l' + Z'_i)  + D(Z'_i, l')) + \chi(-l') - \chi(-l' + A'')  + D(A'', l')$ and equality happens if and only if $Z'' = Z$.

If $Z'' < Z$ it yields the statement immediately, so assume that $A'' = Z'' = Z$ in the following.

In this case $\chi(-l'') - \chi(-l'' + Z')  + D(Z', l'') \leq \chi(-l') - \chi(-l' + Z)  + 1$, so this yields the statement again.

Indeed we have $D(Z', l'') \leq 1$ and on the other hand $\chi(-l'') - \chi(-l'' + Z') = -(l'', Z') - \chi(Z') = -(\pi_p^*(l') - E_p , Z') - \chi(Z')$, which means that:

\begin{equation*}
\chi(-l'') - \chi(-l'' + Z') =  \chi(-l') - \chi(-l' + Z) + (Z_v - t) - \frac{(Z_v -t)(Z_v - t -1)}{2}.
\end{equation*}

This indeed yields $\chi(-l'') - \chi(-l'' + Z')  + D(Z', l'') \leq \chi(-l') - \chi(-l' + Z)  + 1$ since $t \leq Z_v - 3$.

Assume in the following on the other hand that $Z_v= 1$:

Let's have a generic divisor $D \in \eca^{l'}(Z)$, we know that $H^0(\calO_Z(Z + K - D))_{reg} \neq \emptyset$, so let's have a divisor $D' \in \eca^{Z-Z_K - l'}(Z)$, such that $\calO_Z(D + D') = \calO_Z(K + Z)$
and $D$ and $D'$ are disjoint on the exceptional divisors $E_u$, where $Z_u \geq 2$.

We know that such a divisor $D'$ exists by the fact that the line bundle $\calO_Z(Z + K - D)$ hasn't got a base point at the intersection points of $D$ with $E_u$, where $Z_u \geq 2$.

Let's have a section $\omega_1 \in H^0(\calO_Z(K + Z))_{reg}$, such that the divisor of $\omega_1$ is $|\omega_1| = D + D'$, and let's have a generic section $\omega_2 \in H^0(\calO_Z(K + Z))_{reg}$, such that $|\omega_2| \cap D = \emptyset$.  

By the local value distribution from $1$-dimensional complex analysis one easily gets that if $t \in (\bC, 0)$ is enough small, then we can write $ |\omega _1 + t \cdot \omega_2|= D_t + D'_t$,
where $|D_t| \cap |D'_t| = \emptyset$ and $D_t$ is close to $D_0 = D$ in $ \eca^{l'}(Z)$ if $t$ is enough small.

We know that if $t$ is enough small and $D \in \eca^{l'}(Z)$ is a generic divisor, then also $D_t$ is a generic divisor and we know that the line bundle $\calO_{Z}(Z + K-D_t)$ hasn't
got base points in $|D_t|$, which yields our statement.

We have proved that if $D \in \eca^{l'}(Z)$ is a generic divisor, and we denote the distinct intersection points of $D$ with some exceptional divisor $E_v$ by $p_1, p_2, \cdots, p_m$ (where $m = (l', E_v)$), then $H^0(\calO_Z(Z+K - D))_{reg} \neq \emptyset$, and the line bundle
$\calO_Z(Z+K - D)$  hasn't got a base point at the points $p_1, \cdots, p_m$.

Let's have a divisor $D'$ such that $|D| \cap |D'| = \emptyset$ and $\calO_Z(Z+K) = \calO_Z(D + D')$, this means, there is a section $s \in H^0( Z, Z+K)_{reg}$, such that the divisor of $s$ is $D + D'$.

Now let's have a generic section $s' \in H^0( \calO_Z(Z+K))_{reg}$, and let's have the map $f : (\bC, 0) \to H^0( \calO_Z(Z+K))_{reg}$ given by $f(t) = t \cdot s' + s$.
If we denote the map $H^0( \calO_Z(Z+K))_{reg} \to \eca^{-Z_K + Z}(Z)$ by $g$, then we have the map $g \circ f :  (\bC, 0) \to \eca^{-Z_K + Z}(Z)$, where we have $g \circ f(0) = D + D'$.

For a small enough number $t \in  (\bC, 0), t \neq 0$ we know that $t \cdot s' + s$ is a generic section in $ H^0( \calO_Z(Z+K))_{reg}$ and since $| D| \cap |D'| = \emptyset$ we know that there exist divisors $D(t), D'(t)$, such that $D(0) = D, D'(0) = D'$ and $D(t) \cap D'(t) = \emptyset$
and $g \circ f(t)= D(t) + D'(t)$. 

We also know that for small value of $t$ $D(t)$ is a generic divisor in $\eca^{l'}(Z)$, so part 1) is proved.

Notice that the fist statement of part 2) is immediate from lemma\ref{baseI} since we have proved above that $H^0(\calO_{Z}( Z+K))_{reg} \neq \emptyset$, so it follows that the 
line bundle $\calO_{Z}( Z+K)$ hasn't got base points at intersection points of exceptional divisors.

Notice that similarly the second statement of part 2) follows in the case, when $Z_v = 1$ from lemma\ref{baseII} since $H^0(\calO_{Z}( Z+K))_{reg} \neq \emptyset$.

In the following we prove the second statement of part 2) in the case $Z_v > 1$:

In fact we will prove somewhat more in the following lemma, which we state here in its full generality, so we can use it also in the proof of part 3):

\begin{lemma}\label{ind}
Let's have an arbitrary resolution graph $\mathcal{T}$ and a generic singularity $\tX$ corresponding to it.

Let's have furthermore an integer effective cycle $Z \geq E$ and a cycle $Z' \leq Z$ and an arbitrary vertex $v \in \calv$, and assume that $Z_v > 1$ and $Z'_v \geq 1$.

Let's blow up the divisors $E_u, u \in |Z'|$ in $r_u$ generic points $q_{u, 1}, q_{u, 2}, \cdots, q_{u, r_u}$ and let the new divisors be $E_{u, 1}, \cdots, E_{u, r_u}$ and let's denote $l = \sum_{u \in \calv, 1 \leq i \leq r_u} E_{u, i}$ and $Z_{new} = \pi^*(Z) - l$ and $Z'_{new} =  \pi^*(Z') - l$.

Assume that $Z' = Z$ and $H^0(\calO_Z(Z+K))_{reg} \neq \emptyset$ or if $Z' \neq Z$, then $|Z'| \neq |Z|$ and there exists a vertex $s \in |Z| \setminus |Z'|$, such that $s$ is a neighbour to the subgraph $|Z'|$ and for every vertices $w  \neq s, v$ on the unique string between $s$ and $v$
we have $r_w = 0$.

Assume furthermore that $H^0(\calO_{Z'_{new}}(K_{new} + Z_{new} - l))_{reg} \neq \emptyset$ and the dimension of the image of the map $H^0(\calO_{Z'_{new}}( K_{new} + Z_{new} - l)) \to H^0(\calO_{E_v}(K_{new} + Z_{new} - l))$ is bigger, than $1$.

Then the line bundle $\calO_{Z'_{new}}(K_{new} + Z_{new} - l)$ hasn't got a base point on the regular part of $E_v$.
\end{lemma}
\begin{proof}

We will prove this statement by induction on $h^1(\calO_{Z'})$, if $h^1(\calO_{Z'}) = 0$, then this is trivial.
So suppose in the following that $h^1(\calO_{Z'}) = k$ and we know the statement if $h^1(\calO_{Z'}) \leq k-1$.

Assume to the contrary that for a generic singularity $\tX$ the line bundle $\calL := \calO_{Z'_{new}}(K_{new} + Z_{new} - l)$ has got a base point $p$ on the regular part of $E_v$ with multiplicity $m$.

This means that for a generic section $s \in \im(H^0(\calL) \to H^0(E_v, \calL | E_v))$ the section $s$ vanishes in $p$ of order $m$.

In the following it's easy to see that we can assume that $h^1(\calO_{Z'}) > h^1(\calO_{Z''})$ for every effective integer cycle $Z'' < Z'$.

Indeed if $Z = Z'$, then we have $H^0(\calO_Z(Z+K))_{reg} \neq \emptyset$ and it indeed yields that $h^1(\calO_Z) > h^1(\calO_{Z''})$ for every cycle $Z'' < Z$, where $Z \neq Z''$.

Assume on the other hand that $Z \neq Z'$ and $h^1(\calO_{Z''}) = h^1(\calO_{Z'})$ and $h^1(\calO_{Z''}) > h^1(\calO_{Z'''})$ for every cycle $0 \leq Z''' < Z''$, then we can restrict everything to the cycle $Z''$.

We get that $ H^0(\calO_{Z''_{new}}( K_{new} + Z_{new} - l))_{reg} \neq \emptyset$ and the dimension of the image of the map $H^0(\calO_{Z''_{new}}(  K_{new} + Z_{new} - l)) \to H^0(\calO_{E_v}( K_{new} + Z_{new} - l))$ is bigger, than $1$, and the line bundle 
$\calO_{Z''_{new}}( K_{new} + Z_{new} - l)$ has got the base point $p$ of multiplicity $m$ on the regular part of $E_v$, in particular we get $Z'' \geq E_v$.

The other conditions of our lemma holds trivially, indeed there was a vertex $s \in |Z| \setminus |Z'|$, such that $s$ is a neighbour to the subgraph $|Z'|$ and for every vertices $w  \neq s, v$ on the unique string between $s$ and $v$ we have $r_w = 0$.

If we have the uniqe vertex $s'$ on the string between $s$ and $v$, such that $s'$ is a neighbour to the subgraph $|Z''|$, then we also get that for every vertices $w  \neq s', v$ on the unique string between $s'$ and $v$ we have $r_w = 0$.

This indeed means that the conditions of the lemma holds for the cycles $Z'', Z$.

So this means that we can assume that $Z'' = Z'$, which also means that $\chi(Z'') > \chi(Z')$ if $Z'' < Z'$.

There are two cases, assume first in the following that  $Z'_v = Z_v$, in particular this means, that $Z'_v \geq 2$:

Let's blow up $E_v$ sequentially in generic points $t =Z'_v -1$ times, starting with $q$, and let the new divisors be $E_{v'_1}, \cdots, E_{v'_t}$, and let $Z'_1 = \pi^*(Z'_{new}) - \sum_{1 \leq i \leq t} i \cdot E_{v'_i}$ and  $Z_1 = \pi^*(Z_{new}) - \sum_{1 \leq i \leq t} i \cdot E_{v'_i}$, we know that $h^1(\calO_{Z'_1}) = h^1(\calO_{Z'_{new}}) = k$.

We know that $e_{Z'_1}( v'_t)  > 0$, because we have blown up $E_v$ sequentially in generic points and there is a differential form in $H^1(\calO_{Z'})^*$, which has got a pole of order $Z'_v$ along the exceptional divisor $E_v$.

Let the new vertex set of the blown up singularity be $\calv_1$, and let's look at the line bundle $\pi^*(\calL) = \calO_{Z'_1}(K_{1} + Z_1 - l)$, we know that it has got a base point at $p$ with multiplicity $m$ on $E_v$ and we have $H^0(Z'_1, \pi^*(\calL))_{reg} \neq \emptyset$.

Let's denote the restriction of the cycle $Z'_1$ to the vertex set $\calv_1 \setminus E_t$ by $Z''_{1}$, where notice that $h^1(\calO_{Z''_{1}}) < h^1(\calO_{Z'_1}) = h^1(\calO_{Z'_{new}}) = k$, since $e_{Z'_1}( v'_t)  > 0$, notice also that $Z''_{1} = Z'_1 - E_t$.

On the other hand we have $Z''_1 \leq Z_1$,  and we know that the dimension of the map $H^0(\calO_{Z''_1}(K_{1} + Z_1 - l)) \to H^0(\calO_{E_v}( K_{1} + Z_1 - l))$  is bigger then $1$, so we know that the line bundle $\calO_{Z''_1}(K_{1} + Z_1 - l)$ on the cycle $Z''_1$ hasn't got a base point on the regular part of $E_v$, because we can see easily that the conditions of the induction hypothesis hold.

Indeed we have the vertex $v'_t \in |Z_1|$, which is a neighbour of $|Z'_1|$ and for every vertices $w  \neq v'_t, v$ on the unique string between $s'$ and $v$ we have $r_w = 0$.

Let's denote $Z''_{1, p} = \pi_p^*(Z''_1) - E_{p}$  and $Z'_{1, p} = \pi_p^*(Z'_1) - E_{p}$ , where $\pi_p$ is the blowup map at the base point $p$. 

We know by Seere duality that $H^0(\calO_{Z'_1}(K_{1} + Z_1 - l)) = H^1(\calO_{Z'_1}( Z'_1 - Z_1 + l))$ and $H^0(\calO_{Z'_{1, p} - E_{p}}( \pi_p^*( K_{1} + Z_1 - l ) - E_{p})) = H^1(\calO_{Z'_{1, p} - E_{p}}(\pi_p^*(Z'_1 - Z_1 + l)))$.

We know that $H^0(\calO_{Z'_1}(K_{1} + Z_1 - l )) =  H^0(\calO_{Z'_{1, p} - E_{p}}( \pi_p^*( K_{1} + Z_1 - l ) - E_{p}))$, which means that $h^1(\calO_{Z'_1}(Z'_1 - Z_1 + l)) =  h^1(\calO_{Z'_{1, p} - E_{p}}(\pi_p^*(Z'_1 - Z_1 + l)))$.

We will prove from it that $h^1(\calO_{Z''_1}( Z''_1 - Z_1 + l) = h^1(\calO_{Z''_{1,p} - E_{p}}(\pi_p^*(Z''_1 - Z_1 + l)))$, which yields that $H^0(\calO_{Z''_1}( K_{1} + Z_1 - l)) = H^0(\calO_{Z''_{1, p} - E_{p}}(\pi_p^*( K_{1} + Z_1 - l) - E_{p}))$, so the line bundle $\calO_{Z''_1}( K_{1} + Z_1 - l)$
 has got a base point at $p$, which will be a contraditcion.

Now we have two cases, first assume that $ Z' = Z$, it means obviously that $Z'_1 = Z_1$.
In this case we know that $h^1(\calO_{Z'_1}(l)) = h^1(\calO_{Z'_{1,p} - E_{p}}(l))$, and we want to prove that $h^1(\calO_{Z''_1}(- E_{v'_t} + l)) = h^1(\calO_{Z''_{1, p} - E_{p}}( - E_{v'_t} + l))$.

Let's have the pairs $(u, i), u \in \calv, 1 \leq i \leq r_u$ for which $(Z_1) \geq E_{u, i}$, it happens exactly, when $Z_{u} > 1$.

Let's denote $\sum_{u \in \calv | Z_{u} = 1, 1 \leq i \leq r_u}  E_{u, i} = l_2$  and $\sum_{u \in \calv | Z_{u} > 1, 1 \leq i \leq r_u}  E_{u, i} = l_1$.

Since for every pair $(u, i)$, such that $Z_u > 1$ we know that $(l, E_{ u_i}) < 0$ we know that $h^1(\calO_{Z_1}(  l)) = \chi( - l) - \chi( - l_2) + h^1(\calO_{Z_1 - l_1}(l_2))$.

Similarly we have $ h^1(\calO_{Z_{1, p} - E_{p}}( l)) = \chi( - l) - \chi( - l_2) + h^1(\calO_{Z_{1, p} - E_{p}- l_1 }( l_2))$, it means that we have $h^1(\calO_{Z_1 - l_1}(l_2)) = h^1(\calO_{Z_{1, p} - E_{p}- l_1 }( l_2))$.

On the other hand we have to prove that $h^1(\calO_{Z''_1}( - E_{v'_t} + l)) = h^1(\calO_{Z''_{1, p} - E_{p}}(- E_{v'_t} + l))$, now we know that $h^1(\calO_{Z''_1}(- E_{v'_t}+ l)) = \chi(E_{v'_t} - l) - \chi(E_{v'_t} - l_2) +  h^1(\calO_{Z''_1 - l_1}(- E_{v'_t} + l_2))$.

Similarly we have $ h^1(\calO_{Z''_{1, p} - E_{p}}(-E_{v'_t} + l)) = \chi(E_{v'_t} - l) - \chi(E_{v'_t} - l_2) + h^1(\calO_{Z''_{1, p} - E_{p}- l_1}(-E_{v'_t}  + l_2))$.

It means that we have to prove the following:

\begin{equation*}
 h^1(\calO_{Z''_1 - l_1}(- E_{v'_t} + l_2)) = h^1(\calO_{Z''_{1, p} - E_{p}- l_1}(-E_{v'_t}  + l_2)).
\end{equation*}

Let's look at the following exact sequence:

\begin{equation*}
0 \to  H^0(\calO_{Z''_1 - l_1}(- E_{v'_t} + l_2)) \to H^0(\calO_{Z_1 - l_1}(l_2)) \to  H^0( \calO_{E_{v'_t}}).
\end{equation*}

We know that the map $  H^0(\calO_{Z_1 - l_1}(l_2)) \to  H^0( \calO_{E_{v'_t}})$ is surjective, so we get that $h^0(\calO_{Z''_1 - l_1}(- E_{v'_t} + l_2)) =  h^0(\calO_{Z_1 - l_1}(l_2))  - 1$, which yields $h^1(\calO_{Z''_1 - l_1}(- E_{v'_t} + l_2)) =  h^1(\calO_{Z_1 - l_1}(l_2))$.

Similary let's look at the following exact sequence:

\begin{equation*}
0 \to H^0(\calO_{Z''_{1, p} - E_{p}- l_1}( l_2 - E_t) ) \to  H^0(\calO_{Z_{1, p} - E_{p}- l_1}( l_2))  \to  H^0(\calO_{E_{v'_t}}).
\end{equation*}

We know that the map $H^0(\calO_{Z_{1, p} - E_{p}- l_1}( l_2))  \to  H^0(\calO_{E_{v'_t}})$ is surjective, so we get that  $ h^0(\calO_{Z''_{1, p} - E_{p}- l_1}( - E_{v'_t} + l_2) )  =    h^0(\calO_{Z_{1, p} - E_{p}- l_1}( l_2)) - 1$, 
which yields $ h^1(\calO_{Z''_{1, p} - E_{p}- l_1}(- E_{v'_t} + l_2) )  =    h^1(\calO_{Z_{1, p} - E_{p}- l_1}( l_2))$.

These two equations immediately prove the claim in the case $ Z' = Z$.

Assume in the following, that $Z' \neq Z$, this means by our condition, that $| Z|$ is strictly bigger, than $| Z'|$ and there is a vertex $s \in |Z| \setminus |Z'|$, such that $s$ is a neighbour of the
subgraph $|Z'|$ and for every vertices $w \neq s, v$ on the unique string between $s$ and $v$ we have $r_w = 0$.

Let's have the pairs $(u, i), u \in \calv, 1 \leq i \leq r_u$ for which $(Z'_1) \geq E_{u, i}$, it happens exactly, when $Z'_{u} > 1$.

Let's denote $\sum_{u \in \calv | Z'_{u} = 1, 1 \leq i \leq r_u}  E_{u, i} = l_2$  and $\sum_{u \in \calv | Z'_{u} > 1, 1 \leq i \leq r_u}  E_{u, i} = l_1$.

We know as before that $h^1(\calO_{Z'_1}(Z'_1 - Z_1 + l)) = \chi(-Z'_1 +Z_1 - l) - \chi(-Z'_1 + Z_1 - l_2) + h^1(\calO_{Z'_1 - l_1}( Z'_1 - Z_1 + l_2))$.

Similarly we have $ h^1(Z'_{1, p} - E_{p}, \pi_p^*(Z'_1 - Z_1) + l) = \chi(-Z'_1 + Z_1 - l) - \chi(-Z'_1 + Z_1 - l_2) +  h^1(\calO_{Z'_{1, p} - E_{p}- l_1}(\pi_p^*(Z'_1 - Z_1) + l_2))$.

It means that we know:

\begin{equation*}
h^1(\calO_{Z'_1 - l_1}( Z'_1 - Z_1 + l_2)) = h^1(\calO_{Z'_{1, p} - E_{p}- l_1}(\pi_p^*(Z'_1 - Z_1) + l_2)).
\end{equation*}
On the other hand we have to prove that $h^1(\calO_{Z''_1}( Z''_1 - Z_1 + l)) = h^1(\calO_{Z''_{1, p} - E_{p}}(\pi_p^*(Z''_1 - Z_1) + l))$.

We know that $h^1(\calO_{Z''_1}( Z''_1 - Z_1 + l)) = \chi(-Z''_1 + Z_1 - l) - \chi(-Z''_1 + Z_1 - l_2) + h^1(\calO_{Z''_1 - l_1}( Z''_1 - Z_1 + l_2))$.
Similarly we have $ h^1(Z''_{1, p} - E_{p}, \pi_p^*(Z''_1 - Z_1) + l) = \chi(-Z''_1 + Z_1 - l) - \chi(-Z''_1 + Z_1 - l_2) + h^1(\calO_{Z''_{1, p} - E_{p}- l_1 }(\pi_p^*(Z''_1 - Z_1) + l_2))$.

It means that we have to prove:

\begin{equation*}
 h^1(\calO_{Z''_1 - l_1}( Z''_1 - Z_1 + l_2)) = h^1(\calO_{Z''_{1, p} - E_{p}- l_1}(\pi_p^*(Z''_1 - Z_1) + l_2)).
\end{equation*}

Let's have the string $u_1 = s, u_2, \cdots , u_q = v$, then by the conditions on $s$, there is a Laufer sequence $A_1 = E_{u_2}, ..., A_{q-1} = A$, where 
$v \in |A| = (u_1, \cdots, u_q)$ and $A_{i+1} = A_i + E_{u_i}$ for $1 \leq i \leq q-1$ and furthermore $(Z_1' - Z_1 + l_2 - A_i,  E_{u_{i}}) = ( \pi_p^*(Z'_1 - Z_1) + l_2 - A_i,  E_{u_{i}}) < 0$ and $( Z''_1 - Z_1 + l_2 - A_i,  E_{u_{i}}) = ( \pi_p^*(Z''_1 - Z_1) + l_2 - A_i,  E_{u_{i}}) < 0$
and $E_{u_{j_i}} \in |Z''_1 - l_1 - A_{i-1}|$ for every $1 \leq i \leq q-1$ with the notation $A_0 = 0$.

We get that $ h^1(\calO_{Z'_1 - l_1 - A}(Z'_1 - Z_1 + l_2 - A)) = h^1(\calO_{Z'_{1, p} - E_{p} - l_1 - A}(\pi_p^*(Z'_1 - Z_1) + l_2- A))$ and we should prove that
$ h^1(\calO_{Z''_1 - l_1 - A}(Z''_1 - Z_1 + l_2 - A)) = h^1(\calO_{Z''_{1, p} - E_{p}- l_1 - A }( \pi_p^*(Z''_1 - Z_1) + l_2 - A))$.

Now notice that $(Z''_1 - Z_1 + l_2 - A, E_{v'_1}) < 0$, so we can start a Laufer sequence $B_i =  \sum_{1 \leq j \leq i} E_{v'_j}, 1 \leq  i \leq t$, such that $B_{i} = B_{i-1}+ E_{v'_i}$
and $(Z'_1 - Z_1 + l_2 - A -B_{i-1},  E_{v'_i}) < 0$.

From these Laufer sequences we get that both $ h^1(\calO_{Z'_1 - l_1 - A}(Z'_1 - Z_1 + l_2 - A)) = h^1(\calO_{Z'_{1, p} - E_{p} - l_1 - A}(\pi_p^*(Z'_1 - Z_1) + l_2- A))$ and
$ h^1(\calO_{Z''_1 - l_1 - A}(Z''_1 - Z_1 + l_2 - A)) = h^1(\calO_{Z''_{1, p} - E_{p}- l_1 - A }( \pi_p^*(Z''_1 - Z_1) + l_2 - A))$ are equivalent with
$ h^1(\calO_{Z'_1 - l_1 - A - B_t}(Z'_1 - Z_1 + l_2 - A - B_t)) = h^1(\calO_{Z'_{1, p} - E_{p} - l_1 - A - B_t}(\pi_p^*(Z'_1 - Z_1) + l_2- A - B_t))$, which proves the statement in the case $Z'_v = Z_v$.

Now let's see the more interesting case in the following so assume that $Z'_v < Z_v$:

Let's denote again $t = Z'_v - 1$ and let's blow up the exceptional divisor $E_v$ sequentially in generic points and let the new exceptional divisors be $E_{v'_1}, \cdots, E_{v'_t}$, where perhaps we have $t = 0$.
We know that every differntial form in $H^1(\calO_{Z'})^*$ has got a pole on the exceptional divisor $E_{v'_t}$ of order at most $1$.

Now let's denote $Z'_1 = \pi^*(Z'_{new}) - \sum_{1 \leq i \leq t} i \cdot E_i$ and $Z_1 = \pi^*(Z_{new}) - \sum_{1 \leq i \leq t} i \cdot E_i$ and  let the new vertex set of the blown up singularity be $\calv_1$, with this notations we have that $e_{Z'_{1}}( v'_t) > 0$.

Let's look at the line bundle $\pi^*(\calL) = \calO_{Z'_1}( K_{1} + Z_1 - l)$, we know that it has got a base point at $p$ with multiplicity $m$ on $E_v$.

Let's denote the vertex set $\calv_1 \setminus E_{v'_t}$ by $\calv_s$ and the restriction of the cycle $Z'_1$ to $\calv_s$ by $Z'_s$.

We know that if $t > 0$, then the dimension of the image of the map $H^0(Z'_s, \calL) \to H^0(E_v, \calL)$ is greater than $1$ and the induction hypothesis holds for $Z_1$ and $Z'_s$, 
so this means that the line bundle $\calL | Z'_s$ hasn't got a base point on the exceptional divisor $E_v$.

On the other hand if $t = 0$, then $E_v \notin |Z'_s|$.

Let's have a large number $N$, such that $\dim(\im(c^{-N E_{v'_t}^*}(Z'_1)))= e_{Z'_1}(v'_t)$, and let's blow up $E_{v'_t}$ in $N $ distinct generic points $p_1, \cdots p_N$, and let the new exceptional divisors be $E_{w_1}, \cdots , E_{w_N}$.

Let's denote the new singularity by $\tX_{b}$ and let's denote its subsingularity with vertex set $\calv_b \setminus w_1, \cdots, w_N$ by $\tX_u$, we have $p_g(\tX_u) = p_g(\tX_1)$ and $h^1(\calO_{Z'_u}) = h^1(\calO_{Z'_{1}})$, since none of the differential forms
in $H^1(\calO_{Z'})^*$ has got a pole along the exceptional divisors $E_{w_1}, \cdots , E_{w_N}$.

Let's denote furthermore the line bundle $ \calL_u = \calO_{Z'_u}( \pi^*( K_{1} + Z_{1} - l)) = \calO_{Z'_u}(K_{b} + \pi^*(Z_{1}) - \sum_{1 \leq i \leq N} E_{w_i} - l)$ on the cycle $Z'_u = \pi^*(Z'_{1}) - \sum_{1 \leq i \leq N} E_{w_i}$.

We know that the dimension of the image of the map $H^0(Z'_u, \calL_u) \to H^0(E_v,  \calL_u)$ is bigger then $1$, and we should prove that $\calL_u$ hasn't got a base point on the regular part of $E_v$.

Indeed, this is enough, since $h^1(\calO_{Z'_u}) = h^1(\calO_{\pi^*(Z_{1})})$ and $H^0(\calO_{\pi^*(Z'_{1})}( \pi^*( K_{1} + Z_{1} - l) )_{reg} \neq \emptyset$, so we get that $\calO_{\pi^*(Z'_{1})}( \pi^*( K_{1} + Z_{1} - l) $ also hasn't got a base point on the regular part of $E_v$, which is what we want to prove.

Now we know that $\calv_s =\calv_u \setminus v'_t$ , and the corresponding subsingularity is $\tX_s$.

We have the line bundle $\calL_s = \calL_u | Z'_s$ on the cycle $Z'_s$,  let's denote $c^1(\calL_u) = l'_u$ and $c^1(\calL_s) = l'_s$.

Notice that the coefficient of $E_{w_i}$ in $ K_{b} + \pi^*(Z_{1}) - \sum_{1 \leq i \leq N} E_{w_i} - l$ is nonzero because of $Z_v > Z'_v$.

Notice also that every differential form in $H^1(\calO_{Z'})^*$ has got a pole along the exceptional divisor $E_{v'_t}$ of order at most $1$, which means that if $D \in \eca^{-E_{v'_t}}(Z'_u)$ is an arbitrary divisor, then the line bundle $\calO_{Z'_u}(D)$ only depends on the intersection point $D \cap E_{v'_t}$.

Let's move in the following the intersection points $E_{w_i} \cap E_{v'_t}$ generically and the analytic type of the singularity as well.
Notice that if the original singularity was enough generic and we move the plumbing of the tubular neighborhood of the exceptional divisors $E_{w_i}, 1 \leq i \leq N$ with $\tX_u$ generically, then we get generic analytic types.

Notice that the restriction $\calL_u | Z'_s$ remains the same line bundle $\calL_s$ if we move the intersection points $E_{w_i} \cap E_{v'_t}$, since each divisor $D \in \eca^{-E_{v'_t}}(Z'_u)$ restricts to the zero divisor on $\tX_s$.

On the other hand we know that $\dim(\im(c^{-N E_{v'_t}^*}(Z'_1)))= e_{Z'_1}(v'_t)$ and the coefficients of $E_{w_1}, \cdots, E_{w_N}$ are positive in $ K_{b} + \pi^*(Z_{1}) - \sum_{1 \leq i \leq N} E_{w_i} - l$, which means that if we move the contact points $p_1, \cdots, p_N$, then the line bundle $\calL_u$ cover an open set in $ r_s^{-1}(\calL_s) \subset \pic^{l'_u}(Z'_u)$.

It means that for a generic choice of the contact points $p_1, \cdots, p_N$ the line bundle $\calL_u$ is a generic line bundle in $ r_s^{-1}(\calL_s )$.

We know that $H^0(Z'_u, \calL_u)_{reg} \neq \emptyset$ for generic analytic types, which means that the pair $(l'_u, \calL_s)$ is relative dominant on the cycle $Z'_u$, which means by Theorem\ref{th:dominantrel} that:

\begin{equation*}
\chi(-l'_u)  - h^1(Z'_s, \calL_s) < \min_{0 < A \leq Z'_u} \left( \chi(-l'_u + A)  - h^1((Z'_u - A)_s, \calL_s -  A)  \right).
\end{equation*}

Now we have the following lemma:

\begin{lemma}
Let's have the setup above and assume that the dimension of the image of the map $H^0(Z'_u, \calL_u) \to H^0(E_v, \calL_u)$ is more than $1$ and let $q \in E_v$ be a generic point and let's blow up $E_v$ in $q$.
Let the new divisor be $E_{q}$, the new singularity $\tX_{u, q}$ and $Z'_{u, q} = \pi_q^*(Z'_u)$.

1) Assume that $t=0$, which means that $v'_t = v$, we claim that the pair $(\pi_q^*(l'_u)- E_{q}, \calL_s)$ is relative dominant on the cycle $Z'_{u, q}$.

2) Assume that $t > 0$, which means that $v \in \calv_s$ and let's have the line bundle $\calL_{s, q} = \pi^*(\calL_s)( - E_{q})$ on $Z'_{s, q}$, we claim that $(\pi_q^*(l_u)- E_{q}, \calL_{s,q})$ is relative dominant on $Z'_{u, q}$.
\end{lemma}

\begin{proof}

By Theorem\ref{th:dominantrel} for part 1) we have to prove that for all cycles $0 < \pi_q^*(l_d) + a E_{q} \leq \pi_q^*(Z'_u)$, such that $H^0(\pi_q^*(Z'_u) - \pi_q^*(l_d) - a E_{q}, \pi_q^*(\calL_u)(- \pi_q^*(l_d) - (a+1) E_{q}))_{reg} \neq \emptyset$, where $\calL_u$ is a generic line bundle in $ r_s^{-1}(\calL_s )$ one has:

\begin{equation*}
\chi(-\pi_q^*(l'_u) + E_{q}) - h^1(Z'_s, \calL_s) < \chi(-\pi_q^*(l'_u) + (a+1) E_{q} + \pi_q^*(l_d)) - h^1((Z'_u - l_d)_s, \calL_s (-\pi_q^*(l_d) - a E_{q})).
\end{equation*}

\begin{equation*}
\chi(-l'_u)  + 1 - h^1(Z'_s, \calL_s) < \chi(-l'_u + l_d)  +  \frac{(a+1)(a+2)}{2} - h^1((Z'_u - l_d)_s, \calL_s(- l_d)) .
\end{equation*}

The condition $H^0(\pi_q^*(Z'_u) - \pi_q^*(l_d) - a E_{q}, \pi_q^*(\calL_u)( - \pi_q^*(l_d) - (a+1) E_{q}))_{reg} \neq \emptyset$ means in particular that $H^0(Z'_u - l_d,  \calL_u(- l_d))_{reg} \neq \emptyset$.

Now assume first that  $(l_d)_v >0$, we have the following exact sequence:

\begin{equation*}
0 \to H^0(Z'_u - E_v, \calL_u(- E_v)) \to H^0(Z'_u , \calL_u) \to H^0(E_v , \calL_u).
\end{equation*}

It means that by the condition of the lemma we get immediately that $\dim\left( \frac{H^0(Z'_u , \calL_u)}{H^0(Z'_u - E_v, \calL_u(- E_v))} \right) > 1$.
This means that $\dim\left( \frac{H^0(Z'_u , \calL_u)}{H^0(Z'_u -  l_d, \calL_u(- l_d))} \right) > 1$, from which it follows that:

\begin{equation*}
\chi(-l'_u)  - h^1(Z'_s, \calL_s)  + 1< \chi(-l'_u + l_d)  - h^1((Z'_u - l_d)_s, \calL_s( -l_d)).
\end{equation*}

Indeed we know that $\chi(-l'_u)  - h^1(Z'_u, \calL_u)  + 1< \chi(-l'_u + l_d)  - h^1(Z'_u - l_d, \calL_s (-l_d))$ and the pair $(l'_u, \calL_s)$ is relatively dominant on the cycle $Z'_u$, 
which means that $h^1(Z'_s, \calL_s) = h^1(Z'_u, \calL_u)$.

On the other hand $h^1((Z'_u - l_d)_s, \calL_s(-l_d)) \leq h^1(Z'_u - l_d, \calL_s (-l_d))$, which gives the claimed inequality.

The statement immediately follows in this case, since $\frac{(a+1)(a+2)}{2} \geq 0$.

On the other hand if $l_d > 0$, but $(l_d)_v = 0$, then since $0 < \pi^*(l_d) + a E_q$ we have $a \geq 0$ and so $\frac{(a+1)(a+2)}{2} \geq 1$.

Furthermore we know that $\chi(-l'_u)  + 1 - h^1(Z'_s, \calL_s) \leq \chi(-l'_u + l_d)  - h^1((Z'_u - l_d)_s, \calL_s(- l_d))$, so the statement follows again immediately.

If $l_d = 0$, then we have $a > 0$ and we have to prove:

\begin{equation*}
\chi(-l'_u)  + 1 - h^1(Z'_s, \calL_s) < \chi(-l'_u)  +  \frac{(a+1)(a+2)}{2} -  h^1(Z'_s, \calL_s).
\end{equation*}

Notice that this is also trivial, because if $a > 0$, then one has $\frac{(a+1)(a+2)}{2} > 1$.

For part 2) we have to prove that for all cycles $0 < \pi_q^*(l_d) + a E_{q} \leq \pi_q^*(Z'_u)$, such that $H^0(\pi_q^*(Z'_u) - \pi_q^*(l_d) - a E_{q}, \pi_q^*(\calL_u)(- \pi_q^*(l_d) - (a+1) E_{q}))_{reg} \neq \emptyset$, where $\calL_u$ is a generic line bundle in $ r_s^{-1}(\calL_{s})$ one has:

\begin{equation*}
\chi(-\pi_q^*(l'_u) + E_{q}) - h^1(Z'_{s, q}, \calL_{s, q}) < \chi(- \pi_q^*(l'_u) + (a+1) E_{q} + \pi_q^*(l_d)) - h^1((Z'_{u, q} - \pi_q^*(l_d) - a E_{q})_{s, q} , \pi_q^*(\calL_s)(-\pi_q^*(l_d) - (a+ 1)E_{q})).
\end{equation*}

\begin{equation*}
\chi(-l'_u)  + 1 - h^1(Z'_{s}, \calL_s) < \chi(-l'_u + l_d) + \frac{(a+1)(a+2)}{2} - h^1((Z'_{s, q} - \pi_q^*(l_d) - a E_{q})_{s, q}, \pi_q^*(\calL_s)(- \pi_q^*(l_d) - (a+ 1)E_{q}) ).
\end{equation*}

The condition $H^0(\pi_q^*(Z'_u) - \pi_q^*(l_d) - a E_{q}, \pi_q^*(\calL_u)(- \pi_q^*(l_d) - (a+1) E_{q}))_{reg} \neq \emptyset$ means in particular that $H^0(Z'_u - l_d,  \calL_u(- l_d))_{reg} \neq \emptyset$.

Notice that if $a < -1$, then $E_q \in |\pi_q^*(Z'_u) - \pi_q^*(l_d) - a E_{q}|$ and $(E_q, c_1(\pi_q^*(\calL_u)(- \pi_q^*(l_d) - (a+1) E_{q})) ) < 0$, which contradicts 
the condition $H^0(\pi_q^*(Z'_u) - \pi_q^*(l_d) - a E_{q}, \pi_q^*(\calL_u)(- \pi_q^*(l_d) - (a+1) E_{q}))_{reg} \neq \emptyset$.

It means that $a \geq -1$, assume first that $a = -1$, in this case we have $l_d \geq E_v$ and we should prove:

\begin{equation*}
\chi(-l'_u)  + 1 - h^1(Z'_{s}, \calL_s) < \chi(-l'_u + l_d) - h^1((Z'_{u, q} - \pi_q^*(l_d) +  E_{q})_{s, q},  \pi_q^*(\calL_s)(- \pi_q^*(l_d))).
\end{equation*}

\begin{equation*}
\chi(-l'_u)  + 1 - h^1(Z'_{s}, \calL_s) < \chi(-l'_u + l_d) - h^1((Z'_{u} - l_d)_s, \calL_s(- l_d)).
\end{equation*}

We have the following exact sequence:

\begin{equation*}
0 \to H^0(Z'_u - E_v, \calL_u(- E_v)) \to H^0(Z'_u , \calL_u) \to H^0(E_v , \calL_u).
\end{equation*}

It means that by the condition of the lemma we get immediately that $\dim\left( \frac{H^0(Z'_u , \calL_u)}{H^0(Z'_u - E_v, \calL_u(- E_v))} \right) > 1$.

This means that $\dim\left( \frac{H^0(Z'_u , \calL_u)}{H^0(Z'_u -  l_d, \calL_u (-l_d))} \right) > 1$, from which it follows indeed that $\chi(-l'_u)  + 1 - h^1(Z'_{s}, \calL_s) < \chi(-l'_u + l_d) - h^1((Z'_{u} - l_d)_s, \calL_s(- l_d))$ as in part 1).

We can assume in the following, that $a \geq 0$, if $l _d>0$ we know that:

\begin{equation*}
\chi(-l_u)  + 1 - h^1(Z'_{s}, \calL_s) \leq \chi(-l'_u + l_d)  - h^1((Z'_u - l_d)_s, \calL_s(- l_d)).
\end{equation*}

It means that we only have to prove:

\begin{equation*}
\frac{(a+1)(a+2)}{2} - h^1((Z'_{s, q} - \pi_q^*(l_d) - a E_{q})_{s, q}, \pi_q^*(\calL_s)(- \pi_q^*(l_d) - (a+ 1)E_{q}))  > - h^1((Z'_u - l_d)_s, \calL_s(- l_d)).
\end{equation*}

However it immediately follows from $H^0((Z'_{s, q} - \pi_q^*(l_d) - a E_{q})_{s, q}, \pi_q^*(\calL_s)(- \pi_q^*(l_d) - (a+ 1)E_{q}))  \subset H^0((Z'_u - l_d)_s, \calL_s(- l_d))$ and
$H^0((Z'_{s, q} - \pi_q^*(l_d) - a E_{q})_{s, q}, \pi_q^*(\calL_s)(- \pi_q^*(l_d) - (a+ 1)E_{q}))  \neq H^0((Z'_u - l_d)_s, \calL_s(- l_d))$, since the line bundle $  \calL_s  \otimes \calO_{(Z'_u - l_d)_s}(-l_d)$ hasn't got a base point at $p$, because $p$ is a generic point on $E_v$.

If $l_d = 0$, then we have $a > 0$ and we have to prove:

\begin{equation*}
1 - h^1(Z'_{s, q}, \pi_q^*(\calL_s)(- E_{q})) <  \frac{(a+1)(a+2)}{2} - h^1(Z'_{s, q} - aE_q , \pi_q^*(\calL_s) - (a+ 1)E_{q}) .
\end{equation*}

For this we only have to prove $ H^0(Z'_{s, q}, \pi_q^*(\calL_s)(- E_{q}))  \neq  H^0(Z'_{s, q} - aE_q , \pi_q^*(\calL_s) - (a+ 1)E_{q})$, but this immediately follows from the fact that $\calL_s$ has not got base point on $E_v$, so the generic section $s \in H^0(Z'_s, \calL_s)$ has got $(l_u, E_v)$ disjoint arrows on $E_v$, which means indeed that if $q$ is a generic point, then $H^0(Z'_{s, q}, \pi_q^*(\calL_s)(- E_{q}))_{reg} \neq \emptyset$.

\end{proof}

Let's finish the proof of our main lemma with the help of the lemma before.

We have to prove that a generic line bundle $\calL_u  \in r_s^{-1}(\calL_s )$ hasn't got a base point on the exceptional divisor $E_v$.

Let's look at the space $\eca^{l'_u, \calL_s}(Z'_u) \subset \eca^{l'_u}(Z'_u)$ consisting of divisors $D \in \eca^{l'_u}(Z'_u)$, such that $\calO_{Z'_u}(D) | Z'_s = \calL_s$.

Now assume to the contrary that a generic line bundle $\calL_u \in  r_s^{-1}(\calL_s )$ has got a base point on $E_v$, this means that there is open subset $U \in \eca^{l'_u, \calL_s}(Z'_u) $ and a map $ f: U \to E_v$, such that $f(D) \in |D|$ is a base point of the line bundle $\calO_{\tX_u}(D)$ on the regular part of $E_v$.

Now let's have a generic divisor $D \in U$ which has got disjoint arrows on the regular part of $E_v$ and let's blow up $E_v$ at the generic point $f(D) = q$, let the new exceptional divisor be $E_{q}$.

Let's denote the new singularity by $ \tX_{q}$ and the line bundle $\calL_q = \pi_q^*(\calL_u) - E_{q}$, and the cycle $Z_q = \pi^*(Z'_u) - E_{q}$.

We know that $\eca^{l'_u, \calL_s}(Z'_u) \cap \eca^{\pi_q^*(l'_u) -E_{q}}(Z_q) =  \eca^{  \pi_q^*(l'_u) -E_{q}, \calL_s}(Z_q)$  if $t= 0$ and
$\eca^{l'_u, \calL_s}(Z'_u) \cap \eca^{\pi_q^*(l'_u) -E_{q}}(Z_q) = \eca^{\pi_q^*(l'_u) -E_{q}, \calL_{s, q}}(Z_q)$ if $t > 0$.

We can assume that $D$ is so generic such that the intersection $ \eca^{l'_u, \calL_s}(Z'_u) \cap \eca^{\pi_q^*(l'_u) -E_{q}}(Z_q)$ is transversal in $D$.

Assume first that $t = 0$:

If we look at a small open neighborhood $U_1 \subset \eca^{  \pi_q^*(l'_u) -E_{q}, \calL_s}(Z_q)$, then we get that if $D' \in U_1$, then $f(D') = f(D)$.

This means that $f(D) = f(D')$ is a base point of the line bundle $\calO_{\tX_u}(D')$, which means $h^1(\calO_{Z_q}( D')) = h^1(\calO_{Z'_u}(D')) + 1 = h^1(Z'_s, \calL_s) + 1$.

It means that the map $ \eca^{  \pi_q^*(l'_u) -E_{q}, \calL_s}(Z_q) \to r^{-1}(\calL_s) \subset \pic^{\pi_q^*(l'_u) -E_{q}}(Z_q)$ cannot be a submersion at any of the points $D' \in U_1$, because otherwise we would have $h^1(\calO_{Z_q}(D')) =  h^1(Z_s, \calL_s)$.

Indeed if the map $\eca^{  \pi_q^*(l'_u) -E_{q}, \calL_s}(Z_q) \to r^{-1}(\calL_s)$ were a submersion at a point $D' \in U_1$, then $T_{\calO_{Z_q}(D')}(  r^{-1}(\calL_s)) \subset  \im(T_{D'}(c^{\pi_q^*(l'_u) -E_{q}}(Z_q))$, which means that $h^1(\calO_{Z_q}(D')) = h^1(\calO_{Z_q}) - \dim(\im(T_{D'}(c^{\pi_q^*(l'_u) -E_{q}}(Z_q))) =  h^1(\calO_{Z_s}) - \dim(\im(r \circ T_{D'}(c^{\pi_q^*(l'_u) -E_{q}}(Z_q)))$.

On the other hand we know that $\im(r \circ T_{D'}(c^{\pi_q^*(l'_u) -E_{q}}(Z_q)) = \im(T_{D'}(c^{l'_s}(Z_s)))$, which indeed would give that $h^1(\calO_{Z_q}(D')) =  h^1(Z_s, \calL_s)$.

However we know that $\eca^{  \pi_q^*(l'_u) -E_{q}, \calL_s}(Z_q)$ is irreducible from \cite{R} and by our previous lemma we know that the map $\eca^{  \pi_q^*(l'_u) -E_{q}, \calL_s}(Z_q) \to r^{-1}(\calL_s)$ is dominant, which is a contradiction, since $U_1 \subset \eca^{  \pi_q^*(l'_u) -E_{q}, \calL_s}(Z_q)$ is open.

Now assume in the following that $t > 0$, in this case we have $v \in \calv_s$:

If we look at a small open neighborhood $U_1 \subset  \eca^{\pi_q^*(l'_u) -E_{q}, \calL_{s, q}}(Z_q)$ of $D$, then we get that if $D' \in U_1$, then $f(D') = f(D)$.

This means that $f(D) = f(D')$ is a base point of the line bundle $\calO_{\tX_u}(D')$, which means $h^1(Z_q, D') = h^1(Z'_u, D') + 1 = h^1(Z_s, \calL_s) + 1$.

It means that the map $ \eca^{\pi_q^*(l'_u) -E_{q}, \calL_{s, q}}(Z_q)  \to r^{-1}(\calL_{s, q}) \subset \pic^{\pi^*(l'_u) -E_{new}}(Z_b)$ cannot be a submersion in any of the points $D' \in U_1$, because otherwise we would have similarly as in the previous case that $h^1(\calO_{Z_q}( D')) =  h^1(Z'_{s, q}, \calL_{s, q}) =  h^1(Z'_s, \calL_s)$, where the second equality follows from the fact that the line bundle  $\calL_s$ hasn't got a base point on the exceptional divisor $E_v$ at the generic point $q$.

On the other hand by our previous lemma we know that the map $\eca^{\pi_q^*(l'_u) -E_{q}, \calL_{s, q}}(Z_q)  \to r^{-1}(\calL_{s, q}) $ is dominant, and this is a contradiction, since $U_1 \subset \eca^{\pi_q^*(l'_u) -E_{q}, \calL_{s, q}}(Z_q)  $ is open.
\end{proof}

Now we want to prove part 3), so first let's see that $\tau( \overline{\im(c^{l'}(Z))}) \leq \prod_{v\in |-l'|}   {t_v \choose (l', E_v)}$, where $t_v = (K+Z, E_v)$ for an arbitrary vertex $v \in |Z|$.

Let's have a generic differential form $w \in H^0(\calO_Z(K+Z))_{reg}$, where $w$ has got $t_v$ disjoint transvesal arrows $D_{v, i}, 1 \leq i \leq t_v$ along the regular parts of exceptional divisors $E_v$, such that $(l', E_v) > 0$, because the line bundle $\calO_Z(K+Z)$ hasn't got any base points on these exceptional divisors.

Furthermore if $w$ is enough generic, then we have exactly $m = \tau(\im(c^{l'}(Z)))$ distinct smooth point of $\im(c^{l'}(Z))$, $p_1, p_2, \cdots, p_m$ such that $w$ vanishes on the tangent spaces $T_{p_i}(\im(c^{l'}(Z)))$.

Also if $w$ is enough generic, then we can also assume that $p_i$ are also generic points of $\im(c^{l'}(Z))$ in the sense that $\dim(c^{l'}(Z)^{-1}(p_i)) = 0$, the map $c^{l'}(Z) : \eca^{l'}(Z) \to \pic^{l'}(Z)$ is a submersion at the unique point $D_i \in c^{l'}(Z)^{-1}(p_i)$ and the unique divisor $D_i \in c^{l'}(Z)^{-1}(p_i)$ has got $(l', E_v)$ disjoint arrows at the regular part of the divisors $E_v, v \in |Z|$.

Since the map $c^{l'}(Z) : \eca^{l'}(Z) \to \pic^{l'}(Z)$ is a submersion at the unique point $D_i \in c^{l'}(Z)^{-1}(p_i)$, one has $\im(T_D(c^{l'}(Z))) = T_{p_i}(\im(c^{l'}(Z)))$ and the points $p_i$ are regular values of the map $c^{l'}(Z)$.

This means that the differential form $w$ has got no pole on each $D_i \in c^{l'}(Z)^{-1}(p_i)$.

Notice however that the differential form $w$ has got a pole of order $Z_v$ on each exceptional divisor $E_v, v \in |Z|$ and disjoint transvesal arrows $D_{v, i}, 1 \leq i \leq t_v$ along the regular parts of exceptional divisors $E_v, v \in |Z|$.

Let's have the following lemma:

\begin{lemma}\label{cut}
With the setup above if $D_i \in c^{l'}(Z)^{-1}(p_i)$ is the unique divisor, which have got $(l', E_v)$ disjoint arrows at the regular part of the divisors $E_v, v \in |Z|$ and $u \in |Z|$, then $D_i$ has got $(l', E_u)$ arrows $D_{u, a_1}, \cdots, D_{u, a_{(l', E_u)}}$ supported on the exceptional divisor $E_u$, where $1 \leq a_1 < \cdots < a_{(l', E_u)} \leq t_u$.
\end{lemma}

\begin{proof}

Let the divisor $D_i \in |p_i|$ have an arrow at the exceptional divisor $E_u, u \in |Z|$ and let's denote it by $D'$, we have to prove that $D'$ is one of the arrows $D_{u, i}, 1 \leq i \leq t_u$.

We know that the differential form $w$ has got a pole of order $Z_u$ on the exceptional divisor $E_u$ and $w$ hasn't got a pole along $D'$.
It means that $D' \cap E_u = D_{u, i} \cap E_u$ for some $1 \leq i \leq t_u$, if $Z_u = 1$ then this proves the statement.

We claim in the following that $D' = D_{u, i}$:

So assume that $Z_u \geq 2$ and let's blow up $E_u$ at the point $D' \cap E_u$ and let the new exceptional divisor be $E_1$ and let the strict transforms of the divisors $D',  D_{u, i}$ be $D'_1,  D_{u, i, 1}$.

We know that the differential form $w$ has got a pole of order $Z_u - 2$ on $E_1$ and $w$ hasn't got a pole along the exceptional divisor $D'$, which means that $w$ must vanish on $D'_1$, so $D'_1 \cap E_1 = D_{u, i, 1} \cap E_1$ since $Z_u - 2 \geq 0$.

We prove by induction that if $j \leq Z_u - 1$ and we blow up the exceptional divisor $E_u$ sequentially $j$ times along the divisor $D'$ and we denote the strict transforms of $D', D_{u, i}$ by $D'_j, D_{u, i, j}$, and the new exceptional divisors by $E_1, \cdots E_j$, then $D'_j \cap E_j = D_{u, i, j} \cap E_j$.

If we apply this for $j =  Z_u - 1$, then we get $D' = D_{u, i, j}$ and it proves the statement.

Now if $j \leq Z_u - 1$ we know that $w$ has got a pole on $E_j$ of order $Z_u - 2j$, however $w$ hasn't got a pole on the divisor $D'$, which means that $w$ has got a pole on the divisor $D'_j$ of order at most $-j$.
We know, that $j \leq Z_u - 1$, so we have $Z_u - 2j > - j$ and this means, that $w$ should have an arrow at $E_j \cap D'_j$ and we indeed get $E_j \cap D'_j = E_j \cap D_{u, i, j}$ and it proves the statement of the lemma.

\end{proof}

\begin{remark}
Notice that there is a much easier statement in the other direction, namely if $D = \sum_{v \in |Z|, 1 \leq i \leq (l', E_v)} D_{v, a_{v, i}}$, where
$1 \leq a_{v, 1} < \cdots < a_{v, (l', E_v)} \leq t_v$, then the differential form $w$ hasn't got a pole on the divisor $D$.
\end{remark}

We got that if $w$ is enough generic, such that $p_i$ are also generic points of $\im(c^{l'}(Z))$ in the sense that the unique divisor $D_i \in |p_i|$ has got $(l', E_v)$ disjoint arrows at the regular part of the divisors $E_v, v \in |Z|$ and the points $p_i$ are regular values of the map $c^{l'}(Z)$
,then if $D_i \in |p_i|$, then  $D$ has got $(l', E_u)$ arrows $D_{u, a_1}, \cdots, D_{u, a_{(l', E_u)}}$ supported on the exceptional divisor $E_u$, where $1 \leq a_1, \cdots, a_{(l', E_u)} \leq t_u$.

This immediately gives the first part of part 3), so the desired inequality $\tau( \overline{\im(c^{l'}(Z))}) \leq \prod_{v\in |-l'|}   {t_v \choose (l', E_v)}$.

In the following we want to prove the equality part:

We should prove, that if $w_0$ is an enough generic differential form $w_0 \in H^0(\calO_Z(K+Z))_{reg}$ and we choose for each vertex $u \in |Z|$ numbers $1 \leq a_{u, 1} < \cdots < a_{u, (l', E_u)} \leq t_u$, then $D = \sum_{u \in |Z|, 1 \leq i \leq (l', E_u)} D_{u, a_{u, i}} \in \eca^{l'}(Z)$ is a generic divisor
in the sense, that $c^{l'}(Z)(D)$ is a regular value of the map $c^{l'}(Z)$ and the point $c^{l'}(Z)(D)$ is a smooth point of $\im(c^{l'}(Z))$.

Let's have a generic differential form $w_0 \in H^0(\calO_Z(K+Z))_{reg}$ and an anallytically open subset $w_0 \in U \subset H^0(\calO_Z(K+Z))_{reg}$, such that all differential forms in $U$ are generic, 
and we have holomorphic functions $D_{v, i}: U \to \eca^{-E_v^*}(Z) | v \in |Z|, 1 \leq i \leq t_v$, such that $D_{v, i}(w_0) = D_{v, i}$ and for $w \in U$ one has
$|w| = \sum_{v \in |Z|, 1 \leq i \leq t_v} D_{v, i}(w)$.

Now choose for each vertex $u \in |Z|$ numbers $1 \leq a_{u, 1} < \cdots < a_{u, (l', E_u)} \leq t_u$, then we have to prove that the image of the map
 $D =  \sum_{u \in |Z|, 1 \leq i \leq (l', E_u)} D_{u, a_{u, i}} : U \to \eca^{l'}(Z)$ contains an open subset of $\eca^{l'}(Z)$.

Let's have the map $g:  U \to \eca^{l'}(Z) \to \pic^{l'}(Z)$, we are enough to prove that $g(U)$ contains an open subset of $\im(c^{l'}(Z))$ since the map $\eca^{l'}(Z) \to \pic^{l'}(Z)$
is birational.

Let's denote the contact points $d_{u, i}(w) = D_{u, i}(w) \cap E_u$, where $1 \leq i \leq t_u$.
We claim that we are enough to prove that if $w$ is an enough generic differential form in $U$ and we choose for each vertex $u \in |Z|$ numbers $1 \leq a_{u, 1}, \cdots, a_{u, (l', E_u)} \leq t_u$, then $D(w) | E = \sum_{u \in |Z|, 1 \leq i \leq (l', E_u)} d_{u, a_{u, i}}(w) \in \eca^{l'}(E)$ is a generic divisor in $\eca^{l'}(E)$, so
$D(w)| E $ covers an open subset of $\eca^{l'}(E)$, while $w \in U$.

Indeed assume that this holds and let's denote this open set by $U' \subset \eca^{l'}(E)$.

Let's choose generic points $q_{u, i} \in E_u, 1 \leq i \leq (l', E_u)$, such that $\sum_{u \in |Z|, 1 \leq i \leq (l', E_u) } q_{u, i} \in U'$.

Let's blow up the exceptional divisors at these points and let the new divisors be $E_{u, i}, 1 \leq i \leq (l', E_u)$ and let's denote $l'_{new} = - \sum_{u \in \calv, 1 \leq i \leq (l', E_u)} E_{u, i}^*$ and $Z_{new} = \pi^*(Z)$.

We know that there is a generic divisor $D_{new} \in \eca^{l'_{new}}(Z_{new})$, such that the line bundle $\calO_Z(Z_K + Z - \pi_{*}(D_{new}))$ hasn't got base points at the points $q_{u, i}, u \in \calv, 1 \leq i \leq (l', E_u)$, so there is a divisor $D'$, such that $\calO_Z(Z_K + Z) = \calO_Z(\pi_{*}(D_{new}) + D')$ and $|\pi_{*}(D_{new})| \cap |D'| = \emptyset$.

Let's have a differential form $w' \in H^0(\calO_Z(K+Z))_{reg}$, such that $|w'| = \pi_{*}(D_{new}) + D'$.

On the other hand by the fact that if $w$ is an enough generic differential form $w \in U \subset H^0(\calO_Z(K+Z))_{reg}$, then $D(w) | E = \sum_{u \in |Z|, 1 \leq i \leq (l', E_u)} d_{u, a_{u, i}}(w) \in \eca^{l'}(E)$ is a generic divisor in $\eca^{l'}(E)$ we know that if the points $q_{u, i} \in E_u, 1 \leq i \leq (l', E_u)$ are enough generic, such that $\sum_{u \in |Z|, 1 \leq i \leq (l', E_u) } q_{u, i} \in U'$, then there is a generic differential form $w \in H^0(\calO_Z(K+Z))_{reg}$, such that $d_{u, a_{u, i}}(w) = q_{u, i}$.

Now let's have the divisor of the section $w + t w' \in  H^0(\calO_Z(K+Z))_{reg}$, where $t \in (\bC, 0)$ is enough small and let's denote $D''_t = |w + t w'|$.

We see that if $t \in  (\bC, 0)$ is enough small, then $D''_t$ has got transversal arrows at the points $ q_{u, i}$ and $D_t = \sum_{u \in |Z|, 1 \leq i \leq (l', E_u)} D_{u, a_{u, i}, t} $ is a generic divisor in $\eca^{l'}(Z)$.

It means that we are indeed enough to prove that if $w$ is an enough generic differential form $w \in H^0(\calO_Z(K+Z))_{reg}$ and we choose for each vertex $u \in |Z|$ numbers $1 \leq a_{u, 1}, \cdots, a_{u, (l', E_u)} \leq t_u$, then $D(w) | E = \sum_{u \in |Z|, 1 \leq i \leq (l', E_u)} d_{u, a_{u, i}}(w) \in \eca^{l'}(E)$ is a generic divisor in $\eca^{l'}(E)$, so
$D(w) | E$ covers an open subset of $\eca^{l'}(E)$, while $w \in U$.

We will prove this in two steps, first let's denote the subset of vertices $\calv_1 = (u \in |Z| | Z_u = 1)$ and let's denote $l'_1 = \sum_{u \in \calv_1} - (l', E_u) \cdot E_u^* $, we will prove first that if we choose for each vertex $u \in \calv_1$ numbers $1 \leq a_{u, 1}, \cdots, a_{u, (l', E_u)} \leq t_u$, then $D_1(w) | E = \sum_{u \in \calv_1, 1 \leq i \leq (l', E_u)} d_{u, a_{u, i}}(w) \in \eca^{l'_1}(E)$ is a generic divisor in $\eca^{l'_1}(E)$, so $D_1(w) | E$ covers an open subset of $\eca^{l'_1}(E)$, while $w \in U$. 

Indeed we will prove that the image of the map $D_1 =  \sum_{u \in \calv_1, 1 \leq i \leq (l', E_u)} D_{u, a_{u, i}} : U \to \eca^{l'_1}(Z)$ contains an open subset of $\eca^{l'_1}(Z)$.

Let's have the map $g_1:  U \to \eca^{l'_1}(Z) \to \pic^{l'_1}(Z)$, we are enough to prove that $g_1(U)$ contains an open subset of $\im(c^{l'_1}(Z))$ since the map 
$c^{l'_1}(Z) : \eca^{l'_1}(Z) \to \pic^{l'_1}(Z)$ is birational.

Notice that the dimension of the map $c^{l'_1}(Z) : \eca^{l'_1}(Z) \to \im(c^{l'_1}(Z))$ is $\dim(\im(c^{l'_1}(Z))) = (l'_1, Z)$.

Assume to the contrary that $U$ is a small open neighborhood of $w_0$ and $g_1(U)$ is locally an irreducible analytic subvariety of dimension $d < (l'_1, Z)$, let's denote it by $W$.

For a generic point $q \in W$ let's have $r = h^1(Z, q)$, it means that $W \subset W_r(Z, l'_1)$ and for a generic point $q \in W$ we have $q \in W_r(Z, l'_1) \setminus W_{r+1}(Z, l'_1)$.

Here $W_r(Z, l'_1)$ denotes the Brill-Noether strata $W_r(Z, l'_1) = (\calL \in \pic^{l'_1}(Z) | h^1(Z, \calL) \geq r)$.

Notice now that we have the map $g_1 : U \to W$, and for a generic point $q\in W$ the fiber $g_1^{-1}(q)$ contains differential forms in $H^0(\calO_Z( K+Z))_{reg}$ which have got no pole
on some divisor $D \in (c^{l'_1}(Z))^{-1}(q)$.

From this we get that $\dim(g_1^{-1}(q)) \leq \dim( (c^{l'_1}(Z))^{-1}(q)  ) + h^1(Z, q) = 2r - ( 1 - \chi(Z) - (Z, l'_1))$.

We get the inequality $\dim(U) = 1 - \chi(Z) \leq d + 2r - ( 1 - \chi(Z) - (Z, l'_1))$, which gives that $d \geq 2 h^1(\calO_Z) - (Z, l'_1) - 2r$.

Let's have a generic line bundle $q \in W$, where $q = c^{l'_1}(Z) \left(  \sum_{u \in \calv_1 , 1 \leq i \leq (l', E_u)} D_{u, a_{u, i}}(w) \right)$ and $w \in U$ is a generic point.

Let's have the contact points of the disjoint divisors $D_{u, a_{u, i}}(w)$, $d_{u,  a_{u, i}}(w)$ and assume that $q$ has got a $r_{u,  a_{u, i}}$-simple base point at $d_{u,  a_{u, i}}(w)$.

Since $Z_u = 1$ for each vertex $u \in \calv_1$ we have $r_{u,  a_{u, i}} = 1$ if $q$ has got a base point at $d_{u,  a_{u, i}}(w)$, and $r_{u,  a_{u, i}} = 0$ if $q$ hasn't got a base point at $d_{u,  a_{u, i}}(w)$.

Let's blow up $\tX$ $r_{u,  a_{u, i}}$ times along the divisors $D_{u, a_{u, i}}(w) | u \in \calv_1$ , and let's denote the new singularity by $\tX_{new}$ and the new exceptional divisors
by $E_{u, i, new}$ where $u \in \calv_1, 1 \leq i \leq (l'_1, E_u)$ and $r_{u,  a_{u, i}}= 1$.

Let's denote $Z_{new} = \pi^*(Z) - \sum_{u \in  \calv_1, 1 \leq i \leq (l'_1, E_u), r_{u,  a_{u, i}} = 1} E_{u, i, new}$, notice that $Z_{new}$ is the same cycle as $Z$, just on the blown up singularity.

Let's denote $l'_{1, new } = \sum_{u \in \calv_1, 1 \leq i \leq (l'_1, E_u), r_{u,  a_{u, i}} = 0} E_{u}^*$ and $r' = \sum_{u \in \calv_1, 1 \leq i \leq (l'_1, E_u)} r_{u,  a_{u, i}}$.

Now let's look at the line bundle $q^* =  \pi^*(q)  \otimes \calO_{Z_{new}}(- \sum_{u \in \calv_1, 1 \leq i \leq (l'_1, E_u),  r_{u,  a_{u, i}} = 1} E_{u, i, new})$ and notice that $h^1(Z_{new}, q^*) = r + r'$.

On the other hand if $q$ runs over $W$, then the base point locus of the line bundle $q$ moves in a $ r'$-dimensional family.

This means that if $q \in W$ was enough generic, then there is an analytical subvariety $q^* \in W' \subset \pic^{l'_{1, new}}(Z_{new})$, such that for each $\calL \in W'$ we have
$\calL = c^{l'_{1, new}}(Z_{new})(y)$, where $y = \sum_{u \in \calv_1, 1 \leq i \leq (l'_1, E_u), r_{u,  a_{u, i}} = 0} D_{u, a_{u, i}}(w) $ for some $w \in U$, where $w$ has the same base points as $q$  and $y \in \eca^{l'_{new}}(Z_{new})$, and $\dim(W') \geq d - r'$.

Now there are two cases, assume first that $r_{u,  a_{u, i}}= 1$ for all $u \in \calv_1, 1 \leq i \leq (l'_1, E_u)$, this means that $r' = (l'_1, Z)$ and $q^* = \calO_{Z_{new}}$, which means that
$r = h^1(\calO_Z) - (l'_1, Z)$ so we have $d \geq  (Z, l'_1)$, however this contradicts the assumption $d < (l'_1, Z)$.

So we can assume in the following that $q^*$ is not the trivial line bundle and so there are two independent sections $s_1, s_2 \in H^0(Z_{new}, q^*)_{reg}$ such that $|s_1| \cap |s_2| = \emptyset$.

We know that for a generic point $\calL \in W'$ one has $\calL \in W_{r+r'}(Z_{new},l'_{1, new} ) \setminus W_{r+r' +1}(Z_{new},l'_{1, new} ) $ and $q^* \in W_{r+r'}(Z_{new},l'_{1, new} ) \setminus W_{r+r' +1}(Z_{new},l'_{1, new} )$.

This means that $\dim( T_{q^*}( W_{r+r'}(Z_{new},l'_{1, new} ) )) \geq d - r'$.

Now let's recall the following theorem, which is the analouge of the similar theorem in the classical Brill-Noether theory about the Zariski tangent spaces of Brill-Noether Stratas:

\begin{theorem}
Let $\tX$ be an arbitrary resolution of a normal surface singularity and let's have a Chern class $l' \in -S'$ on it and a cycle $Z$, and let's have a line bundle $\calL \in W_{r}(Z, l') \setminus W_{r+1}(Z, l')$.
Let's look at the bilinear map $ \mu : H^0(Z, \calL) \otimes H^0(Z, \calO_Z(K + Z) \otimes \calL^{-1}) \to H^0(\calO_Z(K + Z))$, then we have $T_{\calL}W_{r}(Z, l') = \ker(\im(\mu))$, where we use the identification $H^0(\calO_Z(K + Z)) = H^1(\calO_Z)^*$.
\end{theorem}

Let's use this theorem in our situation in the following:

Let's look at the map $\mu: H^0(Z_{new}, q^*) \otimes H^0(Z_{new}, \calO_{Z_{new}}(K_{new} + Z_{new}) \otimes (q^*)^{-1} ) \to H^0(\calO_{Z_{new}}( K_{new} + Z_{new}))$, we have $T_{q^*}( W_{r+r'}(Z_{new},l'_{1, new} ) ) = \ker(\im(\mu))$.

Notice that on one hand we have $H^0(Z_{new}, q^*)_{reg} \neq \emptyset$ obviously, and on the other hand $H^0(Z_{new}, \calO_{Z_{new}}(K_{new} + Z_{new}) \otimes (q^*)^{-1} )_{reg} \neq \emptyset$.

The latter statement follows from the fact that $\calO_{Z_{new}}( K_{new} + Z_{new}) = \calO_{Z_{new}}(\sum_{u \in \calv_1, 1 \leq i \leq (l'_1, E_u), 1 = r_{u,  a_{u, i}}} E_{u, i, new} + \sum_{u \in |Z|, 1 \leq j \leq t_u | j \neq a_{u, i}} D_{u, j}) $.

Notice also that the line bundle $q^*$ hasn't got any base points on the cycle $Z_{new}$.

From the theorem it follows that we have obviously $\dim( \im(\mu)) \leq h^1(\calO_{Z_{new}}) - d + r'$.
It means that $\dim( \ker (\mu)) \geq l \cdot (r+ r') - (h^1(Z_{new}) - d + r')$, where we have $2 \leq l = h^0(Z_{new} , q^*)$.

We have $h^0(Z_{new}, \calO_{Z_{new}}(K_{new} + Z_{new}) \otimes (q^*)^{-1} )= h^1(Z_{new}, q^*) = r+r'$ and let's have a basis $s_1, s_2, \cdots s_l \in H^0(Z_{new} , q^*)_{reg}$, such that $|s_1| \cap |s_2| = \emptyset$.

For $1 \leq j \leq l$ let's denote $V_j = <s_1, s_2, \cdots, s_j>$ and let's have the map $\mu_j: V_j \otimes H^0(Z_{new}, \calO_{Z_{new}}(K_{new} + Z_{new}) \otimes (q^*)^{-1} ) \to H^0(\calO_{Z_{new}}( K_{new} + Z_{new}))$.

Notice that for $j > 2$ one has $\dim( \ker (\mu_j)) - \dim( \ker (\mu_{j-1})) \leq h^1(Z_{new}, q^*) = r+r'$, this means that we have:

\begin{equation*}
\dim( \ker (\mu_2)) \geq  2 \cdot (r+ r') - (h^1(\calO_{Z_{new}}) - d + r') = d + 2r + r' - h^1(\calO_{Z_{new}}).
\end{equation*}

Now let's have a generic element in $ \ker (\mu_2)$, it is in the form $s_1 \otimes (t_1) - s_2 \otimes (t_2) \in \ker (\mu_2)$, where we have $t_1, t_2  \in H^0(Z_{new}, \calO_{Z_{new}}(K_{new} + Z_{new}) \otimes (q^*)^{-1} )$ and now assume that $t_1 \in H^0(Z_{new}- A , \calO_{ Z_{new} - A}(K_{new} + Z_{new} - A) \otimes (q^*)^{-1} )_{reg}$, where we have $0 \leq A \leq Z_{new}$ , it follows that we have also $t_2 \in  H^0(Z_{new}- A , \calO_{ Z_{new} - A}(K_{new} + Z_{new} - A) \otimes (q^*)^{-1} )_{reg}$.

Now from the base point free pencil trick we know that $\dim( \ker (\mu_2)) =   H^0(Z_{new}- A , \calO_{ Z_{new} - A}(K_{new} + Z_{new} - A) \otimes 2(q^*)^{-1} )$.

Indeed if $s \in H^0(Z_{new}- A , \calO_{ Z_{new} - A}(K_{new} + Z_{new} - A) \otimes 2(q^*)^{-1} ) $, then $\mu_2( s_1 \otimes (s_2 \cdot s) - s_2 \otimes (s_1 \cdot s)) = 0$, so the map $s \to s_1 \otimes (s_2 \cdot s) - s_2 \otimes (s_1 \cdot s)$ gives the identification between $H^0(Z_{new}- A , \calO_{ Z_{new} - A}(K_{new} + Z_{new} - A) \otimes 2(q^*)^{-1} )$ and $\ker (\mu_2)$.

It means that we have $\dim( \ker (\mu_2)) = h^1(Z_{new}- A , 2q^*)$, so we get:

\begin{equation*}
h^1(Z_{new}- A , 2q^*)  \geq  d + 2r + r' - h^1(Z_{new}).
\end{equation*}

Now notice that we have $H^0(Z_{new}- A , 2q^*)_{reg} \neq \emptyset$ and $H^0(Z_{new}- A , \calO_{ Z_{new} - A}(K_{new} + Z_{new} - A) \otimes 2(q^*)^{-1} )_{reg} \neq \emptyset$.

From this we get $H^0(Z_{new}- A , K_{new} + Z_{new}- A)_{reg} \neq \emptyset$, and $Z_{new}- A$ is a cycle on a generic singularity, so we have $h^1(\calO_{Z_{new}- A}) = 1- \chi(Z_{new}- A)$, or in other words $h^0(\calO_{Z_{new}- A}) = 1$.

Now we have the map $(c^{2l'_1}(Z_{new}- A))^{-1}(\calO_{ Z_{new}- A}(2q^*)) \oplus (c^{-Z_{K_{new}} + Z_{new}- 2l'_1}(Z_{new}- A))^{-1}(\calO_{ Z_{new} - A}(K_{new} + Z_{new} - A) \otimes 2(q^*)^{-1}) \to c^{-Z_{K_{new}} + Z_{new}}(Z_{new}- A))^{-1}(\calO_{ Z_{new}- A}( K_{new} + Z_{new}- A ))$, which is birational to its image, so this gives that:

\begin{equation*}
h^0(Z_{new}- A , 2q^*) + h^0(Z_{new}- A , \calO_{ Z_{new} - A}(K_{new} + Z_{new} - A) \otimes 2(q^*)^{-1}) - 1 \leq  h^0(Z_{new}- A , \calO_{Z_{new}- A}( K_{new} + Z_{new}- A))
\end{equation*}

\begin{equation*}
h^0(Z_{new}- A , 2q^*) + h^0(Z_{new}- A , \calO_{ Z_{new} - A}(K_{new} + Z_{new} - A) \otimes 2(q^*)^{-1}) - 1 \leq 1- \chi(Z_{new}- A).
\end{equation*}

On the other hand we have $h^0(Z_{new}- A , 2q^*) - h^0(Z_{new}- A , \calO_{ Z_{new} - A}(K_{new} + Z_{new} - A) \otimes 2(q^*)^{-1}) = \chi(Z_{new}- A) + (2q^*, Z_{new}- A)$.

From these two inequalities we get instantly that:

\begin{equation*}
h^0(Z_{new}- A , \calO_{ Z_{new} - A}(K_{new} + Z_{new} - A) \otimes 2(q^*)^{-1}) \leq  1-\chi(Z_{new}- A) - (q^*, Z_{new}- A).
\end{equation*}

Notice that from the definition of $Z$ we have $-\chi(Z_{new}- A) - (q^*, Z_{new}- A) < -\chi(Z_{new}) - (q^*, Z_{new})$ if $0 < A \leq Z_{new}$.

Indeed let's denote $A = A' + A''$, where $|A'| \subset |Z|$ and $|A''| \cap |Z| = \emptyset$.

If $A' = 0$, then $ (q^*, Z_{new}- A)  =  (q^*, Z_{new})$, so we have to prove $\chi(Z_{new}- A)  > \chi(Z_{new})$, however this is immediate from $H^0(\calO_{Z_{new}}( K_{new} + Z_{new}))_{reg} \neq \emptyset$.

Assume on the other hand that $A' > 0$ and $-\chi(Z_{new}- A) - (q^*, Z_{new}- A) \geq -\chi(Z_{new}) - (q^*, Z_{new})$, in this case we would have $\dim(\im(c^{c_1(q^*)}(Z_{new}))) = \dim(\im(c^{c_1(q^*)}(Z_{new} - A))) + h^1(\calO_{Z_{new}}) -  h^1(\calO_{Z_{new} - A})$ and we woud easily get that $\dim(c^{l'}(Z)) = \dim(c^{l'}(Z - A')) + h^1(\calO_Z) - h^1(\calO_{Z-A'})$ but this is impossible because of the minimality of $Z$ in its definiton.

Now assume in the following first that $0 < A$, in this case we have:

\begin{equation*}
\dim( \ker (\mu_2)) = h^0(Z_{new}- A , \calO_{ Z_{new} - A}(K_{new} + Z_{new} - A) \otimes 2(q^*)^{-1}) <  1-\chi(Z_{new}) - (q^*, Z_{new}).
\end{equation*}

\begin{equation*}
1-\chi(Z_{new}) - (q^*, Z_{new}) > d + 2r + r' - h^1(\calO_{Z_{new}}).
\end{equation*}

\begin{equation*}
1-\chi(Z_{new}) - (l'_1, Z_{new}) + r' > d + 2r + r' - h^1(\calO_{Z_{new}}).
\end{equation*}

\begin{equation*}
d < 2 h^1(\calO_Z) - (Z, l'_1) - 2r.
\end{equation*}

This is a contradiction, so we can assume in the following, that $A = 0$.

In this case we get that $d \leq 2 h^1(\calO_Z) - (Z, l'_1) - 2r$ and so we must have have equality $d = 2 h^1(\calO_Z) - (Z, l'_1) - 2r$ and this can only happen if $h^1(Z_{new} , 2q^*) =  1-\chi(Z_{new}) - (q^*, Z_{new})$, and furthermore the map $(c^{2l'_1}(Z_{new}))^{-1}(\calO_{ Z_{new}}(2q^*)) \oplus (c^{-Z_{K_{new}} + Z_{new}- 2l'_1}(Z_{new}))^{-1}(\calO_{ Z_{new}}(K_{new} + Z_{new}) \otimes 2(q^*)^{-1}) \to c^{-Z_{K_{new}} + Z_{new}}(Z_{new}))^{-1}(\calO_{ Z_{new}}( K_{new} + Z_{new}))$ has to be dominant.

This means that if we have a generic differential form $w \in U$, where $|w| = \sum_{u \in |Z|, 1 \leq i \leq t_u} D_{u, i}(w)$, where the divisors $  D_{u, i}(w)$ are disjoint,
then there are integers $1 \leq b_{u, 1} < \cdots < b_{u, (2c_1(q^*), E_u)} \leq t_u$, such that $2q^* =  \calO_Z( \sum_{u \in |Z|, 1 \leq j \leq  (2q^*, E_u)} D_{u, b_{u, j}}(w))$.

Notice that this means that there are only finitely many possible values for the line bundle $2q^*$ and so for the line bundle $q^*$ too.
This means that if $w \in U$ general the line bundle $q^*(w) = q^*$ is constant.

Now let's denote $(q^*, E_u) = q_u$, we know that $q_u = 0$ if $Z_u \neq 1$ and $q_u \leq (l', E_u)$ for every vertex $u \in |Z|$ and furthermore for every vertex $u \in |Z|$
there are integers $1 \leq b_{u, 1} < \cdots < b_{u, q_u} \leq t_u$, such that $q^*(w) =  \calO_Z(\sum_{u \in |Z|, 1 \leq j \leq q_u} D_{u, b_{u, j}}(w))$.

Let's have the subsets $M \subset ( (u, i) | Z_u = 1, 1 \leq i \leq t_u)$, such that if $M_u = ( (u, i) | (u, i) \in M )$, then $|M_u| \leq (l', E_u)$ and 
$\calO_Z(\sum_{u, i | (u, i) \in M} D_{u, i}(w))$ is a constant line bundle if $w \in U$ and let's denote the family of this subsets by $F$.

Let's look at the minimal elements of the family $F$ according to containment.

We claim first that if $M_1$ and $M_2$ are minimal elements of $F$, then one has $M_1 = M_2$ or $M_1 \cap M_2 = \emptyset$.

Indeed assume in the following that $M_1 \neq M_2$ but $M_1 \cap M_2 \neq \emptyset$ and assume that $ M_1 \setminus M_2 \neq \emptyset$ too
and let's denote $M' = M_1 \cap M_2$.

We know that $q_{M_1} = \calO_Z(\sum_{u, i | (u, i) \in M_1} D_{u, i}(w))$ and $q_{M_2} = \calO_Z(\sum_{u, i | (u, i) \in M_2} D_{u, i}(w))$ are constant line bundles if $x \in U$ and we know
that the maps $(c^{c^1(q_{M_j})}(Z))^{-1}(q_{M_j}) \oplus (c^{-Z_{K} + Z_{new}- c^1(q_{M_j})}(Z))^{-1}(\calO_{ Z}( K + Z) \otimes  q_{M_j}^{-1}) \to (c^{-Z_{K} + Z}(Z))^{-1}(\calO_{ Z}( K + Z ))$ has to be dominant if $j = 1, 2$.

Now assume that the line bundle $q_{M'}(w) = \calO_Z(\sum_{u, i | (u, i) \in M'} D_{u, i}(w))$ is not constant if we move in $c^{-Z_{K} + Z}(Z))^{-1}(\calO_{ Z}( K + Z ))$, this means that 
the line bundle $q_{M'}(w) = \calO_Z(\sum_{u, i | (u, i) \in M'} D_{u, i}(w))$ is not constant if we move in $(c^{c^1(q_{M_1})}(Z))^{-1}(\calO_{ Z}(q_{M_1}))$.

If we fix a divisor $D \in (c^{-Z_{K} + Z_{new}- c^1(q_{M_1})}(Z))^{-1}(\calO_{ Z}( K + Z) \otimes  q_{M_j}^{-1})$ and move in $(c^{c^1(q_{M_1})}(Z))^{-1}(q_{M_1})$ such that the line bundle $q_{M'}(t) = \calO_Z(\sum_{u, i | (u, i) \in M'} D_{u, i}(t))$ changes, then we get that $ \calO_Z(\sum_{u, i | (u, i) \in M_1} D_{u, i}(t) + D) \in (c^{-Z_{K} + Z}(Z))^{-1}(\calO_{ Z}( K + Z ))$ but the line bundle $ \calO_Z(\sum_{u, i | (u, i) \in M_2} D_{u, i}(t))$ changes, which is a contradiction.

This arguement shows that $M' \in F$ too, however this contradicts to the minimality of $M_1$ so we have proved our claim.

We know that for $w \in U$ we have $q^*(w) = \calO_Z( \sum_{u \in |Z|, 1 \leq j \leq q_u} D_{u, b_{u, j}}(w))$, which means that the set $Q = ((u, b_{u, j}) | 1 \leq j \leq q_u)$ is a subset in the family 
$F$, so let's have a minimal element $M \in F$, such that $M \subset Q$.

Notice that because the line bundle $\calO_{ Z}( K + Z )$ hasn't got a base point, if we move around in $ c^{-Z_{K} + Z}(Z))^{-1}(\calO_{ Z}( K + Z ))$ the monodromy is
$1$-transitive on the set $D_{u, i} , 1 \leq i \leq t_u$ on every vertex $u \in |Z|$. 

We claim that for each vertex $u\in |Z|$, $M$ can't contain all the elements $(u, i), 1 \leq i \leq t_u$.

Indeed suppose that we have a vertex $u' \in |Z|$ and $M$ contains all the elements $(u', i), 1 \leq i \leq t_{u'}$.

Now there are two cases, assume first that there exists another vertex $u'' \in |Z|$, such that $M$ doesn't contain all the elements $(u'', i), 1 \leq i \leq t_{u''}$, but $M_{u''} \neq \emptyset$.

Let's have an element $(u'', i_1) \in M_{u''}$ and another element $(u'', i_2) \notin M_{u''}$, we know that the monodromy is $1$-transitive on the set $D_{u'', i} , 1 \leq i \leq t_{u''}$, so 
we get that there is a minimal element $M' \in F$ such that $(u'', i_2) \in M'_{u''}$ and $M'$ contains all the elements $(u', i), 1 \leq i \leq t_{u'}$.

However this is a contradiction because $M \neq M'$ are minimal elements in $F$, but $M \cap M' \neq \emptyset$.

In the other case assume that $q_{M}(w) = \calO_Z(\sum_{u \in Y, 1 \leq i \leq t_u} D_{u, i}(w))$, where $Y \subset |Z|$, however in this case $t_u = (l', E_u)$ for each vertex $u \in Y$,
since $M \subset Q$ and the line bundle $q_{M}(w)$ is constant if $w \in U$.

Notice on the other hand that there are inidices $1 \leq r_{u, 1}, \cdots, r_{u, (l', E_u)} \leq t_u$ for every vertex $u \in |Z|$ such that the line bundle $\calO_Z(\sum_{u \in |Z|, 1 \leq i \leq (l', E_u)}
D_{u, r_{u, i}}(w))$ covers an open set in $\im(c^{l'}(Z))$, which has got dimension $(l', Z)$.

On the other hand the line bundle $q_{M}(w) = \sum_{u \in Y, 1 \leq i \leq (l', E_u)} D_{u, i}(w)$ is constant and the line bundle  $\calO_Z(\sum_{u \in |Z| \setminus Y, 1 \leq i \leq (l', E_u)} D_{u, r_{u, i}}(w))$ covers a set of dimension at most $(l', Z_{|Z| \setminus Y}) < (l', Z)$ which is a contradiction.

So we have proved our claim that for each vertex $u\in |Z|$ $M$ can't contain all the elements $(u, i), 1 \leq i \leq t_u$.

Next we claim that for each vertex $u\in |Z|$ one has $|M_u| \leq 1$, indeed assume to the contrary that $u' \in |Z|$ and $|M_{u'}| \geq 2$, we already know however that $|M_{u'}| < t_{u'}$.

So we can assume now that $(u', 1), (u', 2) \in M_{u'}$ and $ (u', 3) \notin M_{u'}$.

Let's blow up now $E_{u'}$ in a generic point $p$, let's denote the new singularity by $\tX_{new}$, the cycle $Z_{new} = \pi^*(Z) - E_{new}$ and let's look at the line bundle 
$\calO_{Z_{new}}(Z_{new} + K_{new} - E_{new})$.

We know that $(l', E_{u'}) \geq 2$, so it follows from the minimality of the cycle $Z$ that $e_Z(u') \geq 3$.

It means by lemma\ref{hyper2} that the line bundle $\calO_{Z_{new}}(Z_{new} + K_{new} - E_{new})$ hasn't got a base point on the exceptional divisor $E_{u'}$.

It means that if we move around in $ (c^{-Z_{K} + Z}(Z))^{-1}(\calO_{ Z}( K + Z ))$, then the monodromy is $2$-transitive on the set $D_{u', i} , 1 \leq i \leq t_{u'}$.

We give the sketch of this easy arguement:

Let's have two indices $1 \leq i_1, i_2 \leq t_{u'}$, we know that if $w \in H^0(\calO_{Z}(K + Z))$ is generic then the contact points $d_{u', 1}(w) , d_{u', 2}(w)$
are two generic points in $E_{u'}$ and also $d_{u', i_1}(w) , d_{u', i_2}(w)$ are two generic points in $E_{u'}$.
It means that we can have two generic points $p, q \in E_{u'}$ and two generic diferential forms $w_1, w_2 \in H^0(\calO_{Z}(K + Z))_{reg}$ such that $d_{u', 1}(w_1) = p, d_{u', 2}(w_1) = q$ and $d_{u', i_1}(w_2) = p, d_{u', i_2}(w_2) = q$. 
The family of differential forms $t \cdot w_1 + (1-t)w_2, t \in \bC$ gives the desired two-transitivity of the monodromy.

However this means that there is a minimal element $M' \in F$ such that $(u', 1), (u', 3) \in M'_{u'}$, but this is a contradiction because of $M' \cap M \neq \emptyset$ and $M' \neq M$.

Thus we have concluded that for each vertex $u\in |Z|$ one has $|M_u| \leq 1$.

Because the monodromy is $1$ transitive we obviously know that there are at least two vertices $u \in |Z|$, such that $|M_u| = 1$, let's denote two such vertices by $u', u''$.
Let's denote the subset of $|Z|$ consisting of vertices $u \in |Z|$, such that $|M_u| = 1$ by $G$, we can assume that $(u', 1), (u'', 1) \in M$.

Now let's blow up the singularity at $d_{u', 1}(w)$ for some generic $w \in U$,  let's denote the new singularity by $\tX_{new}$, the cycle $Z_{new} = \pi^*(Z) - E_{new}$ and let's look at the line bundle $\calO_{Z_{new}}(Z_{new} + K_{new} - E_{new})$, we claim that it has got a base point at $d_{u'', 1}(w)$.

Indeed assume that it hasn't got a base point at $d_{u'', 1}(w)$, then we can find two generic points $p \in E_{u'}, q \in E_{u''}$ and $w \in U$, such that $d_{u', 1}(w) = p, d_{u'', 1}(w) = q$.

On the other hand if $p \in E_{u'}, q \in E_{u''}$ are generic, then we can find $w' \in H^0(\calO_Z(Z + K))$ differential form, such that $p, q \in |w'|$ and there are 
integers $1 \leq j_u \leq t_u, u \in G$, such that $d_{u', j_{u'}}(w') = p, d_{u'', j_{u''}}(w')= q$ and $ \calO_Z(\sum_{u \in G} D_{u, j_{u}}(w')) \neq q_M$.

Now let's have the differential form $t(w') + (1-t) w \in U, t \in \bC$, we get that $q_M(t \cdot w + (1-t) \cdot  w') \neq q_M(w)$ if $t \in \bC$ is enough close to $0$, which is a contradiciton.

Notice that we got that $d_{u'', 1}$ is a base point of the line bundle $\calO_Z(Z + K - d_{u', 1})$ which means, that $h^0(\calO_Z(Z + K - d_{u', 1}))) = h^0(\calO_Z(Z + K - d_{u', 1} - d_{u'', 1})))$ and we get that $h^0(\calO_Z(d_{u', 1} + d_{u'', 1})) = 2$.

Notice however that $(l', E_u') \geq 1$ and $(l', E_u) \geq 1$ so by the minimality of the cycle $Z$ we get that $e_Z(u', u'') \geq 3$, however this contradicts lemma\ref{hyper1}.

 This contradiction proves our claim that if we denote the subset of vertices $\calv_1 = (u \in |Z| | Z_u = 1)$ and $l'_1 = \sum_{u \in \calv_1} -(l', E_u) \cdot E_u^* $, then if we choose
for each vertex $u \in \calv_1$ numbers $1 \leq a_{u, 1}, \cdots, a_{u, (l', E_u)} \leq t_u$, then $D_1(w) | E = \sum_{u \in \calv_1, 1 \leq i \leq (l', E_u)} d_{u, a_{u, i}}(w)  \in \eca^{l'_1}(E)$ is a generic divisor in $\eca^{l'_1}(E)$, so $D_1(w)$ covers an open subset of $\eca^{l'_1}(E)$, while $w \in U$. 

On the other hand we should prove that if $w$ is an enough generic differential form $w \in H^0(\calO_Z(K+Z))_{reg}$ and we choose for each vertex $u \in |Z|$ numbers $1 \leq a_{u, 1}, \cdots, a_{u, (l', E_u)} \leq t_u$, then $D(w) | E = \sum_{u \in |Z|, 1 \leq i \leq (l', E_u)} d_{u, a_{u, i}}(w) \in \eca^{l'}(E)$ is a generic divisor in $\eca^{l'}(E)$, so $D(w) | E$ covers an open subset of $\eca^{l'}(E)$, while $w \in U$.

However the second statement follows trivially from the first statement using our lemma\ref{ind} many times.

Indeed let's have the pairs $(u, a_{u, i}), u \notin \calv_1$ and let's order them in an arbitrary order, so let's have $r = |((u, a_{u, i}) | u \notin \calv_1)|$ and for
$1 \leq j \leq r$ let's denote the $j$-th pair by $(u_j, a_j)$ and let's denote $l'_j = -\sum_{1 \leq i \leq j} E_{u_i}^*$

We should prove inductively on the paramater $j$, that $D_{j}(w) | E = \sum_{u \in \calv_1, 1 \leq i \leq (l', E_u)} d_{u, a_{u, i}}(w) + \sum_{1 \leq i \leq j} d_{u_i, a_i}(w) \in \eca^{l'_{j}}(E)$
is a generic divisor in $\eca^{l'_{j}}(E)$, so $D_{j}(w) | E$ covers an open subset of $\eca^{l'_{j}}(E)$, while $w \in U$.
We know the statement for $j = 0$, so let's see the induction step.

We know from the induction hypothesis that $D_{j-1}(w) | E = \sum_{u \in \calv_1, 1 \leq i \leq (l', E_u)} d_{u, a_{u, i}}(w) + \sum_{1 \leq i \leq j-1} d_{u_i, a_i}(w)$
is a generic divisor in $\eca^{l'_{j-1}}(E)$, and the induction hypothesis follows trivially from lemma\ref{ind}.

Indeed for each index $1 \leq i \leq j-1$ let's blow $E_{u_i}$ in a generic point $q_i \in E_{u_i}$ and let the new divisors be $E_{1}, \cdots, E_{j-1}$ and let's denote
$l = \sum_{1 \leq i \leq j-1} E_i$ and $Z_{new} = \pi^*(Z) - l$.

Notice that if $q_j \in E_{u_j}$ is a generic point then there exists a section $s \in H^0(\calO_Z(K+Z))$ which goes through $q_1, \cdots, q_j$ which means that $H^0(\calO_{Z_{new}}(K_{new} + Z_{new} - l))_{reg} \neq \emptyset$ and the dimension of the image of the map $H^0(\calO_{Z_{new}}( K_{new} + Z_{new} - l)) \to H^0(\calO_{E_{u_j}}(K_{new} + Z_{new} - l))$ is bigger than $1$.

By lemma\ref{ind} this means that the line bundle $\calO_{Z_{new}}(K_{new} + Z_{new} - l)$ hasn't got a base point on the regular part of $E_{u_j}$.
This indeed yields that $D_{j}(w) | E = \sum_{u \in \calv_1, 1 \leq i \leq (l', E_u)} d_{u, a_{u, i}}(w) + \sum_{1 \leq i \leq j} d_{u_i, a_i}(w) $
is a generic divisor in $\eca^{l'_{j}}(E)$ and we are done.

\end{proof}

Let's consider in the following the case of an arbitrary normal surface singularity $\tX$ with resolution graph $\mathcal{T}$.
In this case the line bundles $\calO_Z(K+Z)$ may have several unexpected base points, however we can prove easily the upper bound part analouge of the previous theorem:

\begin{theorem*}
Let $\mathcal{T}$ be an arbitrary resolution graph and $\tX$ a singularity corresponding to it.
Let's have a Chern class $l' \in -S'$ and an integer effective cycle $Z \geq E$, such that $Z = C_{min}(Z, l')$, in particular we know that the Abel map $ \eca^{l'}(Z) \to \im(c^{l'}(Z))$ is birational.

For an arbitrary vertex $v \in \calv $ let's denote $t_v = (- Z_K+Z, E_v)$, with this notation we have got $\tau( \overline{\im(c^{l'}(Z))}) < \prod_{v\in |l'|_{*}}   {t_v \choose (l', E_v)}$.
\end{theorem*}
\begin{remark}
In the general case it can happen that $\tau( \overline{\im(c^{l'}(Z))}) = 0$, so the dual projective variety of the projective clousure $\overline{\im(c^{l'}(Z))}$ has got dimension less 
than $h^1(\calO_Z)-1$.
\end{remark}

\begin{proof}

Let's denote in the following $\tau = \tau( \overline{\im(c^{l'}(Z))})$.

Let's have a generic differential form $w \in H^0(\calO_Z(K + Z)) = H^1(\calO_Z)^*$, and suppose that $p_1, \cdots, p_{\tau} \in \im(c^{l'}(Z))$ are different generic smooth points of $\im(c^{l'}(Z))$ such that $w$ vanishes on $T_{p_i}(\im(c^{l'}(Z)))$.

Let's denote the set of base points of the line bundle $\calO_Z(K + Z)$ by $B$, for a generic section $w \in H^0(\calO_Z(K + Z))_{reg}$ we can write 
$|w| = \sum_{v \in |Z|, 1 \leq i \leq a_v} D_{v, i} + D'$ where $a_v \leq t_v$ and $D_{v, i}$ are disjoint transversal cuts on the exceptional divisor $E_v$, $D_{v, i} \cap B = \emptyset$,
$D' \cap E \subset B$.

We can also assume that $w$ is such generic that the points $p_i \in \im(c^{l'}(Z))$ satisfy $\dim( (c^{l'}(Z))^{-1}(p_i)) = 0$ and if we denote $D_i = (c^{l'}(Z))^{-1}(p_i)$, then
$D_i$ consists of $(l', E)$ disjoint transversal cuts along the smooth part of $E$.

If $w$ is enough generic we can assume that the only $0$-dimensional divisor $D_i \in |p_i|$ satisfies $D_i \cap B = \emptyset$ if $ 1 \leq i \leq \tau$.

We can also assume that the Abel map is submersion in the points $D_i \in \eca^{l'}(Z)$ so $T_{p_i}( \im(c^{l'}(Z)) = \im(T_{D_i}(c^{l'}(Z)))$

We know that the differential form $w$ vanishes on the subspace $\im(T_{D_i}(c^{l'}(Z)))$, so the differential form $w$ hasn't got a pole on the divisors $D_i, 1 \leq i \leq \tau$.

On the other hand by lemma\ref{cut} we know that $D_i = \sum_{v \in |Z|, 1 \leq j \leq (l', E_v)} D_{v, b_j}$ where $1 \leq b_1< \cdots < b_{(l', E_v)} \leq a_v$ are different indices.

This means that $\tau \leq \prod_{v \in |Z|} {a_v \choose (l', E_v)} \leq \prod_{v\in |Z|} {t_v \choose (l', E_v)}$ which proves our theorem.
\end{proof}

\end{document}